\def\dom{\mathop{\mathrm{Dom}}\nolimits}
\def\im{\mathop{\mathrm{Im}}\nolimits}
\def\ker{\mathop{\mathrm{Ker}}\nolimits} 
\def\fix{\mathop{\mathrm{Fix}}\nolimits}
\def\id{\mathrm{id}}
\newtheorem{theorem}{Theorem}
\newtheorem{lemma}[theorem]{Lemma}
\newtheorem{corollary}[theorem]{Corollary}
\newtheorem{proposition}[theorem]{Proposition}
\date{}
\begin{document}

\title{On the monoid of partial order-preserving transformations of a finite chain whose domains and ranges are intervals}

\author{Hayrullah Ay\i k, V\'\i tor H. Fernandes~and Emrah Korkmaz}

\maketitle

\begin{abstract} 
In this paper, we consider the monoid $\mathcal{PIO}_{n}$, of all partial order-preserving transformations on a chain with $n$ elements whose domains and ranges are intervals, along with its submonoid $\mathcal{PIO}_{n}^-$ of order-decreasing transformations. 
Our main aim is to give presentations for $\mathcal{PIO}_{n}^-$ and $\mathcal{PIO}_{n}$. 
Moreover, for both monoids, we describe regular elements and determine their ranks, cardinalities and the numbers of idempotents and nilpotents. 
\end{abstract}

\noindent{\small\it Keywords: \rm Order-preserving, order-decreasing, partial transformations, rank, presentations.}  

\medskip 

\noindent{\small 2020 \it Mathematics subject classification: \rm 20M20, 20M05, 20M10.}

\section{Introduction and preliminaries} 

Let $M$ be a monoid and let $A$ be a subset of $M$. The submonoid generated by $A$, i.e. the smallest submonoid of $M$ containing $A$, 
is denoted by $\langle A\, \rangle$, and if $M=\langle A \rangle$, then $A$ is called a \emph{generating set} of $M$. 
The \emph{rank} of $M$ is the minimum size of a generating set for $M$. An element $s\in M$ is said to be \emph{undecomposable}, if there are no $a,b\in M\setminus \{s\}$ such that $s=ab$. It is clear that every generating set of $M$ contains all undecomposable elements of $M$. Therefore, if a generating set $A$ of $M$ consists of only undecomposable elements of $M$, then $A$ is clearly the unique generating set of minimum size of $M$, 
and so the rank of  $M$ is $\lvert A\rvert$.
For comprehensive background information on semigroups, readers are directed to consult the book by Howie \cite{Howie:1995}.

\smallskip 

Let $n$ be a natural number and let $\Omega_{n}=\{1,\ldots ,n\}$. 
Denote by $\mathcal{PT}_{n}$ the monoid (under composition) of all partial transformations on $\Omega_{n}$ 
and by $\mathcal{T}_{n}$ its submonoid consisting of all full transformations on $\Omega_{n}$. 
Let $\alpha\in \mathcal{PT}_{n}$. We denote the domain and image (range) of $\alpha$ by $\dom(\alpha)$ and $\im(\alpha)$, respectively. 
The set of fixed points of $\alpha$ is denoted by $\fix(\alpha)$, i.e.  $\fix(\alpha) = \{ x\in \dom(\alpha )\mid x\alpha=x\}$, and the kernel of $\alpha$ by 
 $\ker(\alpha)$, i.e. $\ker(\alpha) =\{ (x,y)\in\dom(\alpha)\times\dom(\alpha) \mid x\alpha=y\alpha\}$. 
 The empty and identity transformations of $\mathcal{PT}_{n}$ are denoted by $0_{n}$ and $1_{n}$, respectively. 
 
From now on, let us consider $\Omega_{n}$ as a chain for the natural order. 
A transformation $\alpha \in \mathcal{PT}_{n}$ is called \emph{order-preserving} if $x\leqslant y$ implies $x\alpha\leqslant y\alpha$ for all $x,y \in \dom(\alpha)$. We denote by $\mathcal{PO}_{n}$ the submonoid of $\mathcal{PT}_{n}$ consisting of all order-preserving partial transformations, and by $\mathcal{O}_{n}$ the submonoid of $\mathcal{T}_{n}$ consisting of all order-preserving transformations. 
Semigroups of order-preserving transformations have attracted growing interest for many decades. In 1962 A\u\i zen\v stat \cite{Aizenstat:1962,Aizenstat1:1962} gave a presentation and a description of the congruences of  $\mathcal{O}_{n}$. 
In the same year, Popova \cite{Popova:1962}  gave a presentation for $\mathcal{PO}_{n}$. 
Over an extended period, significant attention has been devoted to studying algebraic and combinatorial properties of $\mathcal{O}_{n}$ and $\mathcal{PO}_{n}$ and many interesting results have emerged (see, for example, \cite{Fernandes:2001,  Fernandes&Santos:2019, Garba:1979,  Gomes&Howie:1992, Howie:1971, Laradji&Umar:2004, Laradji&Umar:2006,  Yang:2000}).  

\smallskip 

A subset $I$ of $\Omega_{n}$ is called an \emph{interval} if for all $x,y,z\in \Omega_{n}$, $x\leqslant y\leqslant z$ and $x,z\in I$ imply $y\in I$. 
Let $\mathcal{PIO}_{n}$ be the subset of $\mathcal{PO}_{n}$ consisting of all partial transformations $\alpha\in \mathcal{PO}_{n}$ such that the domain and image of $\alpha$ are both intervals of $\Omega_{n}$.  It is easy to show that $\mathcal{PIO}_{n}$ is a submonoid of $\mathcal{PO}_{n}$. 
Let us consider also the submonoid $\mathcal{IO}_{n} = \mathcal{PIO}_{n}\cap \mathcal{T}_{n}$ of $\mathcal{PIO}_{n}$ (and of $\mathcal{O}_{n}$). 

\smallskip 

Next, let us consider the submonoids 
$$
\mathcal{PT}_{n}^{-} = \{ \alpha\in \mathcal{PT}_{n} \mid \mbox{$x\alpha\leqslant x$, for all $x\in \dom(\alpha)$}\}
$$
and 
$$
\mathcal{PT}_{n}^{+} = \{ \alpha\in \mathcal{PT}_{n} \mid \mbox{$x\leqslant x\alpha $, for all $x\in \dom(\alpha)$}\}
$$
of all \textit{order-decreasing} and \textit{order-increasing} transformations of $\mathcal{PT}_{n}$, respectively. 
It is well known that $\mathcal{PT}_{n}^{-}$ and $\mathcal{PT}_{n}^{+}$ are isomorphic submonoids of $\mathcal{PT}_{n}$. 
In fact, let us consider the mapping $\phi:\mathcal{PT}_n\rightarrow \mathcal{PT}_n$ which maps each transformation $\alpha\in\mathcal{PT}_{n}$ into the transformation $\widetilde{\alpha}\in \mathcal{PT}_{n}$ defined by $\dom(\widetilde{\alpha}) =\{x\in\Omega_n \mid n+1-x\in\dom(\alpha)\}$ and $x\widetilde{\alpha}=n+1-(n+1-x)\alpha$ for $x\in\dom(\widetilde{\alpha})$. 
It is a routine matter to check that $\phi$ is an automorphism of monoids such that $\phi^2$ is the identity mapping of $\mathcal{PT}_{n}$, 
$\mathcal{PT}_{n}^{-}\phi \subseteq \mathcal{PT}_{n}^{+}$ and $\mathcal{PT}_{n}^{+}\phi \subseteq \mathcal{PT}_{n}^{-}$. 
Moreover, it is easy to check that we also have $\mathcal{PIO}_{n}\phi \subseteq \mathcal{PIO}_{n}$. 
Consequently, 
$$
\mathcal{PT}_{n}^{-}\phi=\mathcal{PT}_{n}^{+},\quad \mathcal{PT}_{n}^{+}\phi=\mathcal{PT}_{n}^{-} 
\quad \text{and} \quad \mathcal{PIO}_{n}\phi=\mathcal{PIO}_{n}.
$$
Now, consider the submonoids $\mathcal{PIO}_{n}^{-} =\mathcal{PIO}_{n} \cap \mathcal{PT}_{n}^{-}$ and $\mathcal{PIO}_{n}^{+}= \mathcal{PIO}_{n} \cap \mathcal{PT}_{n}^{+}$ of $\mathcal{PIO}_{n}$. It follows immediately from the previous properties that 
$$
\mathcal{PIO}_{n}^{-}\phi =\mathcal{PIO}_{n}^{+} \quad \text{and} \quad \mathcal{PIO}_{n}^{+}\phi =\mathcal{PIO}_{n}^{-},
$$
and so $\mathcal{PIO}_{n}^{-}$ and $\mathcal{PIO}_{n}^{+}$ are isomorphic submonoids of $\mathcal{PIO}_{n}$.

In \cite{Fernandes&Paulista:2023}, Fernandes and Paulista studied the monoid $\mathcal{IO}_n$. 
They showed that $\mathcal{IO}_{n}$ coincides with the monoid of all \emph{weak endomorphisms} of a a directed path with $n$ vertices. 
Furthermore, the authors studied the regularity, determined the rank, the cardinality and the number of idempotents of $\mathcal{IO}_{n}$. 
Thereafter, in \cite{Fernandes:2024}, Fernandes gave a presentation for the monoid $\mathcal{IO}_{n}$ and determined the cardinality, the rank and a presentation for the submonoid $\mathcal{IO}_{n}^{-}=\mathcal{IO}_{n}\cap\mathcal{PT}_{n}^-$ of $\mathcal{IO}_{n}$.

In this paper, we investigate the monoid  $\mathcal{PIO}_{n}$ and its submonoid $\mathcal{PIO}_{n}^{-}$, extending the studies aforementioned in the previous paragraph. Our primary contribution is to  provide a presentation for  $\mathcal{PIO}_{n}$. We also describe its regular elements, determine its rank, cardinality, and the numbers of idempotents and nilpotents.  Furthermore, we give a presentation for $\mathcal{PIO}_{n}^{-}$ and determine its rank (including the rank of its nilpotent subsemigroup), cardinality and the numbers of idempotents and nilpotents.

We would  like to mention the use of computational tools, namely GAP \cite{GAP4}. 

\medskip 

We end this section by recalling some notions related to the concept of a monoid presentation. 

For a set $A$, let $A^{*}$ denote the free monoid on $A$ consisting of all finite words over $A$. A \emph{monoid presentation} is an ordered pair $\langle A\mid R\rangle$ where $A$ is an alphabet and $R$ is a subset of $A^{*}\times A^{*}$. Each element $(u,v)$ of $R$ is called a \textit{(defining) relation}, and it is usually written by $u=v$. The empty word is denoted by $1$. For $n$ letters $a_{1}, \ldots, a_{n}$ of $A$, if $w=a_{1}\cdots a_{n}$, then $n$ is called the \emph{length} of $w$, denoted by $\lvert w\rvert =n$. (The length of the empty word is defined to be $0$.) A monoid $M$ is said to be \textit{defined by a presentation} $\langle A\mid R\rangle$ if $M$ is isomorphic to $A^{*}/\sim_{R}$, where $\sim_R$ denotes the congruence on $A^*$ generated by $R$, i.e. $\sim_{R}$ is the smallest congruence on $A^{*}$ containing $R$. Let  $X$ be a generating set of a monoid $M$ and let $\phi : A\rightarrow M$ be an injective mapping such that $A\phi =X$. If $\varphi : A^{*}\rightarrow M$ is the (unique) homomorphism that extends $\phi$ to $A^{*}$, then we say that $X$ \textit{satisfy} a relation $u=v$ of $A^{*}$  if $u\varphi =v\varphi$. For more details, see \cite{Lallement:1979} or \cite{Ruskuc:1995}.

A well-known direct method to obtain a presentation for a monoid is given by the following result; see, for example, \cite[Proposition 1.2.3]{Ruskuc:1995}.

\begin{proposition}\label{provingpresentation}
Let $M$ be a monoid generated by a set $X$ and let $A$ be an alphabet with the same size as $X$. 
Then, $\langle A\mid R\rangle$ is a presentation for $M$ if and only if the following two conditions are satisfied:
\begin{enumerate}
\item The generating set $X$ of $M$ satisfies all the relations from $R$;
\item If $w,w'\in A^*$ are any two words such that the generating set $X$ of $M$ satisfies the relation $w=w'$, then $w\sim_R w'$.
\end{enumerate}
\end{proposition}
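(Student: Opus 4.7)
The plan is to reduce the statement to two set-theoretic containments between congruences on $A^*$. First I would fix the setup: let $\phi:A\to M$ be a bijection with $A\phi=X$ and let $\varphi:A^*\to M$ be its unique extension to a monoid homomorphism. Since $X$ generates $M$, the map $\varphi$ is surjective, so by the first isomorphism theorem $M\cong A^*/\ker\varphi$ (where $\ker\varphi$ denotes the kernel congruence of $\varphi$). The key observation is then that $\langle A\mid R\rangle$ is a presentation for $M$, i.e.\ $M\cong A^*/{\sim_R}$, if and only if $\ker\varphi={\sim_R}$; this uses that the isomorphism must send the class of a letter $a\in A$ to $a\phi$, so the isomorphism is forced to be the one induced by $\varphi$.

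Next I would translate each of the two conditions into one of the two containments making up this equality. Condition (1) says that $R\subseteq\ker\varphi$; since $\sim_R$ is by definition the smallest congruence on $A^*$ containing $R$ and $\ker\varphi$ is a congruence, this is equivalent to $\sim_R\,\subseteq\,\ker\varphi$. Condition (2) is literally the statement that $(w,w')\in\ker\varphi$ implies $w\sim_R w'$, i.e.\ $\ker\varphi\subseteq{\sim_R}$. Putting the two equivalences together gives conditions (1) and (2) $\Longleftrightarrow$ $\ker\varphi={\sim_R}$ $\Longleftrightarrow$ $\langle A\mid R\rangle$ is a presentation for $M$.

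There is no real obstacle: the only point that requires a line of justification is the minimality of $\sim_R$ used to pass from ``$R\subseteq\ker\varphi$'' to ``${\sim_R}\subseteq\ker\varphi$'', which is immediate from the definition of the congruence generated by $R$. For this reason I would write the argument compactly as one chain of ``if and only if'' equivalences rather than splitting it into separate forward and backward implications.
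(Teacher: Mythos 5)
The paper gives no proof of this proposition at all; it is quoted verbatim from Ru\v skuc's thesis. So your proposal has to stand on its own. Your reduction to the two containments ${\sim_R}\subseteq\ker\varphi$ and $\ker\varphi\subseteq{\sim_R}$ is exactly the standard argument, and the direction the paper actually invokes later --- that conditions (1) and (2) imply $\langle A\mid R\rangle$ presents $M$ --- is complete as you wrote it: (1) gives $R\subseteq\ker\varphi$, hence ${\sim_R}\subseteq\ker\varphi$ by minimality of the generated congruence; (2) gives the reverse containment; and then $M\cong A^*/\ker\varphi=A^*/{\sim_R}$ by the first isomorphism theorem, surjectivity of $\varphi$ coming from the fact that $X$ generates $M$.

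The step that does not hold up is your justification of the converse: ``the isomorphism must send the class of a letter $a\in A$ to $a\phi$, so the isomorphism is forced to be the one induced by $\varphi$.'' Nothing forces an abstract isomorphism $A^*/{\sim_R}\to M$ to respect the chosen assignment $\phi$. With the paper's literal definition of ``defined by a presentation'' (mere existence of some isomorphism), the ``only if'' direction is in fact false: take $M=(\mathbb N,+)$, $X=\{1,3\}$, $A=\{a,b\}$ with $a\phi=1$, $b\phi=3$, and $R=\{b=aa\}$. Then every $\sim_R$-class contains a unique power of $a$, so $A^*/{\sim_R}\cong(\mathbb N,+)=M$ and $\langle A\mid R\rangle$ is a presentation for $M$; yet $X$ does not satisfy $b=aa$ (that would say $3=2$), so condition (1) fails. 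The statement is correct only under the standard (and surely intended) convention that ``$\langle A\mid R\rangle$ is a presentation for $M$ with respect to $X$'' means the canonical epimorphism $\varphi$ induces the isomorphism, i.e.\ $\ker\varphi={\sim_R}$ --- under which your chain of equivalences goes through, but the converse direction becomes essentially definitional rather than something the forcing argument proves. You should either make that convention explicit or confine the claim to the ``if'' direction, which is the only one the paper ever uses.
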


For a finite monoid, a usual method to find a presentation is described by the following result 
(adapted to the monoid case from \cite[Proposition 3.2.2] {Ruskuc:1995}). 

\begin{proposition}[Guess and Prove method] \label{ruskuc} 
Let $M$ be a finite monoid generated by a set $X$ and let $A$ be an alphabet with the same size as $X$. 
Let $R\subseteq A^{*}\times A^{*}$ and $W\subseteq A^{*}$. Suppose that the following conditions are satisfied:
\begin{enumerate}
\item The generating set $X$ of $M$ satisfies all relations from $R$;
\item For each word $w\in A^{*}$, there exists a word $w'\in W$ such that $w\sim_R w'$;
\item $|W|\leqslant |M|$.
\end{enumerate}
Then, $M$ is defined by the presentation $\langle A\mid R\rangle$. 
\end{proposition}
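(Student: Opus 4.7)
The plan is to exploit the two pieces of information in the hypotheses: condition 1 yields a containment of congruences, while conditions 2 and 3 together yield a cardinality bound that pins that containment down to equality. Concretely, since $X$ generates $M$ and $A\rightarrow X$ is a bijection, I would first extend this bijection to the surjective monoid homomorphism $\varphi : A^{*}\rightarrow M$. Condition 1 says that $(u,v)\in R$ implies $u\varphi = v\varphi$, i.e. $R\subseteq \ker(\varphi)$; since $\ker(\varphi)$ is a congruence on $A^{*}$ and $\sim_{R}$ is by definition the smallest congruence containing $R$, it follows at once that $\sim_{R}\,\subseteq\,\ker(\varphi)$. Hence $\varphi$ factors through the natural projection, producing a surjective monoid homomorphism $\bar{\varphi} : A^{*}/\!\sim_{R}\;\rightarrow M$.

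Next I would close the loop with a counting argument. By condition 2, every $\sim_{R}$-class meets $W$, so $|A^{*}/\!\sim_{R}|\leqslant |W|$; combining with condition 3 gives $|A^{*}/\!\sim_{R}|\leqslant |M|$. On the other hand, surjectivity of $\bar{\varphi}$ forces $|A^{*}/\!\sim_{R}|\geqslant |M|$. As $M$ is finite, these inequalities collapse to equalities, and $\bar{\varphi}$ becomes a bijective homomorphism between finite monoids of the same cardinality, hence an isomorphism. Therefore $\langle A\mid R\rangle$ is a presentation of $M$, which is exactly the conclusion sought. (Equivalently, one could phrase the last step via Proposition~\ref{provingpresentation}: equality of cardinalities combined with $\sim_{R}\,\subseteq\,\ker(\varphi)$ forces $\sim_{R}\,=\,\ker(\varphi)$, so any two words that are equal in $M$ are $\sim_{R}$-equivalent, verifying condition~2 of that proposition.)

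I do not anticipate a genuine obstacle: the whole value of the proposition is precisely that it replaces a potentially hard implication ``$w\varphi = w'\varphi \Longrightarrow w\sim_{R} w'$'' by a more tractable cardinality estimate, and the proof simply unpacks this replacement. The only point deserving care is the implicit appeal to the universal property defining $\sim_{R}$ when passing from condition~1 to $\sim_{R}\,\subseteq\,\ker(\varphi)$; once that is in hand, the finiteness of $M$ does all the remaining work.
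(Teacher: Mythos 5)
Your argument is correct and complete: condition~1 gives $\sim_R\subseteq\ker(\varphi)$ via the universal property of the generated congruence, conditions~2 and~3 give $|A^*/\!\sim_R|\leqslant|W|\leqslant|M|$, and surjectivity of the induced map $\bar\varphi$ plus finiteness forces it to be an isomorphism. Note that the paper itself offers no proof of this proposition --- it is quoted as an adaptation of \cite[Proposition 3.2.2]{Ruskuc:1995} --- but your argument is precisely the standard one found there, so there is nothing to flag.
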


Notice that, if $W$ satisfies the above conditions then, in fact, $\lvert W\rvert =\lvert M\rvert$. Moreover, we say that $W\subseteq A^{*}$ is a set of  \emph{canonical forms} for a finite monoid $M$ with respect to $R\subseteq A^{*} \times A^{*}$, if the conditions of Proposition \ref{ruskuc} are satisfied. 

\section{Some properties of $\mathcal{PIO}_n$} 

Let $Y$ be a non-empty subset of $\Omega_{n}$ and let $(A_{1}, \ldots ,A_{r})$ be a partition of $Y$. 
If $x<y$ for all $x\in A_{i}$ and $y\in A_{i+1}$, for all $1\leqslant i\leqslant r-1$, and if each $A_{i}$ ($1\leqslant i\leqslant r$) is an interval of $\Omega_{n}$, then $(A_{1}, \ldots ,A_{r})$ is called an \emph{ordered partition into intervals} of $Y$. 
For any non-negative integers $r$ and $s$, let 
$$ 
[r,s] = \left\{ 
	\begin{array}{cl}
		\emptyset & \mbox{ if } s<r \vspace*{1mm} \\
		\{ i\in \mathbb{N} \mid r\leqslant i\leqslant s\} & \mbox{ if } r\leqslant s,
	\end{array} \right. 
$$
and, for $0\leqslant r, s\leqslant n$, let 
$$
J_{r}=\{ \alpha\in \mathcal{PIO}_{n} \mid \lvert \im(\alpha) \rvert=r \}\quad\text{and}\quad K_{s}=\{ \alpha\in \mathcal{PIO}_{n} \mid \lvert \dom(\alpha) \rvert=s \}.
$$
Then, it is clear that $J_{0}=K_{0} =\{ 0_{n} \}$ and $J_{n}=K_{n} =\{ 1_{n} \}$. 
Moreover, for $\alpha \in J_{r}\cap K_{s}$, with $1\leqslant r \leqslant s\leqslant n$, there exist $0\leqslant k\leqslant n-r$ 
and an ordered partition into intervals $(A_{1}, A_{2}, \ldots, A_{r})$ of $\dom(\alpha)$ such that 
$$
\alpha =\left(\begin{array}{cccc}
	A_{1} & A_{2} & \cdots & A_{r}\\
	k+1  &  k+2  & \cdots & k+r
\end{array}\right) .
$$

Next, recall the following well known identities: 
\begin{equation}\label{e1}
	\begin{array}{lcl}
		\sum\limits_{k=0}^{n} \binom{n}{k}=2^{n}, & & \sum \limits_{k=1}^{n} k2^{k}=(n-1)2^{n+1}+2, \vspace*{2mm}\\
		\sum\limits_{k=1}^{n} k\binom{n}{k}=n2^{n-1}, &&
		\sum \limits_{k=1}^{n} k^{2}2^{k}=(n^{2}-2n+3)2^{n+1}-6. 
	\end{array}
\end{equation}

We can now establish the cardinality of $\mathcal{PIO}_{n}$. 

\begin{theorem}\label{3} 
	$\lvert \mathcal{PIO}_{n}\rvert=(n+3)2^n -n^2-3n-2$.
\end{theorem}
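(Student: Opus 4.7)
The plan is to count $|\mathcal{PIO}_n|$ by parametrizing each non-empty $\alpha\in\mathcal{PIO}_n$ by three independent data: its domain, its image, and the way it maps the one onto the other. Since $\dom(\alpha)$ must be an interval, if $|\dom(\alpha)|=s$ then there are $n-s+1$ choices for $\dom(\alpha)$; similarly, if $|\im(\alpha)|=r$ then there are $n-r+1$ choices for $\im(\alpha)$. Finally, once the domain and image are fixed, specifying $\alpha$ amounts to specifying an order-preserving surjection from a chain of size $s$ onto a chain of size $r$, equivalently an ordered partition of $\dom(\alpha)$ into $r$ non-empty intervals (as in the description just before equation (\ref{e1})). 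The number of such compositions is $\binom{s-1}{r-1}$.

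Adding $1$ for $0_n$ gives
\[
|\mathcal{PIO}_n| \;=\; 1 + \sum_{s=1}^{n}(n-s+1)\sum_{r=1}^{s}(n-r+1)\binom{s-1}{r-1}.
\]
I would first carry out the inner sum: substituting $i=r-1$ and applying the two elementary binomial identities $\sum_i\binom{s-1}{i}=2^{s-1}$ and $\sum_i i\binom{s-1}{i}=(s-1)2^{s-2}$, the inner sum collapses to $n\cdot 2^{s-1}-(s-1)2^{s-2}=2^{s-2}(2n-s+1)$ (with the $s=1$ case fitting this closed form).

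This reduces the computation to
\[
|\mathcal{PIO}_n|-1 \;=\; \frac{1}{4}\sum_{s=1}^{n}(n-s+1)(2n-s+1)\,2^{s}.
\]
Expanding the quadratic $(n-s+1)(2n-s+1)=(n+1)(2n+1)-(3n+2)s+s^{2}$ splits the sum into three pieces of the shape $\sum 2^s$, $\sum s\,2^s$ and $\sum s^2 2^s$, which are exactly the three identities provided in (\ref{e1}). Substituting them in, the coefficient of $2^{n+1}$ becomes $(n+1)(2n+1)-(3n+2)(n-1)+(n^{2}-2n+3)=2n+6=2(n+3)$, and the constant term is $-4n^{2}-12n-12$; dividing by $4$ yields $(n+3)2^{n}-n^{2}-3n-3$, and adding back the $1$ for $0_n$ gives the claimed value.

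The conceptual content is entirely in the parametrization step (domain $\times$ image $\times$ composition); the rest is a determinate calculation whose only real pitfall is bookkeeping while collecting the coefficient of $2^{n+1}$ against the constant term, which is where I would be most careful.
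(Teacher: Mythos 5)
Your proposal is correct and follows essentially the same route as the paper: parametrize each non-empty $\alpha$ by its domain interval ($n-s+1$ choices), its image interval ($n-r+1$ choices), and an ordered partition of the domain into $r$ intervals ($\binom{s-1}{r-1}$ choices), then evaluate the double sum using the identities in (\ref{e1}). The only difference is cosmetic bookkeeping (you clear the $2^{s-2}$ denominator by a factor of $4$, whereas the paper handles $s=1$ separately), and your coefficient extraction checks out.
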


\begin{proof} For $1\leqslant r\leqslant s \leqslant n$, there exist $n-r+1$ intervals of $\Omega_{n}$ with $r$ elements,  $n-s+1$ intervals of $\Omega_{n}$ with $s$ elements and, if $Y$ is an interval of $\Omega_{n}$ with $s$ elements, $\binom{s-1}{r-1}$ ordered partitions into intervals of $Y$ with $r$ parts. Hence, for $1\leqslant s\leqslant n$, it follows from (\ref{e1}) that
	\begin{eqnarray*}
		\lvert K_{s}\rvert &=&(n-s+1)\sum\limits_{r=1}^{s}(n-r+1)\binom{s-1}{r-1}\\
		&=&(n-s+1)\left[(n+1)\sum\limits_{r=1}^{s}\binom{s-1}{r-1}- \sum\limits_{r=1}^{s}r\binom{s-1}{r-1}\right]\\
		&=&\left((2n^{2}+3n+1)-(3n+2)s+s^{2}\right)2^{s-2}.
	\end{eqnarray*}
In particular, $\lvert K_{1}\rvert=n^{2}$,  and so we obtain	
\begin{eqnarray*}
	\lvert \mathcal{PIO}_{n}\rvert &=&\sum\limits_{s=0}^{n}\lvert K_{s}\rvert \\ 
	&=& 1+n^{2}+ (2n^{2}+3n+1)\sum\limits_{s=2}^{n}2^{s-2} - (3n+2)\sum\limits_{s=2}^{n}s2^{s-2} +\sum\limits_{s=2}^{n}s^{2}2^{s-2}\\
	&=&(n+3)2^{n} -n^{2}-3n-2,
\end{eqnarray*}
as required.
\end{proof}

Let $S$ be a semigroup with zero $0$. 
An element $s\in S$ is called \textit{nilpotent} if there exists a positive integer $k$ such that $s^k=0$. 
Let us denote by $N(S)$ the set of all nilpotent elements of $S$. The semigroup $S$ is called \textit{nilpotent} if all its elements are nilpotent, 
i.e. if $N(S)=S$. Observe that, in general, $N(S)$ may not be a subsemigroup of $S$. 

Let  $\alpha\in \mathcal{PT}_{n}$. 
It is clear that, if $\alpha\in \mathcal{PT}_{n}$ is nilpotent, then $\fix(\alpha)=\emptyset$. 
On the other hand, it is easy to check that the converse is true for an \textit{aperiodic} element $\alpha\in\mathcal{PT}_{n}$
(i.e. such that there exists a positive integer $k$ with $\alpha^{k+1}=\alpha^k$), 
i.e. if $\alpha\in \mathcal{PT}_{n}$ is aperiodic and $\fix(\alpha)=\emptyset$, then $\alpha$ is nilpotent. 
In particular, since it is well known that all elements of $\mathcal{PO}_{n}$ are aperiodic, 
an element $\alpha\in \mathcal{PO}_{n}$ is nilpotent if and only if $\fix(\alpha)=\emptyset$ (see \cite[Lemma 2.1]{Garba:1994}). 
We now proceed to determine the cardinality of $N(\mathcal{PIO}_n)$. 

\begin{proposition}\label{4} 
	$\lvert N(\mathcal{PIO}_n)\rvert=2^{n+2}-n^2-3n-3$.
\end{proposition}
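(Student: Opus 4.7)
The plan is to exploit the fact that since $\mathcal{PIO}_n \subseteq \mathcal{PO}_n$, every element is aperiodic, so $\alpha$ is nilpotent if and only if $\fix(\alpha) = \emptyset$. Given a non-empty $\alpha \in \mathcal{PIO}_n$ with $\dom(\alpha)=[a,a+s-1]$, $\im(\alpha) = [k+1,k+r]$, and ordered interval partition encoded by cut-points $a=c_0<c_1<\cdots<c_r=a+s$, a fixed point occurs precisely when $k+i \in A_i = [c_{i-1},c_i-1]$ for some $i$. Setting $f_i = c_i - k - i$ produces a non-decreasing integer sequence for which the ``fixed-point-at-index-$i$'' condition becomes $f_{i-1}\leqslant 1 \leqslant f_i$. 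The first key step is to argue, using the monotonicity of $(f_i)$, that $\fix(\alpha)=\emptyset$ is equivalent to $f_r \leqslant 0$ or $f_0 \geqslant 2$, i.e.\ to $a+s \leqslant k+r$ or $a \geqslant k+2$; moreover these two mutually exclusive conditions correspond exactly to $\alpha$ being strictly order-increasing or strictly order-decreasing on its domain.

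The second step is to use the automorphism $\phi$ from the introduction: since $\phi$ reverses the order and preserves $\mathcal{PIO}_n$, it induces a bijection between the strictly increasing and strictly decreasing non-empty nilpotents, so the two cases contribute equally. I would then count the strictly decreasing case directly. For each $(r,s,a)$ with $1 \leqslant r \leqslant s \leqslant n$ and $1 \leqslant a \leqslant n-s+1$, the condition $a \geqslant k + 2$ allows exactly $\max(0, a-1)$ values of $k \in \{0, \ldots, n-r\}$ (since $s\geqslant r$ forces $a-2\leqslant n-r$), and the ordered partition is otherwise unconstrained, contributing $\binom{s-1}{r-1}$. Summing and swapping the summation order, using $\sum_{r=1}^{s}\binom{s-1}{r-1}=2^{s-1}$, the count reduces to
\begin{equation*}
\tfrac{1}{2}\sum_{u=0}^{n-2}(n-1-u)(n-u)\,2^u.
\end{equation*}

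Expanding the quadratic in $u$ and applying the three identities in (\ref{e1}) (with upper index $n-2$), the coefficient of $2^{n-1}$ collapses to $8$ after a clean cancellation, producing $2^{n+1}-(n^2+3n+4)/2$ strictly decreasing nilpotents. Doubling (by the $\phi$-symmetry) and adding $1$ for the zero transformation $0_n$ then yields $|N(\mathcal{PIO}_n)|=2^{n+2}-n^2-3n-3$. I expect the main difficulty to lie in the first step, namely establishing the ``strictly increasing or strictly decreasing'' dichotomy from the a priori weaker fixed-point-free hypothesis; once that structural characterization is in place, the remaining enumeration and algebraic simplification are routine applications of the identities already gathered in (\ref{e1}).
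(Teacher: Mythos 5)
Your proposal is correct, and it takes a genuinely different route from the paper. The paper characterizes the \emph{non}-nilpotent elements: for $\alpha\neq 0_n$, non-nilpotency is shown to be equivalent to $\im(\alpha)\subseteq\dom(\alpha)$, which lets the authors count $\mathcal{PIO}_n\setminus N(\mathcal{PIO}_n)$ by fixing the domain interval and invoking the known value $\lvert\mathcal{IO}_s\rvert=(s+1)2^{s-2}$, and then subtract from $\lvert\mathcal{PIO}_n\rvert$ (Theorem \ref{3}). You instead characterize the nilpotents themselves: your monotone-sequence argument with $f_i=c_i-k-i$ correctly establishes that a nonzero $\alpha\in\mathcal{PIO}_n$ is fixed-point-free if and only if $x\alpha<x$ for all $x\in\dom(\alpha)$ or $x\alpha>x$ for all $x\in\dom(\alpha)$ (this genuinely uses that both the domain and the image are intervals), and the $\phi$-symmetry then halves the enumeration. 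I checked your count: for fixed $(r,s,a)$ the strictly decreasing condition $k\leqslant a-2$ does admit exactly $a-1$ values of $k$, the inner sum over $r$ gives $2^{s-1}$, and the resulting $\tfrac12\sum_{u=0}^{n-2}(n-1-u)(n-u)2^u$ does simplify (the $2^{n-1}$-coefficient is indeed $8$) to $2^{n+1}-\tfrac{n^2+3n+4}{2}$; doubling and adding $1$ gives the stated formula. What each approach buys: the paper's is shorter because it leans on Theorem \ref{3} and the external formula for $\lvert\mathcal{IO}_s\rvert$, whereas yours is self-contained and, as a bonus, your intermediate count of the strictly decreasing nilpotents equals $\lvert N(\mathcal{PIO}_n^-)\rvert-1$, anticipating Proposition \ref{16}, and your structural dichotomy (every nonzero nilpotent of $\mathcal{PIO}_n$ lies in $\mathcal{PIO}_n^-\cup\mathcal{PIO}_n^+$) is a statement of independent interest that the paper never makes explicit.
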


\begin{proof} Let $\alpha\in \mathcal{PIO}_{n}\setminus N(\mathcal{PIO}_{n})$.  Then, 
there exist $1\leqslant p\leqslant k\leqslant q\leqslant n$ such that $\dom(\alpha)=[p,q]$ and $k\alpha =k$. 
Since $(x-1)\alpha\in\{x\alpha,x\alpha-1\}$ for all $x\in\dom(\alpha)\setminus\{p\}$, we have $[p\alpha, k]\subseteq [p,k]$ and $[k, q\alpha]\subseteq [k,q]$, whence $\im(\alpha)\subseteq \dom(\alpha)$. Conversely, let $\alpha\in \mathcal{PIO}_{n}$ be such that $\emptyset \neq \im(\alpha) \subseteq \dom(\alpha)$.  Then, $x\alpha^{m}\in \dom(\alpha)$ for all $x\in \dom(\alpha)$ and $m\in \mathbb{N}$, from which follows that 
$\alpha \notin N(\mathcal{PIO}_{n})$. Thus, $\alpha\in \mathcal{PIO}_{n}\setminus\{0_n\}$ is non-nilpotent if and only if $\im(\alpha)\subseteq \dom(\alpha)$. 
	
For an interval $Y$ of $\Omega_{n}$ with $s$ elements, let 
$$
\hat{K}_{s}(Y) = \{ \alpha\in \mathcal{PIO}_{n} \setminus N(\mathcal{PIO}_{n}) \mid \dom(\alpha) =Y\} \quad\text{and}\quad  
	\hat{K}_{s} = K_{s}\cap (\mathcal{PIO}_{n} \setminus N(\mathcal{PIO}_{n})). 
$$
For any $\alpha \in \hat{K}_{s}(Y)$, since $\im(\alpha) \subseteq \dom(\alpha)$, it follows from \cite[Theorem 2.6]{Fernandes&Paulista:2023} (see also \cite{Fernandes:2024}) that $\lvert \hat{K}_{s}(Y) \rvert = \lvert \mathcal{IO}_{s} \rvert =(s+1)2^{s-2}$. Since there exist $n-s+1$ intervals of $\Omega_{n}$ with $s$ elements, it follows that $\lvert \hat{K}_{s}\rvert =(n-s+1) (s+1)2^{s-2}$ and so,  by (\ref{e1}), 
	\begin{eqnarray*}
		\lvert  \mathcal{PIO}_{n}\setminus N(\mathcal{PIO}_{n})\rvert &=& 
		\sum\limits_{s=1}^{n}(n-s+1)(s+1)2^{s-2}\\
		&=&n+(n+1)\sum\limits_{s=2}^{n}2^{s-2}+n\sum\limits_{s=2}^{n}s2^{s-2} - \sum\limits_{s=2}^{n}s^{2}2^{s-2}\\
		&=&(n-1)2^{n}+1.
	\end{eqnarray*}
Hence, by Theorem \ref{3}, we obtain $\lvert N(\mathcal{PIO}_{n})\rvert = 2^{n+2} -n^{2} -3n-3$.
\end{proof}

Recall that an element $\alpha\in \mathcal{PT}_{n}$ is idempotent  (i.e. $\alpha^2=\alpha$) if and only if $\im(\alpha)=\fix(\alpha)$.  
For a subset $U$ of $ \mathcal{PT}_{n}$, denote the set of all idempotents in $U$ by $E(U)$. 
Observe that neither $N(\mathcal{PIO}_{n})$ nor $E(\mathcal{PIO}_{n})$ are semigroups, 
and that $N(\mathcal{PIO}_{n}) \cap E(\mathcal{PIO}_{n}) =\{ 0_{n}\}$. 
It has been shown in \cite[Corollary 2.9]{Fernandes&Paulista:2023} that $\lvert E(\mathcal{IO}_{n})\rvert =\frac{n(n+1)}{2}$. 
Next, we determine the cardinality of $E(\mathcal{PIO}_{n})$. 

\begin{proposition}\label{5}
	$\lvert E(\mathcal{PIO}_n)\rvert=1+\frac{n(n+1)(n+2)(n+3)}{24}$.
\end{proposition}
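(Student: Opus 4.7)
The plan is to characterize the idempotents of $\mathcal{PIO}_n$ very explicitly and then reduce the count to a standard binomial coefficient.

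First, I would recall that $\alpha\in\mathcal{PT}_n$ is idempotent iff $\im(\alpha)=\fix(\alpha)$, so in particular $\im(\alpha)\subseteq\dom(\alpha)$. Take a non-zero idempotent $\alpha\in E(\mathcal{PIO}_n)$. Since $\alpha\in\mathcal{PIO}_n$, both $\dom(\alpha)$ and $\im(\alpha)$ are intervals, say $\dom(\alpha)=[p,q]$ and $\im(\alpha)=[a,b]$ with $1\leqslant p\leqslant q\leqslant n$ and $a\leqslant b$; and since $\im(\alpha)\subseteq\dom(\alpha)$, we have $p\leqslant a\leqslant b\leqslant q$. Because $\im(\alpha)=\fix(\alpha)$, $\alpha$ acts as the identity on $[a,b]$.

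The next step is to observe that $\alpha$ is then completely determined by the quadruple $(p,a,b,q)$. Indeed, for any $x\in[p,a-1]$ (if non-empty), order-preservation gives $x\alpha\leqslant a\alpha=a$, while $x\alpha\in\im(\alpha)=[a,b]$ forces $x\alpha\geqslant a$; hence $x\alpha=a$. Dually, $x\alpha=b$ for every $x\in[b+1,q]$. Conversely, given any $1\leqslant p\leqslant a\leqslant b\leqslant q\leqslant n$, the transformation defined in this way is easily checked to lie in $\mathcal{PIO}_n$ (its domain and image are the intervals $[p,q]$ and $[a,b]$, and it is order-preserving), and it is idempotent since it fixes exactly the points in its image.

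Thus $E(\mathcal{PIO}_n)\setminus\{0_n\}$ is in bijection with the set of quadruples
$$
\{(p,a,b,q)\in\N^4\mid 1\leqslant p\leqslant a\leqslant b\leqslant q\leqslant n\},
$$
i.e.\ the set of weakly increasing $4$-tuples from $\{1,\ldots,n\}$. The number of such tuples is the multiset coefficient
$$
\binom{n+3}{4}=\frac{n(n+1)(n+2)(n+3)}{24}.
$$
Adding the idempotent $0_n$ yields the claimed formula.

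No step looks truly difficult; the only point that needs a little care is the case analysis showing that $\alpha$ is forced to be constant on $[p,a-1]$ and $[b+1,q]$, which uses both order-preservation and the identity $\im(\alpha)=\fix(\alpha)$ in an essential way. Once that is in hand, the count is immediate from the stars-and-bars formula, and no computation with the identities (\ref{e1}) is needed.
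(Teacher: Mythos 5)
Your proof is correct, and it takes a more self-contained route than the paper. You characterize every non-zero idempotent explicitly as the map that fixes $[a,b]$, sends $[p,a-1]$ to $a$ and $[b+1,q]$ to $b$, and then count the quadruples $1\leqslant p\leqslant a\leqslant b\leqslant q\leqslant n$ directly as $\binom{n+3}{4}$ by stars and bars. The paper instead fixes the domain interval $Y$ of size $s$, observes that the idempotents with domain $Y$ are in bijection with $E(\mathcal{IO}_s)$, invokes the previously established count $\lvert E(\mathcal{IO}_s)\rvert=\frac{s(s+1)}{2}$ from the Fernandes--Paulista paper, and then evaluates the sum $1+\sum_{s=1}^{n}(n-s+1)\frac{s(s+1)}{2}$. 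The two arguments are conceptually close (the paper's sum is exactly the count of your quadruples grouped by $s=q-p+1$), but yours buys a closed-form bijective count with no external citation and no summation identities, while the paper's buys brevity by reusing an earlier result; your forcing argument on $[p,a-1]$ and $[b+1,q]$ is the small extra verification that the paper delegates to the cited classification of $E(\mathcal{IO}_s)$. Both are complete and yield $1+\frac{n(n+1)(n+2)(n+3)}{24}$.
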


\begin{proof}  For $1\leqslant s\leqslant n$ and an interval $Y$ of $\Omega_{n}$ with $s$ elements, it is clear that 
$\lvert E(\hat{K}_{s}(Y))\rvert =\lvert E(\mathcal{IO}_{s}) \rvert = \frac{s(s+1)}{2}$ and so 
$\lvert E(\hat{K}_{s}) \rvert =(n-s+1) \lvert E(\mathcal{IO}_{s})\rvert =(n-s+1) \frac{s(s+1)}{2}$, 
since there are $n-s+1$ possible domais. 
Therefore, by including the zero element $0_{n}$, we obtain 
\begin{eqnarray*}
	\lvert E(\mathcal{PIO}_n)\rvert &=& 1+\sum\limits_{s=1}^{n} (n-s+1)\frac{s(s+1)}{2}\\
	&=&1+\frac{n+1}{2}\sum\limits_{s=1}^{n}s +\frac{n}{2}\sum \limits_{s=1}^{n} s^{2} -\frac{1}{2}\sum\limits_{s=1}^{n} s^{3} \\
	&=&1+\frac{n(n+1)(n+2)(n+3)}{24},
\end{eqnarray*}
as expected.
\end{proof}

\medskip 

Recall that an element $s$ of a semigroup $S$ is called \emph{regular} if there exists $t\in S$ such that $s=sts$. 
A semigroup is \emph{regular} if all its elements are regular. We now provide a description of the regular elements within $\mathcal{PIO}_{n}$.

\smallskip 

For any $\alpha\in\mathcal{PIO}_{n}\setminus\{0_n\}$, we define a transformation $\overline{\alpha}\in\mathcal{T}_n$  by
\begin{eqnarray}\label{e2}
	x\overline{\alpha} &=& \left\{ \begin{array}{cl} 
		x\alpha & \mbox{if $x\in\dom(\alpha)$} \\
		\min(\im(\alpha)) & \mbox{if $x<\min(\dom(\alpha))$} \\
		\max(\im(\alpha))  &\mbox{if $ x>\max(\dom(\alpha))$}
	\end{array}\right.
\end{eqnarray}
for all $x\in \Omega_{n}$. Then, clearly, $\overline{\alpha}\in \mathcal{IO}_{n}$ and $\im(\overline {\alpha})= \im(\alpha)$.

\begin{proposition}\label{6}
	For any $\alpha\in\mathcal{PIO}_{n}$, the following properties are equivalent: 
	\begin{enumerate} 
		\item [$(i)$] $\alpha$  is regular;
		\item [$(ii)$] $\lvert x\alpha^{-1}\rvert>1$ implies  $x\alpha^{-1}\cap \{\min(\dom(\alpha)),\max(\dom(\alpha))\}\neq \emptyset$ for all $x\in \im(\alpha)$; 
		\item [$(iii)$] $\lvert x\alpha^{-1}\rvert>1$ implies $x\in \{\min(\im(\alpha)), \max(\im(\alpha))\}$ for all $x\in \im(\alpha)$.
	\end{enumerate}
\end{proposition}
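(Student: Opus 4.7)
The plan is first to fix convenient notation and dispose of the trivial case $\alpha=0_n$, where all three conditions hold vacuously. For $\alpha\neq 0_n$, write $\im(\alpha)=[k+1,k+r]$ for some $0\leqslant k\leqslant n-r$, and for each $1\leqslant i\leqslant r$ set $A_i=(k+i)\alpha^{-1}$, so that $(A_1,\ldots,A_r)$ is an ordered partition into intervals of $\dom(\alpha)$. Write each $A_i=[a_i,c_i]$ with $c_i+1=a_{i+1}$. The equivalence $(ii)\Leftrightarrow(iii)$ is then essentially bookkeeping: since $\min(\dom(\alpha))=a_1\in A_1$ and $\max(\dom(\alpha))=c_r\in A_r$, the condition $x\alpha^{-1}\cap\{\min(\dom(\alpha)),\max(\dom(\alpha))\}\neq\emptyset$ holds precisely when $x\in\{k+1,k+r\}=\{\min(\im(\alpha)),\max(\im(\alpha))\}$.

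For $(iii)\Rightarrow(i)$, I will construct an explicit inverse. Define $\beta\in\mathcal{PT}_n$ with $\dom(\beta)=\im(\alpha)=[k+1,k+r]$ by setting $(k+i)\beta=b_i$, where: if $1<i<r$ then $b_i$ is the unique element of the singleton $A_i$; $b_1=\max(A_1)=a_2-1$ if $r\geqslant 2$ (or any element of $A_1$ if $r=1$); and $b_r=\min(A_r)=c_{r-1}+1$ if $r\geqslant 2$. A direct check shows $\{b_1,\ldots,b_r\}$ is a set of consecutive integers, hence an interval, so $\beta\in\mathcal{PIO}_n$, and $\alpha\beta\alpha=\alpha$ follows from $b_i\in A_i$.

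The main work is $(i)\Rightarrow(iii)$, which I will do by contrapositive. Suppose $\beta\in\mathcal{PIO}_n$ satisfies $\alpha\beta\alpha=\alpha$ and, for contradiction, suppose $|A_i|>1$ for some $1<i<r$. From $\alpha\beta\alpha=\alpha$ one has $\im(\alpha)\subseteq\dom(\beta)$ and $(k+j)\beta\in A_j$ for each $j$; call these values $b_{i-1},b_i,b_{i+1}$. Since $\beta$ is order-preserving, $b_{i-1}<b_i<b_{i+1}$, and since $\im(\beta)$ is an interval of $\Omega_n$, every integer between $b_{i-1}$ and $b_{i+1}$ is hit by $\beta$. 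I split into two cases: if $b_i>a_i$, then $b_{i-1}\leqslant c_{i-1}=a_i-1<a_i<b_i$, so $a_i\in\im(\beta)$, meaning some $t\in\dom(\beta)$ has $t\beta=a_i$; the order-preserving property forces $k+i-1<t<k+i$, impossible. If instead $b_i<c_i$, then $b_i<b_i+1\leqslant c_i<a_{i+1}\leqslant b_{i+1}$ and $b_i+1\in\im(\beta)$, forcing $k+i<t<k+i+1$ for some $t\in\dom(\beta)$, again impossible. Since $|A_i|>1$ excludes $b_i=a_i=c_i$, one of the two cases occurs, giving a contradiction.

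The main obstacle is the second case analysis above: one must carefully exploit both of the interval hypotheses on $\beta$ simultaneously ($\im(\beta)$ an interval forces the intermediate values to be attained, while $\dom(\beta)\subseteq\Omega_n$ being an interval of integers prevents any $t$ from sitting strictly between two consecutive integers $k+i-1,k+i$ or $k+i,k+i+1$). Once this is in place, the implications fit together cleanly.
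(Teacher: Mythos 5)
Your proof is correct, and its overall skeleton matches the paper's: handle $0_n$ trivially, observe that $(ii)\Leftrightarrow(iii)$ is pure bookkeeping about which kernel classes contain $\min(\dom\alpha)$ and $\max(\dom\alpha)$, and prove the hard equivalence with regularity via an explicit pseudo-inverse in one direction. The construction you give for $(iii)\Rightarrow(i)$ is essentially the paper's: the authors take $\beta=(\alpha|_{[k,k+r]})^{-1}$ for the unique subinterval of $\dom(\alpha)$ on which $\alpha$ is injective with full image, which is exactly your map $k+i\mapsto b_i$ with $b_1=\max(A_1)$, $b_r=\min(A_r)$ and the middle $b_i$ forced. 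Where you genuinely diverge is the direction $(i)\Rightarrow(iii)$: the paper extends $\alpha$ to the full transformation $\overline{\alpha}\in\mathcal{IO}_n$ defined in (\ref{e2}), checks that $\overline{\alpha}=\overline{\alpha}\,\overline{\beta}\,\overline{\alpha}$, and then imports the corresponding characterization of regularity in $\mathcal{IO}_n$ from \cite[Proposition 2.7]{Fernandes&Paulista:2023}; you instead argue directly from the two interval hypotheses on a witness $\beta$ (that $\im(\beta)$ being an interval forces intermediate values to be attained, while order-preservation pins any preimage strictly between two consecutive integers). Your route is self-contained and makes the combinatorial mechanism visible; the paper's route is shorter on the page and reuses existing machinery, at the cost of the (easy but unstated in detail) verification that $\overline{\alpha}\,\overline{\beta}\,\overline{\alpha}=\overline{\alpha}$. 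Both are sound; your case split ($b_i>a_i$ versus $b_i<c_i$) correctly exhausts the possibilities when $|A_i|>1$.
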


\begin{proof}  First, notice that $\alpha\in \mathcal{PIO}_n$ satisfies property $(ii)$ if and only if $\overline{\alpha}$ satisfies property $(2)$ of \cite[Proposition 2.7]{Fernandes&Paulista:2023}, namely  $\lvert x\alpha^{-1} \rvert>1$ implies  $x\alpha^{-1} \cap \{1,n\}\neq \emptyset$ for all $x\in \im(\alpha)$. Moreover, for any $\alpha\in \mathcal{PIO}_n$ and $x\in \im(\alpha)$, it is clear that $\min(\dom(\alpha)) \in x\alpha^{-1}$ if and only if  $x=\min( \im(\alpha))$, and that $\max(\dom(\alpha))\in x\alpha^{-1}$ if and only if $x=\max( \im(\alpha))$. Therefore, properties $(ii)$ and $(iii)$ are equivalent, and so it is enough to show that properties $(i)$ and $(ii)$ are equivalent. 
Observe that, if $\alpha=0_n$, then $\alpha$ is regular and satisfies trivially $(ii)$. 
	
$(i)\Rightarrow (ii)$ Suppose that $\alpha \in \mathcal{PIO}_{n}\setminus\{0_n\}$ is regular. Then, there exists $\beta \in \mathcal{PIO}_n$ such that $\alpha =\alpha \beta \alpha$. Hence $\im(\alpha) \subseteq \dom(\beta)$ and $\im(\alpha\beta)\subseteq \dom(\alpha)$, 
 and so it is easy to check that $\overline{\alpha} =\overline{\alpha} \overline{\beta} \overline{\alpha}$. 
 Therefore, $\overline{\alpha}$ is a regular element of $\mathcal{IO}_{n}$ and so $\overline{\alpha}$ satisfies property $(2)$ of 
\cite[Proposition 2.7]{Fernandes&Paulista:2023}, which implies that $\alpha$ satisfies property $(ii)$.
	
$(ii)\Rightarrow (i)$ Now, suppose that $\alpha \in \mathcal{PIO}_{n}\setminus\{0_n\}$ satisfies property $(ii)$. 
Take $1\leqslant p\leqslant n$, $1\leqslant q\leqslant n$, $0\leqslant r\leqslant n-q$ and $0\leqslant s\leqslant n-p$ such that 
$\dom(\alpha)=[p,p+s]$ and $\im(\alpha)=[q,q+r]$. Then, there exists a unique $p\leqslant k\leqslant p+s $ such that 
the restriction $\alpha|_{[k,k+r]}$ of $\alpha$ to $[k,k+r]$ is injective and $[k,k+r]\alpha=\im(\alpha)$. 
Therefore, it is clear that $\beta=(\alpha|_{[k,k+r]})^{-1}\in \mathcal{PIO}_{n}$ and $\alpha=\alpha\beta\alpha$. 
Thus, $\alpha$ is regular, as required. 
\end{proof}

It is a routine matter to check that, for $n\leqslant 3$, all elements of  $\mathcal{PIO}_n$ satisfy the property $(2)$.
On the other hand, for $n\geqslant 4$,
\begin{eqnarray*}
	\left(\begin{array}{ccccccc}
		1 & 2 & 3 & 4 & 5 & \cdots & n \\
		1 & 2 & 3 & 3 & 4 & \cdots & n-1 
	\end{array}\right),
\end{eqnarray*}
is an element of $\mathcal{PIO}_{n}$ which does not satisfy the property $2$ (or $(3)$) above. Therefore, we have the following corollary.

\begin{corollary}\label{7}
	The monoid   $\mathcal{PIO}_{n}$ is regular if and only if $n\leqslant 3$.
\end{corollary}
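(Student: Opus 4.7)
The plan is to invoke Proposition \ref{6}, so the corollary reduces to showing that condition $(iii)$ holds for every $\alpha\in\mathcal{PIO}_n$ precisely when $n\leqslant 3$. The sufficiency direction is handled by the element displayed just above the corollary, which already is noted to violate $(iii)$ for every $n\geqslant 4$ (the point $3$ lies in its image, has two preimages $3$ and $4$, yet is neither $\min\im(\alpha)=1$ nor $\max\im(\alpha)=n-1$). Hence the nontrivial direction is the necessity: for $n\leqslant 3$, every element of $\mathcal{PIO}_n$ must satisfy $(iii)$.

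For this I would argue by a short case analysis on $r:=\lvert\im(\alpha)\rvert$, using crucially that both $\dom(\alpha)$ and $\im(\alpha)$ are intervals contained in $\Omega_n$ with $n\leqslant 3$. If $r=0$ then $\alpha=0_n$ and $(iii)$ is vacuous. If $r=1$ then $\im(\alpha)=\{x\}$ with $x=\min\im(\alpha)=\max\im(\alpha)$, so the conclusion of $(iii)$ is automatic. If $r=2$, say $\im(\alpha)=\{x,y\}$ with $x<y$, then any element of the image is either $\min\im(\alpha)=x$ or $\max\im(\alpha)=y$, again making $(iii)$ automatic. The only remaining case is $r\geqslant 3$; since $r\leqslant\lvert\dom(\alpha)\rvert\leqslant n\leqslant 3$, this forces $r=n=3$ and $\lvert\dom(\alpha)\rvert=3$. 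Because $\alpha$ is order-preserving and both sets have the same size, $\alpha$ must be a bijection from $\Omega_3$ to $\Omega_3$, so $\lvert x\alpha^{-1}\rvert=1$ for every $x\in\im(\alpha)$ and the hypothesis of $(iii)$ is never triggered.

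Putting the two directions together yields the corollary. The argument is really a bookkeeping check rather than a deep one, and no step poses a substantial obstacle; the mildly delicate point is just remembering to use that $\dom(\alpha)$ and $\im(\alpha)$ are \emph{intervals}, which is what keeps the $r=2$ case clean (every image element is an endpoint). Once Proposition \ref{6} and the explicit bad element are in hand, there is no genuine difficulty.
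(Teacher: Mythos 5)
Your overall route is exactly the paper's: reduce to condition $(iii)$ of Proposition~\ref{6}, check it holds automatically for $n\leqslant 3$, and exhibit a violating element for $n\geqslant 4$. Your case analysis for $n\leqslant 3$ is correct and is a clean write-out of what the paper dismisses as routine (in fact it never uses the interval hypothesis: any element of a set of size at most $2$ is its minimum or maximum, and the case $\lvert\im(\alpha)\rvert\geqslant 3$ with $n\leqslant 3$ forces $\alpha$ to be injective).

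There is, however, a genuine slip in the other direction, at $n=4$. For $n=4$ the displayed element degenerates to $\left(\begin{smallmatrix}1&2&3&4\\1&2&3&3\end{smallmatrix}\right)$, an idempotent: here $3\alpha^{-1}=\{3,4\}$ but $3=\max(\im(\alpha))=n-1$, so your parenthetical claim that $3$ ``is neither $\min\im(\alpha)=1$ nor $\max\im(\alpha)=n-1$'' is false precisely when $n=4$, and this element satisfies $(iii)$ (indeed it is regular). The paper's own text has the same defect, so you have faithfully reproduced an error rather than introduced one, but as written your sufficiency direction does not cover $n=4$. The repair is immediate: collapse the middle pair instead of the last one, e.g.\ take
$$
\alpha=\left(\begin{array}{ccccccc}
1 & 2 & 3 & 4 & 5 & \cdots & n\\
1 & 2 & 2 & 3 & 4 & \cdots & n-1
\end{array}\right)\in\mathcal{PIO}_n ,
$$
for which $2\alpha^{-1}=\{2,3\}$ has two elements while $2\notin\{1,n-1\}=\{\min\im(\alpha),\max\im(\alpha)\}$ for every $n\geqslant 4$. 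With that substitution your argument is complete.
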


\medskip 

We end this section by determining generators and the rank of  $\mathcal{PIO}_n$. 

\smallskip 

For $n\geqslant 2$ and $1\leqslant i\leqslant n-1$, let us consider the following elements of $\mathcal{PIO}_n$: 
\begin{equation}\label{gen} 
\begin{array}{c}
	a_i = \begin{pmatrix} 
		1 & \cdots & i & i+1 & \cdots   & n \\
		1 & \cdots & i & i   & \cdots   & n-1
	\end{pmatrix},  \quad  
	b_i = \begin{pmatrix} 
		1 & \cdots & i   & i+1 & \cdots   & n \\
		2 & \cdots & i+1 & i+1 & \cdots   & n
	\end{pmatrix}, \vspace*{3mm}\\ 
 e_{i}=\begin{pmatrix} 
	1 & \cdots & i  \\
	1 & \cdots & i 
\end{pmatrix} \quad\text{ and }\quad
f_{i+1}=\begin{pmatrix} 
	i+1 & \cdots   & n \\
	i+1 & \cdots   & n
\end{pmatrix}.
\end{array}
\end{equation}

Recall that, it has been shown in \cite[Proposition 3.3 and Theorem 3.5] {Fernandes&Paulista:2023} that, for $n\geqslant3$,  
$\{a_1,\ldots, a_{n-2},b_{n-1}\}$ is a generating set of minimum size of $\mathcal{IO}_{n}$. 
Let 
$$
\mathcal{U} =\{a_1, \ldots, a_{n-2}, b_{n-1}, e_{n-1}, f_{2}\}
$$ 
and, for each $1\leqslant r\leqslant n-1$ and $1\leqslant j\leqslant n-r+1$, let 
\begin{eqnarray*}
	\lambda_{j,r} &=& \left(\begin{array}{cccccc}
		j & j+1 & \cdots & j+r-1 \\
		j & j+1 & \cdots & j+r-1
	\end{array}\right). 
\end{eqnarray*} 

\begin{lemma}\label{8}
	For $1\leqslant r\leqslant n-1$ and $1\leqslant j\leqslant n-r+1$,  $\lambda_{j,r}\in \langle \mathcal{U} \rangle$. 
\end{lemma}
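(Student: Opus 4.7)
The plan is to exhibit $\lambda_{j,r}$ explicitly as a product in $\mathcal{U}$ using two auxiliary ``shift'' elements. First, I would introduce
$$
\tau^{+}:=e_{n-1}b_{n-1}=\begin{pmatrix} 1 & 2 & \cdots & n-1 \\ 2 & 3 & \cdots & n\end{pmatrix}\quad\text{and}\quad
\tau^{-}:=f_{2}a_{1}=\begin{pmatrix} 2 & 3 & \cdots & n \\ 1 & 2 & \cdots & n-1\end{pmatrix},
$$
which both belong to $\langle\mathcal{U}\rangle$ by direct computation. Then, by a routine induction on $k$, one checks that for $0\leqslant k\leqslant n-1$ the powers $(\tau^{+})^{k}$ and $(\tau^{-})^{k}$ have domains $[1,n-k]$ and $[k+1,n]$ respectively, acting as $x\mapsto x+k$ and $x\mapsto x-k$.

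The main step is then to verify, again by direct computation, the identity
$$
\lambda_{j,r}=(\tau^{+})^{n-j-r+1}\,(\tau^{-})^{n-r}\,(\tau^{+})^{j-1},
$$
valid for all $1\leqslant r\leqslant n-1$ and $1\leqslant j\leqslant n-r+1$. Composing the first two factors yields a partial transformation with domain $[j,j+r-1]$ acting as $x\mapsto x-j+1$, and then the third factor shifts the image back up by $j-1$, producing the partial identity on $[j,j+r-1]$, which is exactly $\lambda_{j,r}$. At the endpoints $j=1$ or $j=n-r+1$ one of the outer exponents is $0$ and reduces to $1_n$, but the formula remains valid.

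I do not expect any real obstacle here; the whole argument is essentially bookkeeping once the shift elements $\tau^{\pm}$ are identified. The only genuine insight needed is to recognise that combining the partial generators $e_{n-1}$ and $f_{2}$ with the total generators $b_{n-1}$ and $a_{1}$ produces the elementary ``step-up'' and ``step-down'' maps, from which every interval-supported partial identity can be manufactured.
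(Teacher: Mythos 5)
Your argument is correct, and the identity $\lambda_{j,r}=(\tau^{+})^{n-j-r+1}(\tau^{-})^{n-r}(\tau^{+})^{j-1}$ does check out: the first block carries $[j,j+r-1]$ injectively onto $[n-r+1,n]$ while deleting everything below $j$ from the domain, the second block deletes nothing further and drops the image to $[1,r]$, and the third block (defined on $[1,n-j+1]\supseteq[1,r]$ precisely because $j\leqslant n-r+1$) lifts it back so that the composite is the partial identity on $[j,j+r-1]$. The route is genuinely different from the paper's. The paper treats $r=n-1$ separately (where $\lambda_{1,n-1}=e_{n-1}$ and $\lambda_{2,n-1}=f_{2}$) and, for $r\leqslant n-2$, sandwiches a single copy of $e_{n-1}$ and of $f_{2}$ between four auxiliary full transformations $\gamma_{j},\alpha_{r},\beta_{r},\delta_{j}$, whose membership in $\langle\mathcal{U}\rangle$ is obtained by quoting the result that $\{a_{1},\ldots,a_{n-2},b_{n-1}\}$ generates $\mathcal{IO}_{n}$. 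Your proof avoids that black box entirely and in fact proves the stronger statement that every $\lambda_{j,r}$ lies in $\langle a_{1},b_{n-1},e_{n-1},f_{2}\rangle$, at the cost of using many copies of the restriction idempotents rather than one of each. One small caveat: for $n=2$ the letter $a_{1}=a_{n-1}$ is not in $\mathcal{U}$, so $\tau^{-}$ is not available and you should fall back on $\lambda_{1,1}=e_{1}$ and $\lambda_{2,1}=f_{2}$ directly; this is immaterial in context, since the lemma is only invoked for $n\geqslant 3$.
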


\begin{proof} For $r=n-1$, it follows that $1\leqslant j\leqslant 2$, 
$\lambda_{1,n-1}=e_{n-1}$ and $\lambda_{2,n-1}=f_{2}$, whence $\lambda_{j,r}\in \mathcal{U}\subseteq \langle \mathcal{U} \rangle$.  
Then, suppose that $1\leqslant r\leqslant n-2$. Let 
$$
	\alpha_{r} = \left( \begin{array}{ccc|ccccc}
		1     & \cdots & r+1 & r+2 & \cdots & n \\
		n-r-1 & \cdots & n-1 & n   & \cdots & n
		\end{array}\right) 
$$
and
$$
	\beta_{r} = \left( \begin{array}{ccc|cccccc}
			1 & \cdots & n-r-1 & n-r & \cdots & n \\
			1 & \cdots & 1     & 2   & \cdots & r+2
		\end{array}\right).
$$
Clearly, 
$\alpha_{r}, \beta_{r} \in \mathcal{IO}_{n} =\langle \{a_1,\ldots, a_{n-2}, b_{n-1}\} \rangle \subseteq \langle \mathcal{U}\rangle$
and $\alpha_{r}e_{n-1}\beta_{r}f_{2}=\left(\begin{smallmatrix}
	2 & \cdots & r+1 \\
	2 & \cdots & r+1
\end{smallmatrix}\right)$. 
On the other hand, for $1\leqslant j\leqslant n-r+1$, let 
$$
	\gamma_{j} = \left( \begin{array}{ccc|ccc|ccc}
		1 & \cdots & j-1 & j & \cdots & j+r-1 & j+r & \cdots & n \\
		1 & \cdots & 1   & 2 & \cdots &  r+1  & r+2 & \cdots & r+2
	\end{array}\right) 
$$
and
$$
	\delta_{j} = \left( \begin{array}{c|ccc|ccc}
		1 & 2 &\cdots & r+1   & r+2   & \cdots & n \\
		j & j &\cdots & j+r-1 & j+r-1 & \cdots & j+r-1
	\end{array}\right). 
$$
Hence, it is also clear that $\gamma_{j}, \delta_{j}\in \mathcal{IO}_{n} \subseteq\langle \mathcal{U} \rangle$ and 
$\lambda_{j,r} =\gamma_{j} \alpha_{r} e_{n-1} \beta_{r} f_{2} \delta_{j}$. 
Thus, $\lambda_{j,r} \in\langle \mathcal{U} \rangle$, as required.
\end{proof}

Now, we can state and prove the following result.

\begin{proposition}\label{9}
For $n\geqslant 3$,	$\mathcal{PIO}_{n}=\langle \mathcal{U} \rangle$.
\end{proposition}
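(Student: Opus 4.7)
The plan is to prove the nontrivial inclusion $\mathcal{PIO}_n\subseteq\langle\mathcal{U}\rangle$ (the reverse inclusion is immediate, as $\mathcal{U}\subseteq\mathcal{PIO}_n$). Given $\alpha\in\mathcal{PIO}_n$, I split according to $\dom(\alpha)$. First, if $\dom(\alpha)=\Omega_n$, then $\alpha\in\mathcal{T}_n\cap\mathcal{PIO}_n=\mathcal{IO}_n$, and by the cited result of Fernandes and Paulista, $\mathcal{IO}_n=\langle a_1,\ldots,a_{n-2},b_{n-1}\rangle\subseteq\langle\mathcal{U}\rangle$, so $\alpha\in\langle\mathcal{U}\rangle$.

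The central case is when $\alpha\neq 0_n$ and $\dom(\alpha)=[p,q]$ is a proper sub-interval of $\Omega_n$, so $1\leqslant q-p+1\leqslant n-1$. I would use the key factorization
$$
\alpha=\lambda_{p,q-p+1}\,\overline{\alpha},
$$
where $\overline{\alpha}\in\mathcal{IO}_n$ is the extension of $\alpha$ defined in (\ref{e2}). This identity is immediate: $\lambda_{p,q-p+1}$ acts as the identity on $[p,q]$ and is undefined elsewhere, so the composed transformation has domain exactly $[p,q]$, and on this set $x\,(\lambda_{p,q-p+1}\overline{\alpha})=x\overline{\alpha}=x\alpha$. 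The first factor lies in $\langle\mathcal{U}\rangle$ by Lemma \ref{8}, and the second factor lies in $\mathcal{IO}_n\subseteq\langle\mathcal{U}\rangle$, so $\alpha\in\langle\mathcal{U}\rangle$.

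It remains to handle $\alpha=0_n$. Since $n\geqslant 3$, Lemma \ref{8} provides $\lambda_{1,1}=e_1$ and $\lambda_{n,1}=f_n$ in $\langle\mathcal{U}\rangle$; as $\im(\lambda_{1,1})=\{1\}$ is disjoint from $\dom(\lambda_{n,1})=\{n\}$, their product is empty, so $0_n=\lambda_{1,1}\lambda_{n,1}\in\langle\mathcal{U}\rangle$.

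There is no real obstacle here beyond setting up the factorization correctly; the conceptual content is the observation that the idempotents $\lambda_{j,r}$ serve precisely to truncate an $\mathcal{IO}_n$-element to any desired interval domain, which is exactly what the extra generators $e_{n-1},f_2$ buy us on top of the known $\mathcal{IO}_n$-generators. The only mildly delicate point is the zero element, which cannot be produced inside $\mathcal{IO}_n$ and must be assembled from two disjointly supported idempotents.
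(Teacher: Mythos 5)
Your proof is correct and follows essentially the same route as the paper: the full-domain case via $\mathcal{IO}_n$, the factorization $\alpha=\lambda_{\min(\dom\alpha),\,|\dom\alpha|}\,\overline{\alpha}$ for the main case, and a product of two $\lambda$'s with disjoint support for $0_n$ (the paper uses $\lambda_{1,1}\lambda_{2,2}$ instead of your $\lambda_{1,1}\lambda_{n,1}$, an immaterial difference).
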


\begin{proof} Let $\alpha\in \mathcal{PIO}_{n}$. Let $0\leqslant s\leqslant n$ be such that $\alpha\in K_{s}$. 
If $s=n$, then $\alpha\in \mathcal{IO}_{n} \subseteq\langle \mathcal{U} \rangle$. 
On the other hand, if $s=0$, then $\alpha =0_{n} = \lambda_{1,1} \lambda_{2,2} \in \langle \mathcal{U}\rangle$, by Lemma \ref{8}. 
So, suppose that $1\leqslant s\leqslant n-1$ and let $j=\min( \dom(\alpha))$.  Then, 
we have $\alpha=\lambda_{j,s}\overline{\alpha}$ and so, since $\overline{\alpha}\in \mathcal{IO}_{n} \subseteq\langle \mathcal{U} \rangle$ and by Lemma \ref{8}, we obtain $\alpha\in \langle \mathcal{U}\rangle$. 
\end{proof}

The following lemma can be proved in exactly the same way as \cite[Lemma 3.4]{Fernandes&Paulista:2023}.

\begin{lemma}\label{10}
Let $M$ be a submonoid of  $\mathcal{PT}_n$ with trivial group of units. 
Let $X$ be a set of generators of $M$. Then, $X$ contains at least one element with each of the possible kernels of elements with images of size $n-1$. 
\end{lemma}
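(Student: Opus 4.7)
The plan is to take any $\alpha\in M$ with $|\im(\alpha)|=n-1$ and any factorization $\alpha=x_1\cdots x_k$ with $x_i\in X$ (which exists since $X$ generates $M$) and show, after a mild cleanup, that $\ker(x_1)=\ker(\alpha)$. Since $x_1\in X$, this will prove the lemma. The cleanup step is: any factor with $|\im(x_i)|=n$ is total and injective on $\Omega_n$, hence a permutation; a permutation on a finite set has finite order, so it is invertible in $M$, i.e. lies in the group of units, and by the hypothesis it equals $1_n$. Deleting all such factors we obtain a factorization of $\alpha$ in which every remaining $x_i$ satisfies $|\im(x_i)|\leqslant n-1$, and which is non-empty because $|\im(\alpha)|=n-1<n$.

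I would then split the argument according to $|\dom(\alpha)|$, which must be $n-1$ or $n$. In Case~A, where $\dom(\alpha)=\Omega_n$, the relation $\ker(\alpha)$ is an equivalence on $\Omega_n$ with exactly one non-singleton class, of size $2$. The inclusion $\dom(\alpha)\subseteq \dom(x_1)$ forces $\dom(x_1)=\Omega_n$, and from $\alpha=x_1\cdot(x_2\cdots x_k)$ one checks that $\ker(x_1)\subseteq \ker(\alpha)$ as binary relations on $\Omega_n\times\Omega_n$. Since $|\im(x_1)|\leqslant n-1$, the map $x_1$ is not injective, so $\ker(x_1)$ has at least one non-singleton class; being contained in a kernel with a unique such class, $\ker(x_1)=\ker(\alpha)$.

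In Case~B, where $|\dom(\alpha)|=n-1$, the map $\alpha|_{\dom(\alpha)}$ is injective, so $\ker(\alpha)=\mathrm{id}_{\dom(\alpha)}$; let $y_0$ be the unique element of $\Omega_n\setminus\dom(\alpha)$. The crux is to prove $\dom(x_1)=\dom(\alpha)$: once that is in hand, injectivity of $\alpha$ on $\dom(\alpha)$ forces injectivity of $x_1$ on $\dom(x_1)=\dom(\alpha)$, and hence $\ker(x_1)=\mathrm{id}_{\dom(\alpha)}=\ker(\alpha)$. I would establish $\dom(x_1)\subseteq\dom(\alpha)$ by contradiction: if $\dom(x_1)=\Omega_n$, then $y_0x_1$ is defined, and injectivity of $x_1|_{\dom(\alpha)}$ gives $|\dom(\alpha)x_1|=n-1$. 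If $y_0x_1\in\dom(\alpha)x_1$, say $y_0x_1=zx_1$ with $z\in\dom(\alpha)$, then $y_0x_1=zx_1\in\dom(x_2\cdots x_k)$, placing $y_0$ in $\dom(\alpha)$, which is impossible; otherwise $y_0x_1\notin\dom(\alpha)x_1$, so $x_1$ is injective on the whole of $\Omega_n$, hence a permutation, hence $1_n$ by the cleanup, again impossible.

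The main obstacle I anticipate is precisely this last step in Case~B, where the trivial-units hypothesis is invoked in a slightly subtle way to exclude that $x_1$ has strictly larger domain than $\alpha$. The Case~A argument, and the standard translation between kernels of $x_1$ and of $\alpha$ through the composition, are routine facts about how images, domains, and kernels behave under composition of partial transformations.
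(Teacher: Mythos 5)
Your proof is correct. The paper gives no argument of its own here, merely deferring to the analogous lemma for submonoids of $\mathcal{T}_n$ in Fernandes--Paulista; your Case~A (total domain) is exactly that standard argument --- strip off unit factors, then the first factor is non-injective with kernel contained in the unique $(n-1)$-class kernel of $\alpha$ --- and your Case~B correctly supplies the additional domain analysis that the partial-transformation setting requires, which is precisely where the trivial-units hypothesis is needed a second time.
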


Now, observe that $\mathcal{PIO}_{1}=\{0_{1}, 1_{1}\}$ and so, clearly, the rank of $\mathcal{PIO}_{1}$ is $1$. 
Also, it is routine matter to check that   
$\mathcal{U}=\left\{\left( \begin{smallmatrix}
	1 \\
	2 
\end{smallmatrix}\right),  \left( \begin{smallmatrix}
	2 \\
	1
\end{smallmatrix}\right) , \left( \begin{smallmatrix}
	1&2 \\
	1&1
\end{smallmatrix}\right)\right\}$ is a set of generators of $\mathcal{PIO}_{2}$ with minimum size, whence the rank of  $\mathcal{PIO}_{2}$ is $3$. 
On the other hand, for $n\geqslant3$, since $\mathcal{U}$ consists of $n+1$ elements with images of size $n-1$ all of them with different kernels, 
we immediately have the following result:

\begin{theorem}\label{11}
For $n\geqslant 2$, the rank of the monoid $\mathcal{PIO}_n$ is $n+1$. 
\end{theorem}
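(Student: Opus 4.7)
The plan is to combine the upper bound from Proposition \ref{9} with the lower bound supplied by Lemma \ref{10}. For $n=2$, the bound is already established in the paragraph immediately preceding the theorem (the three-element generating set exhibited there has minimum size). So I may restrict attention to $n \geqslant 3$.

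For the upper bound, Proposition \ref{9} asserts $\mathcal{PIO}_n = \langle \mathcal{U}\rangle$ with $\lvert \mathcal{U}\rvert = n+1$, giving rank $\leqslant n+1$ immediately.

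For the lower bound, I would first observe that the group of units of $\mathcal{PIO}_n$ is trivial: a unit must have domain and image equal to $\Omega_n$ and be bijective, but the only order-preserving bijection of a finite chain is the identity. Hence Lemma \ref{10} applies. Next, I would verify that each of the $n+1$ elements of $\mathcal{U}$ has image of size $n-1$: indeed, $\im(a_i)=\im(e_{n-1})=[1,n-1]$ and $\im(b_{n-1})=\im(f_2)=[2,n]$. The key step is to show that the kernels of these $n+1$ elements are pairwise distinct. For $1\leqslant i\leqslant n-2$, the element $a_i$ has full domain $\Omega_n$ and its unique non-singleton kernel class is $\{i,i+1\}$; similarly, $b_{n-1}$ has full domain and non-singleton class $\{n-1,n\}$. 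Thus $a_1,\ldots,a_{n-2},b_{n-1}$ yield $n-1$ mutually distinct kernels, all of which are relations on $\Omega_n\times\Omega_n$. On the other hand, $e_{n-1}$ and $f_2$ are injective with domains $[1,n-1]$ and $[2,n]$ respectively, so their kernels are the diagonals on those two distinct intervals; these kernels are relations on strictly smaller sets and so differ from each other and from the previous $n-1$ kernels.

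Consequently there exist at least $n+1$ distinct kernels realized by elements of $\mathcal{PIO}_n$ having image of size $n-1$. By Lemma \ref{10}, any generating set of $\mathcal{PIO}_n$ must contain at least one element with each such kernel, and therefore has at least $n+1$ elements. Combined with the upper bound, this gives rank equal to $n+1$. The proof involves no genuine obstacle; the only care required is the verification that kernels are distinguished both by their underlying domain and, when domains coincide, by the location of the unique two-element class.
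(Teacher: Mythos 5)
Your proposal is correct and follows essentially the same route as the paper: the upper bound from Proposition \ref{9}, and the lower bound by applying Lemma \ref{10} to the $n+1$ elements of $\mathcal{U}$, which all have rank $n-1$ and pairwise distinct kernels. You simply spell out the kernel verification that the paper leaves as an immediate check.
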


\section{Some properties of $\mathcal{PIO}_{n}^{-}$}

In this section, similar to the previous one, 
we investigate some combinatorial and algebraic properties of $\mathcal{PIO}_{n}^{-}$ and, since $\mathcal{PIO}_{n}^{+}$ and $\mathcal{PIO}_{n}^{-}$ are isomorphic, of $\mathcal{PIO}_{n}^{+}$ as well. 

For $0\leqslant r\leqslant n$, let us define $\mathcal{W}_r=\{\alpha\in \mathcal{PIO}_{n}^{-} \mid \lvert \dom\alpha \rvert=r\}$. 
We begin by calculating the cardinality of $\mathcal{PIO}_{n}^{-}$. 

\begin{theorem}\label{12} 
	$\lvert \mathcal{PIO}_{n}^{-}\rvert=2^{n+2}-\frac{(n+2)(n+3)}{2}$.
\end{theorem}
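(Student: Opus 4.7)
The plan is to follow the pattern of the proof of Theorem \ref{3}: I partition $\mathcal{PIO}_{n}^{-}$ using the sets $\mathcal{W}_r$, compute each $\lvert\mathcal{W}_r\rvert$, and then sum over $r=0,1,\ldots,n$ using the identities (\ref{e1}).

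Fix $r\in\{1,\ldots,n\}$. Any $\alpha\in\mathcal{W}_r$ has $\dom(\alpha)=[p,p+r-1]$ with $1\leqslant p\leqslant n-r+1$ and $\im(\alpha)=[q,q+s-1]$ with $1\leqslant s\leqslant r$, where $q=p\alpha$. Order-decreasingness at $p$ forces $q\leqslant p$. Conversely, I claim $q\leqslant p$ is already sufficient for $\alpha$ to be order-decreasing on all of $\dom(\alpha)$: since $\alpha$ is order-preserving and its image is the interval $[q,q+s-1]$, the differences $(x+1)\alpha-x\alpha$ must lie in $\{0,1\}$ (a jump of size $\geqslant 2$ would leave a value in $[q,q+s-1]$ unattained), whence $x\alpha\leqslant q+(x-p)\leqslant x$ for every $x\in\dom(\alpha)$. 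The number of order-preserving surjections from an $r$-chain onto an $s$-chain equals $\binom{r-1}{s-1}$ (choose the $s-1$ positions where the map jumps), so for each $(p,q,s)$ with $1\leqslant q\leqslant p$ and $1\leqslant s\leqslant r$ there are $\binom{r-1}{s-1}$ such transformations. Summing over $s$, then $q$, and then $p$ yields
$$
\lvert\mathcal{W}_r\rvert=\sum_{p=1}^{n-r+1} p\sum_{s=1}^{r}\binom{r-1}{s-1}=2^{r-1}\cdot\frac{(n-r+1)(n-r+2)}{2}.
$$

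Combined with $\lvert\mathcal{W}_0\rvert=1$, this gives $\lvert\mathcal{PIO}_n^-\rvert=1+\frac{1}{2}\sum_{r=1}^n 2^{r-1}(n-r+1)(n-r+2)$. Expanding $(n-r+1)(n-r+2)=(n+1)(n+2)-(2n+3)r+r^{2}$ and applying the three identities of (\ref{e1}) (halved so $2^r$ becomes $2^{r-1}$), the coefficients of $2^n$ collapse to $8$ while the constant terms combine to $-(n^{2}+5n+8)$; thus the inner sum equals $2^{n+3}-n^{2}-5n-8$, and halving and adding $1$ produces the desired formula $2^{n+2}-\frac{(n+2)(n+3)}{2}$. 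The main obstacle is not the final arithmetic, which is routine given (\ref{e1}), but the middle step: one must recognise that requiring the image to be an interval forces an order-preserving map to rise in unit steps, which is exactly what makes the single inequality $q\leqslant p$ equivalent to full order-decreasingness on $\dom(\alpha)$.
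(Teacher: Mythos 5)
Your proof is correct and follows essentially the same route as the paper: partition $\mathcal{PIO}_{n}^{-}$ by $\lvert\dom(\alpha)\rvert$ into the sets $\mathcal{W}_r$, show there are $p\cdot 2^{r-1}$ elements with domain $[p,p+r-1]$, and sum using (\ref{e1}). The only difference is that the paper obtains the per-domain count by citing \cite[Corollary 3.2]{Fernandes:2024}, whereas you re-derive it self-containedly via the (correct) observation that an order-preserving map whose image is an interval must rise in unit steps, so order-decreasingness reduces to the single condition $q\leqslant p$ at the left endpoint.
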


\begin{proof}  
First, observe that $\mathcal{W}_{0} =\{0_{n}\}$. 
Then, take $1\leqslant r\leqslant n$ and noticed that , for each $\alpha\in \mathcal{W}_{r}$, 
there exists $1\leqslant k\leqslant n-r+1$ such that $\dom(\alpha)=[k, k+r-1]$, and so, 
by \cite[Corollary 3.2] {Fernandes:2024}, it is easy to conclude that there are 
$k2^{r-1}$ elements in $\mathcal{W}_{r}$ with domain $[k, k+r-1]$. Hence, we get
$$
\lvert \mathcal{W}_{r}\rvert= \sum\limits_{k=1}^{n-r+1}k2^{r-1}=2^{r-2}(n^{2}+3n+2 -(2n+3)r+r^{2}).
$$ 
Since the sets $\mathcal{W}_{0},\dots, \mathcal{W}_{n}$ are disjoint, it follows that
\begin{eqnarray*}
	\lvert \mathcal{PIO}_{n}^{-}\rvert &=& 1+\sum\limits_{r=1}^{n}2^{r-2}(n^{2}+3n+2 -(2n+3)r+r^{2}) \\
	& = & 1 + (n^{2}+3n+2)\sum_{r=1}^{n}2^{r-2} - (2n+3) \sum_{r=1}^{n}2^{r-1} +\sum_{r=1}^{n}2^r\\ 
	&=&2^{n+2}-\frac{(n+2)(n+3)}{2},
\end{eqnarray*}
as claimed. 
\end{proof}

Next, we determine the number of idempotents in $\mathcal{PIO}_n^-$. 

\begin{proposition}\label{13}
	$\lvert E(\mathcal{PIO}_n^-)\rvert=1+\frac{n(n+1)(n+2)}{6}$.
\end{proposition}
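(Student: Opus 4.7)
The plan is to characterize the non-zero idempotents of $\mathcal{PIO}_n^-$ explicitly, show that each is uniquely determined by a triple of parameters with a simple inequality, and then count such triples.

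First I would recall that $\alpha\in\mathcal{PT}_n$ is idempotent if and only if $\im(\alpha)=\fix(\alpha)$. So, fix a non-zero idempotent $\alpha\in\mathcal{PIO}_n^-$ and write $\dom(\alpha)=[p,q]$ with $1\leqslant p\leqslant q\leqslant n$. Since $\alpha$ is order-preserving, $\im(\alpha)=[p\alpha,q\alpha]$; since $\alpha$ is order-decreasing, $p\alpha\leqslant p$ and $q\alpha\leqslant q$. The idempotence condition forces $\im(\alpha)\subseteq\fix(\alpha)\subseteq\dom(\alpha)$, hence in particular $p\alpha\geqslant p$, giving $p\alpha=p$. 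Setting $b=q\alpha$, we have $\im(\alpha)=[p,b]$ with $p\leqslant b\leqslant q$, and every $x\in[p,b]$ is a fixed point.

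Next I would pin down the values on $[b+1,q]$. For any $x\in[b+1,q]$, order-preservation applied to $b<x$ gives $b=b\alpha\leqslant x\alpha$, while $x\alpha\in\im(\alpha)=[p,b]$ gives $x\alpha\leqslant b$; so $x\alpha=b$. Thus $\alpha$ is completely determined by the triple $(p,b,q)$ with $1\leqslant p\leqslant b\leqslant q\leqslant n$, namely
$$
\alpha=\begin{pmatrix} p & p+1 & \cdots & b & b+1 & \cdots & q\\ p & p+1 & \cdots & b & b & \cdots & b\end{pmatrix}.
$$
Conversely, any such triple clearly yields an idempotent in $\mathcal{PIO}_n^-$.

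Finally, I would count the triples. The number of $(p,b,q)$ with $1\leqslant p\leqslant b\leqslant q\leqslant n$ is the number of multisets of size $3$ from $\{1,\ldots,n\}$, i.e.\ $\binom{n+2}{3}=\frac{n(n+1)(n+2)}{6}$. Adding the zero $0_n$ (which is also idempotent but not captured above, since the domain is empty) gives $\lvert E(\mathcal{PIO}_n^-)\rvert=1+\frac{n(n+1)(n+2)}{6}$, as required.

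No step here looks genuinely hard; the only care needed is the order-preservation argument forcing $x\alpha=b$ on $[b+1,q]$, and making sure not to double-count the zero. If a direct combinatorial derivation is preferred, one can instead sum $\sum_{p=1}^n\sum_{b=p}^n(n-b+1)=\sum_{k=1}^n\frac{k(k+1)}{2}=\frac{n(n+1)(n+2)}{6}$, matching the stated formula.
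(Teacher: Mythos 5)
Your proof is correct and follows essentially the same route as the paper: both identify the non-zero idempotents of $\mathcal{PIO}_n^-$ as exactly the maps that fix an initial segment $[p,b]$ of their domain $[p,q]$ and send the rest to $b$, and then count. The only differences are cosmetic -- you justify the normal form in more detail and count the triples $(p,b,q)$ directly as $\binom{n+2}{3}$, whereas the paper groups by domain size and sums $\sum_{r=1}^{n}r(n-r+1)$.
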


\begin{proof} 
Let $\alpha\in  E(\mathcal{W}_{r})$, with $1\leqslant r\leqslant n$.  Then, $\dom(\alpha)= [i, i+r-1]$, for some $1\leqslant i\leqslant n-r+1$. 
Hence,  as $\fix(\alpha)=\im(\alpha)$, $\alpha$ must take the following form:  
\begin{eqnarray*}
	\alpha=	\left(\begin{array}{ccccccc}
		i & i+1  & \cdots  & i+k & i+k+1 & \cdots & i+r-1 \\
		i & i+1  & \cdots  & i+k & i+k   & \cdots & i+k 
	\end{array}\right),
\end{eqnarray*}
for some  $0\leqslant k\leqslant r-1$. 
Since the number of possibilities for $\dom(\alpha)$ is $n-r+1$, we obtain $\lvert E(\mathcal{W}_{r}) \rvert= r(n-r+1)$. 
By including the empty transformation $0_{n}$, the number of idempotents in $\mathcal{PIO}_{n}^{-}$ is then 
$$
	\lvert E(\mathcal{PIO}_n^-)\rvert= 1+\sum\limits_{r=1}^{n} r(n-r+1)=1+\frac{n(n+1)(n+2)}{6}, 
$$
as required. 
\end{proof} 

\medskip 

Now, let  $M$ be a submonoid of $\mathcal{PT}_n^-$. 
Then, it is easy to show that an element $\alpha\in M$ is regular if and only if $\alpha$ is idempotent. 
Moreover, it is also clear that the product of two nilpotent elements of $M$ is a nilpotent element of $M$, whence 
$N(M)$ is a (nilpotent) subsemigroup of $M$ or the empty set. 
Therefore, in particular, the only regular elements of $\mathcal{PIO}_n^-$ are its idempotents and $N(\mathcal{PIO}_n^-)$ is a subsemigroup of 
$\mathcal{PIO}_n^-$. 

\medskip 

Next, we compute the size and (semigroup) rank of $N(\mathcal{PIO}_{n}^{-})$. Observe that, $\mathcal{PIO}_{1}^{-}=\mathcal{PIO}_{1}=\{0_1,1_1\}$, 
whence $N(\mathcal{PIO}_{1}^{-})=\{0_1\}$, which clearly has rank $1$. 

\begin{proposition}\label{16} 
	$\lvert N(\mathcal{PIO}_{n}^{-})\rvert=2^{n+1}-\frac{(n+1)(n+2)}{2}$.
\end{proposition}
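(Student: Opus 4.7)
The plan is to stratify $N(\mathcal{PIO}_n^-)$ by domain, in the same spirit as the proof of Theorem~\ref{12}. The key observation is a clean characterization of nilpotency: a non-empty $\alpha\in \mathcal{PIO}_n^-$ with $\dom(\alpha)=[k,k+r-1]$ and $\im(\alpha)=[j+1,j+s]$ is nilpotent if and only if $j+1<k$, i.e.\ $\min(\im(\alpha))<\min(\dom(\alpha))$. Since elements of $\mathcal{PO}_n$ are aperiodic, nilpotency amounts to $\fix(\alpha)=\emptyset$, and $k\alpha=j+1$ by order-preservation, giving the ``only if'' direction. For the converse, let $(A_1,\ldots,A_s)$ be the partition of $\dom(\alpha)$ into fibers, with $A_i\alpha=\{j+i\}$, so that $\min(A_i)\geqslant k+i-1$ for each $i$. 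Assuming $j+1<k$, we get $j+i<k+i-1\leqslant \min(A_i)\leqslant x$ for every $x\in A_i$, whence $x\alpha<x$ throughout $\dom(\alpha)$, and thus $\fix(\alpha)=\emptyset$.

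Next, for fixed $1\leqslant r\leqslant n$ and $1\leqslant k\leqslant n-r+1$, I would count the nilpotents with $\dom(\alpha)=[k,k+r-1]$. The elements of $\mathcal{W}_r$ with this domain are parametrized by a choice of $j\in\{0,1,\ldots,k-1\}$ (the order-decreasing condition forces $j+1\leqslant k$) together with an ordered partition of $[k,k+r-1]$ into $s\leqslant r$ intervals (the fibers); the latter contributes $\sum_{s=1}^{r}\binom{r-1}{s-1}=2^{r-1}$, recovering the total $k\cdot 2^{r-1}$ already used in the proof of Theorem~\ref{12}. By the characterization above, the nilpotents correspond to $j\in\{0,1,\ldots,k-2\}$, giving $(k-1)\cdot 2^{r-1}$ nilpotents (vacuously zero when $k=1$). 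Summing over $k$ yields
\[
|N(\mathcal{PIO}_n^-)\cap\mathcal{W}_r|=\sum_{k=2}^{n-r+1}(k-1)2^{r-1}=2^{r-2}(n-r)(n-r+1).
\]

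Adding the contribution of the empty transformation $0_n$ gives
\[
|N(\mathcal{PIO}_n^-)|=1+\sum_{r=1}^{n}2^{r-2}(n-r)(n-r+1),
\]
and expanding $(n-r)(n-r+1)=r^2-(2n+1)r+n(n+1)$ reduces the sum to a linear combination of $\sum_{r=1}^{n} 2^r$, $\sum_{r=1}^{n} r\cdot 2^r$ and $\sum_{r=1}^{n} r^2\cdot 2^r$; substituting the identities in \eqref{e1} collapses this to the claimed $2^{n+1}-\frac{(n+1)(n+2)}{2}$. The main obstacle is the nilpotency characterization, and specifically its sufficient direction, which requires exploiting the interval structure of both $\dom(\alpha)$ and the fibers of $\alpha$; once this is in hand, the rest is routine bookkeeping parallel to Theorem~\ref{12} and Proposition~\ref{13}.
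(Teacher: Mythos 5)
Your proposal is correct, and it takes a genuinely different route from the paper. The paper's proof is a one-line reduction: it shows that the shift map $\alpha\mapsto\widehat{\alpha}$ (translating the domain of $\alpha\in\mathcal{PIO}_{n-1}^-$ up by one) is a bijection from $\mathcal{PIO}_{n-1}^-$ onto $N(\mathcal{PIO}_{n}^{-})$, and then simply substitutes $n-1$ into the formula of Theorem~\ref{12}; the point is that $x\widehat{\alpha}=(x-1)\alpha\leqslant x-1<x$ forces $\fix(\widehat{\alpha})=\emptyset$, while conversely any nilpotent has $x\alpha\leqslant x-1$ throughout its domain, hence avoids $1$ and can be shifted down. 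You instead characterize the nilpotents intrinsically --- $\alpha\neq 0_n$ is nilpotent iff $\min(\im(\alpha))<\min(\dom(\alpha))$, proved via the fiber estimate $\min(A_i)\geqslant k+i-1$ --- and then redo the stratified count of Theorem~\ref{12} with the parameter $j$ restricted to $\{0,\ldots,k-2\}$ instead of $\{0,\ldots,k-1\}$. I checked your summation: $1+\sum_{r=1}^{n}2^{r-2}(n-r)(n-r+1)$ does collapse to $2^{n+1}-\frac{(n+1)(n+2)}{2}$ using the identities in~(\ref{e1}). The paper's argument is shorter and exhibits a clean set bijection $N(\mathcal{PIO}_{n}^{-})\leftrightarrow\mathcal{PIO}_{n-1}^-$; yours is longer but self-contained at the level of the stratification and yields, as a by-product, an explicit description of which elements are nilpotent (equivalently, your fiber estimate shows $j+1\leqslant k$ is exactly the order-decreasing constraint, which also re-derives the count $k2^{r-1}$ that the paper imports from \cite{Fernandes:2024}). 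The only gloss is that you state sufficiency of $j+1\leqslant k$ for membership in $\mathcal{PIO}_n^-$ without spelling it out, but it is the same inequality $j+i\leqslant k+i-1\leqslant\min(A_i)$ you already use, so there is no gap.
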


\begin{proof}  
For $n=1$, we have $\lvert N(\mathcal{PIO}_{n}^{-})\rvert=1$ and so the equality holds. So, suppose that $n\geqslant2$. 
For each $\alpha\in \mathcal{PIO}_{n-1}^{-}$, define $\widehat{\alpha}\in\mathcal{PT}_{n}$ by 
$\dom(\widehat{\alpha}) =\{ x\in \Omega_{n} \mid x-1\in \dom(\alpha)\}$, and $x\widehat{\alpha} =(x-1)\alpha$ for $x\in \dom(\widehat{\alpha})$. 
It is easy to check that $\widehat{\alpha}\in N(\mathcal{PIO}_n^-)$, for all  $\alpha\in \mathcal{PIO}_{n-1}^{-}$. 
Moreover, it is routine to show that the mapping $\mathcal{PIO}_{n-1}^-\rightarrow  N(\mathcal{PIO}_{n}^{-})$, 
$\alpha\mapsto\widehat{\alpha}$, is a bijection. 
Thus, by Theorem \ref{12}, $\lvert N(\mathcal{PIO}_n^-) \rvert= \lvert \mathcal{PIO}_{n-1}^- \rvert=2^{n+1}-\frac{(n+1)(n+2)}{2}$.
\end{proof}  

Let $S$ be a finite semigroup $S$ with zero $0$. 
It is well known that $S$ is nilpotent if and only if $S^m=\{0\}$, for some positive integer $m$. 
Therefore, it is easy to prove that a non-trivial finite nilpotent semigroup $S$ is generated by $S\setminus S^2$, 
whence  $S\setminus S^2$ is the minimum generating set of $S$ 
and, consequently, $S$ has rank $\lvert S\rvert - \lvert S^{2}\rvert$. 

\begin{theorem}\label{17}
	For $n\geqslant 2$, the rank of $N(\mathcal{PIO}_{n}^{-})$ is $2^{n}-n-1$.
\end{theorem}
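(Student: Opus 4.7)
The strategy is to use the remark just preceding the theorem: since $N(\mathcal{PIO}_n^-)$ is a finite nilpotent semigroup, its rank equals $|N(\mathcal{PIO}_n^-)| - |N(\mathcal{PIO}_n^-)^2|$. By Proposition~\ref{16} we already know the first quantity, so the entire task reduces to computing $|N(\mathcal{PIO}_n^-)^2|$.

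The key lemma I would establish first is the following characterization: a transformation $\gamma \in N(\mathcal{PIO}_n^-)$ belongs to $N(\mathcal{PIO}_n^-)^2$ if and only if $x\gamma \leqslant x-2$ for every $x \in \dom(\gamma)$. Necessity is straightforward: if $\gamma = \alpha\beta$ with $\alpha, \beta \in N(\mathcal{PIO}_n^-)$, then both factors are order-decreasing with empty fixed-point set, so they are \emph{strictly} decreasing, and for $x\in\dom(\gamma)$ one has $x\gamma = (x\alpha)\beta < x\alpha < x$, giving $x\gamma \leqslant x-2$. For sufficiency, assuming $\dom(\gamma) = [p,q]$ (so necessarily $p \geqslant 3$), I would exhibit the explicit factorization using the shift-by-one: take $\alpha$ with $\dom(\alpha) = [p,q]$, $x\alpha = x-1$, and $\beta$ with $\dom(\beta) = [p-1,q-1]$, $y\beta = (y+1)\gamma$. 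A routine check confirms $\alpha,\beta \in N(\mathcal{PIO}_n^-)$ (intervals as domains and images, order-preserving, strictly decreasing) and $\alpha\beta = \gamma$. The zero $0_n$ is handled trivially as $0_n = 0_n\cdot 0_n$.

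With this characterization at hand, I would invoke the bijection $\alpha \mapsto \widehat{\alpha}$ from Proposition~\ref{16}. Under this bijection, $x\widehat{\alpha} \leqslant x-2$ for all $x \in \dom(\widehat{\alpha})$ translates, setting $y = x-1$, into $y\alpha \leqslant y-1 < y$ for all $y \in \dom(\alpha)$, that is, into $\alpha$ being fixed-point-free, equivalently $\alpha \in N(\mathcal{PIO}_{n-1}^-)$. Hence the bijection $\mathcal{PIO}_{n-1}^- \to N(\mathcal{PIO}_n^-)$ restricts to a bijection $N(\mathcal{PIO}_{n-1}^-) \to N(\mathcal{PIO}_n^-)^2$, giving
$$
\bigl|N(\mathcal{PIO}_n^-)^2\bigr| \;=\; \bigl|N(\mathcal{PIO}_{n-1}^-)\bigr| \;=\; 2^n - \tfrac{n(n+1)}{2}
$$
by another application of Proposition~\ref{16}.

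Finally, subtracting from $|N(\mathcal{PIO}_n^-)| = 2^{n+1} - \frac{(n+1)(n+2)}{2}$ gives
$$
\mathrm{rank}\, N(\mathcal{PIO}_n^-) \;=\; 2^{n+1} - \tfrac{(n+1)(n+2)}{2} - 2^n + \tfrac{n(n+1)}{2} \;=\; 2^n - (n+1),
$$
which is the desired formula. The main delicate point I expect is the sufficiency direction of the characterization: one must verify that the shift-by-one factorization honours \emph{all} the structural constraints of $\mathcal{PIO}_n^-$ (in particular that $\beta$ indeed has an interval image, which follows from $\im(\beta) = \im(\gamma)$, and that $p-1 \geqslant 1$, which follows from $p \geqslant 3$). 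Everything else is a direct computation using Proposition~\ref{16}.
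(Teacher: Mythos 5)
Your proof is correct and follows essentially the same route as the paper: both reduce the rank to $|N(\mathcal{PIO}_n^-)|-|N(\mathcal{PIO}_n^-)^2|$, characterize $N(\mathcal{PIO}_n^-)^2$ as the nilpotents satisfying $x\gamma\leqslant x-2$ on their domains, and count that set via a shift bijection (the paper shifts by two directly from $\mathcal{PIO}_{n-2}^-$, you shift by one from $N(\mathcal{PIO}_{n-1}^-)$ — the same count, since $|N(\mathcal{PIO}_{n-1}^-)|=|\mathcal{PIO}_{n-2}^-|$). Your explicit two-sided characterization of $N(\mathcal{PIO}_n^-)^2$, with the factorization $\gamma=\alpha\beta$, supplies the detail the paper leaves as "not difficult to show."
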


\begin{proof} 
First, notice that, for each $\alpha\in N(\mathcal{PIO}_{n}^{-})^{2}$, 
we must have $x\alpha \leqslant x-2$ for all $x\in \dom(\alpha)$, and so, in particular, 
$1,2\notin \dom(\alpha)$. 
Since $N(\mathcal{PIO}_{2}^{-})=\{0_2,\left(\begin{smallmatrix}2\\1\end{smallmatrix}\right)\}$, it is clear that $N(\mathcal{PIO}_{2}^{-})$ has rank $1=2^2-2-1$. 
Next, suppose that $n\geqslant3$. 
For $\alpha\in \mathcal{PIO}_{n-2}^{-}$, define $\ddot{\alpha}\in\mathcal{PT}_{n}$ by 
$\dom(\ddot{\alpha}) =\{ x\in \Omega_{n} \mid x-2\in \dom(\alpha)\}$, and $x\ddot{\alpha} =(x-2)\alpha$ for $x\in \dom(\ddot{\alpha})$. 
It is not difficult to show that $\ddot{\alpha}\in N(\mathcal{PIO}_n^-)^2$, for all  $\alpha\in \mathcal{PIO}_{n-2}^{-}$, 
and that  the mapping $\mathcal{PIO}_{n-2}^-\rightarrow  N(\mathcal{PIO}_{n}^{-})^2$, 
$\alpha\mapsto\ddot{\alpha}$, is a bijection. 
Thus, by Proposition \ref{16} and Theorem \ref{12}, we get 
$$
|N(\mathcal{PIO}_{n}^{-})|-|N(\mathcal{PIO}_{n}^{-})^2|=\left(2^{n+1}-\frac{(n+1)(n+2)}{2}\right) - \left( 2^{n}-\frac{n(n+1)}{2}\right) =2^{n}-n-1,
$$ 
as required. 
\end{proof}

\medskip 

To end this section, we calculate the rank of $\mathcal{PIO}_{n}^{-}$. 
Since $\mathcal{PIO}_{1}^{-}=\mathcal{PIO}_{1}$, we already noticed that this monoid has rank $1$. 
Now, for $n\geqslant 2$, recall the transformations defined in (\ref{gen}) and consider 
$A=\{a_{1},\ldots ,a_{n-1}\}$, $E=\{e_{1},\ldots,e_{n-1}\}$ and $F=\{f_{2},\ldots,f_{n}\}$. 
It has been shown in \cite[Theorem 3.4]{Fernandes:2024} that $\mathcal{IO}_{n}^{-}= \langle A \rangle$ and each element of $A$ is undecomposable element in $\mathcal{IO}_{n}^{-}$. Our goal is to show that $A\cup E\cup F$ is a generating set of $\mathcal{PIO}_{n}^{-}$ of minimum size. 

\begin{proposition}\label{14}
For $n\geqslant 2$,	$\mathcal{PIO}_{n}^{-}=\langle A\cup E\cup F \rangle$.
\end{proposition}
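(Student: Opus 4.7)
The plan is to show that any $\alpha\in\mathcal{PIO}_n^-$ can be recovered as a restriction of a total map in $\mathcal{IO}_n^-$, where the restriction is effected by pre-multiplication with a partial identity built from $E\cup F$. Since $\mathcal{IO}_n^-=\langle A\rangle$ by \cite[Theorem 3.4]{Fernandes:2024}, this will yield $\alpha\in\langle A\cup E\cup F\rangle$.

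I would first dispose of the trivial cases: $0_n=e_1f_2$, since $\dom(e_1)=\{1\}$ and $\dom(f_2)=[2,n]$ are disjoint; and if $\dom(\alpha)=[1,n]$ then $\alpha\in\mathcal{IO}_n^-=\langle A\rangle$ directly. Otherwise, write $\dom(\alpha)=[p,q]$ with $(p,q)\neq(1,n)$, set $r=p\alpha$ and $s=q\alpha$, and define $\beta\colon\Omega_n\to\Omega_n$ piecewise: $x\beta=x$ on $[1,r-1]$, $x\beta=r$ on $[r,p-1]$, $x\beta=x\alpha$ on $[p,q]$, and $x\beta=s$ on $[q+1,n]$. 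A direct check at each of the three interior boundaries gives monotonicity; the order-decreasing property holds segment by segment (using $x\geqslant r$ on the second piece, $x\alpha\leqslant x$ on the third, and $x>q\geqslant s$ on the fourth); and the image is $[1,r-1]\cup\{r\}\cup[r,s]\cup\{s\}=[1,s]$, an interval. Hence $\beta\in\mathcal{IO}_n^-=\langle A\rangle$.

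Finally, observe that the partial identity $u$ on $[p,q]$ equals $e_qf_p$ when $p\geqslant 2$ and $q\leqslant n-1$, equals $e_q$ when $p=1$ (forcing $q\leqslant n-1$ here), and equals $f_p$ when $q=n$ (forcing $p\geqslant 2$). In every case $u\in\langle E\cup F\rangle$, and $u\beta=\alpha$ since $u$ acts as the identity on $[p,q]$ and is undefined elsewhere, so $(u\beta)(x)=x\beta=x\alpha$ precisely on $[p,q]$. The main obstacle is the choice of extension $\beta$: the natural candidate $\overline{\alpha}$ from (\ref{e2}) is typically not order-decreasing on the prefix below $\dom(\alpha)$ whenever $p\alpha>1$, so one must insert the identity-then-plateau pattern below $[p,q]$ to simultaneously enforce monotonicity, order-decreasingness, and the interval-image condition.
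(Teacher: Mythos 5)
Your proof is correct and follows essentially the same route as the paper: factor $\alpha$ as a partial identity from $\langle E\cup F\rangle$ (handling $0_n=e_1f_2$ and the total case separately) followed by a total order-decreasing extension of $\alpha$ lying in $\mathcal{IO}_n^-=\langle A\rangle$. The paper's extension uses $x\mapsto\max\{1,\min(\im(\alpha))-\min(\dom(\alpha))+x\}$ below $\dom(\alpha)$ rather than your identity-then-plateau, and writes $\alpha=e_if_j\beta$ uniformly via the convention $e_n=f_1=1_n$, but these are only cosmetic differences.
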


\begin{proof}  
First, observe that  $0_{n} =e_{1}f_{2}\in \langle A\cup E\cup F\rangle$. Then, take $\alpha\in \mathcal{PIO}_n^-\setminus\{0_n\}$ and define 
a transformation $\beta\in\mathcal{T}_n$  by
\begin{eqnarray*}
	x\beta &=& \left\{ \begin{array}{cl} 
		x\alpha & \mbox{if $x\in\dom(\alpha)$} \\
		\max\{1,\min(\im(\alpha))-\min(\dom(\alpha))+x\} & \mbox{if $x<\min(\dom(\alpha))$} \\
		\max(\im(\alpha))  &\mbox{if $ x>\max(\dom(\alpha))$}
	\end{array}\right.
\end{eqnarray*}
for all $x\in \Omega_{n}$. It is easy to check that $\beta\in\mathcal{IO}_{n}^{-}$, whence $\beta\in\langle A\rangle$. 
On the other hand, it is clear that $\alpha=e_if_j\beta$, with $j=\min(\dom( \alpha))$ and $i=\max(\dom(\alpha))$, 
where $e_n=f_1=1_n$, 
from which follows that $\alpha \in \langle A\cup E\cup F\rangle$, as required. 
\end{proof}  

\begin{theorem}\label{15}
	For $n\geqslant 2$, the rank of the monoid $\mathcal{PIO}_{n}^{-}$ is $3n-3$. 
\end{theorem}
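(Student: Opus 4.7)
The plan is to pair the upper bound $\mathrm{rank}(\mathcal{PIO}_n^-) \leqslant 3n-3$, which follows from Proposition \ref{14} together with a quick distinctness check for $A \cup E \cup F$ (the $a_i$'s are distinguished from $E \cup F$ by having $\dom = \Omega_n$, and within each of $A$, $E$, $F$ the elements are separated by kernel or by domain), with a lower bound obtained by proving that every element of $A \cup E \cup F$ is undecomposable in $\mathcal{PIO}_n^-$. Since none of these $3n-3$ generators equals $1_n$, any generating set of $\mathcal{PIO}_n^-$ must contain all of them, giving $\mathrm{rank} \geqslant 3n-3$.

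For the partial identity $e_i$, I would start from an assumed decomposition $e_i = \alpha\beta$ with $\alpha, \beta \in \mathcal{PIO}_n^- \setminus \{e_i\}$. The inclusion $[1, i] \subseteq \dom(\alpha)$ together with the interval hypothesis forces $\dom(\alpha) = [1, q_\alpha]$ with $q_\alpha \geqslant i$, and order-decreasingness squeezes $\alpha(x) = \beta(x) = x$ on $[1, i]$, yielding analogously $\dom(\beta) = [1, q_\beta]$ with $q_\beta \geqslant i$. If $q_\alpha = i$ then $\alpha = e_i$, a contradiction; otherwise $\alpha(i+1) \in \{i, i+1\}$ and must lie outside $\dom(\beta) = [1, q_\beta]$ (since $i+1 \notin \dom(e_i)$), which rules out $\alpha(i+1)=i$ and forces $q_\beta = i$, i.e.\ $\beta = e_i$, again a contradiction. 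The argument for $f_{i+1}$ is entirely symmetric.

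For $a_i$, I would use image-size restrictions: the inclusions $\im(\alpha\beta) \subseteq \im(\beta)$ and $|\im(\alpha\beta)| \leqslant |\im(\alpha)|$ force $|\im(\alpha)|,|\im(\beta)| \geqslant n-1$, while $\dom(\alpha\beta) \subseteq \dom(\alpha)$ forces $\dom(\alpha)=\Omega_n$. A brief classification shows that the elements of $\mathcal{PIO}_n^-$ of image size at least $n-1$ form a short explicit list---namely $1_n$ together with the $a_j$'s and a handful of bijections between intervals of size $n-1$. Restricting $\alpha$ to those of full domain leaves $\alpha \in \{1_n, a_1, \ldots, a_{n-1}\}$, after which I would verify each case directly, using that $a_j a_k$ has image of size $n-2$ (so cannot equal $a_i$), that $a_j e_{n-1} = a_j$, and that composing $a_j$ with any non-full-domain element yields a transformation with $\dom \subsetneq \Omega_n$. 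The main obstacle is ensuring the classification of image-size-$(n-1)$ elements is complete---in particular one must not overlook the order-shift $[2,n] \ni x \mapsto x-1$, even though it does not yield a new decomposition---but beyond that the enumeration is routine.
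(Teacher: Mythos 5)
Your overall strategy is the paper's: the upper bound comes from Proposition \ref{14}, and the lower bound from showing every element of $A\cup E\cup F$ is undecomposable (the paper's preliminary remark that generating sets contain all undecomposables then finishes it). Your arguments for $e_i$ and $f_{i+1}$ coincide step for step with the paper's. For $a_i$ you diverge: the paper reduces to the case $\beta\in\mathcal{IO}_n^-$ and invokes the already-known undecomposability of $a_i$ in $\mathcal{IO}_n^-$ from \cite{Fernandes:2024}, handling the remaining case ($\beta\notin\mathcal{IO}_n^-$, which forces $\beta=e_{n-1}$) in three lines; you instead enumerate all elements of $\mathcal{PIO}_n^-$ of image size at least $n-1$ (correctly: $1_n$, the $a_j$, $e_{n-1}$, $f_2$ and the shift $x\mapsto x-1$ on $[2,n]$) and check cases. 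That is more self-contained but costs you two literally false intermediate claims: $a_ja_{n-1}=a_j$ has image of size $n-1$, not $n-2$, so ``$a_ja_k$ has image of size $n-2$'' fails for $k=n-1$; and $a_je_{n-1}=a_j$ has full domain even though $e_{n-1}$ does not, so ``composing $a_j$ with any non-full-domain element shrinks the domain'' fails for $e_{n-1}$ (which you do treat separately, but the blanket statement should be restricted to $f_2$ and the shift). Neither slip damages the conclusion --- in both exceptional cases the factorization forces $\alpha=a_j=a_i$, which is exactly what undecomposability permits --- but both cases must be stated and resolved rather than dismissed. With those two patches the proof is complete.
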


\begin{proof}
In view of Proposition \ref{14}, it suffices to show that each element of $A\cup E \cup F$ is undecomposable in $\mathcal{PIO}_{n}^{-}$. 
Let $1\leqslant i\leqslant n-1$. 

Suppose that $a_i=\alpha\beta$, for some $\alpha,\beta\in\mathcal{PIO}_{n}^{-}$. Then, 
$\dom(a_i)\subseteq\dom(\alpha)$ and $|\im(\alpha)|,|\im(\beta)|\geqslant n-1$. 
In particular, $\alpha\in\mathcal{IO}_{n}^{-}$. 
If $\beta\in\mathcal{IO}_{n}^{-}$, then $\alpha=a_i$ or $\beta=a_i$, since $a_i$ is undecomposable in $\mathcal{IO}_{n}^{-}$. 
So, suppose that $\beta\not\in\mathcal{IO}_{n}^{-}$. It follows that $\alpha\neq 1_n$ and so $\im(\alpha)=[1,n-1]$, which implies that 
 $\dom(\beta)=[1,n-1]$ and so $\im(\beta)=[1,n-1]$. Hence, $\beta=e_{n-1}$ and so $\alpha=a_i$. 
 Thus, we proved that $a_i$ is undecomposable. 
 
Next, suppose that $e_i=\alpha\beta$, for some $\alpha,\beta\in\mathcal{PIO}_{n}^{-}$. Then, 
$[1,i]\subseteq\dom(\alpha)$ and, for all $x\in[1,i]$, 
$x=(x)e_i=x\alpha\beta\leqslant x\alpha\leqslant x$, whence $x\alpha=x=x\beta$. 
In particular, this implies that $[1,i]\subseteq\dom(\beta)$.  
If $i+1\not\in\dom(\alpha)$, then $\dom(\alpha)=[1,i]$ and so $\alpha=e_i$. 
So, suppose that $i+1\in\dom(\alpha)$. If $(i+1)\alpha=i$, then $i+1\in\dom(\alpha\beta)=\dom(e_i)$, 
which is a contradiction. Therefore, $(i+1)\alpha=i+1$ and so $i+1\not\in\dom(\beta)$ (otherwise, we get again $i+1\in\dom(\alpha\beta)=\dom(e_i)$, 
a contradiction). Hence, $\dom(\beta)=[1,i]$ and so $\beta=e_i$. 
Thus, we proved that $e_i$ is undecomposable. Similarly, we can prove $f_{i+1}$ is undecomposable. 
\end{proof}

\section{A presentation for $\mathcal{PIO}_n^-$} 

Throughout this section we consider $n\geqslant2$. 

\smallskip 

Recall that,  by \cite[Theorem 3.4]{Fernandes:2024}, $A$ is a generating set of $\mathcal{IO}_{n}^{-}$ of minimum size. Moreover, by considering the set $A$ as an alphabet with $n-1$ letters and the set of $\frac{1}{2}(n^2-n)$ monoid relations 
\begin{description}
	
	\item $(R_1)$ $a_ia_{n-1}=a_i$, for $1\leqslant i\leqslant n-1$, and  

         \item\hspace*{22pt} $a_ia_j=a_{j+1}a_i$, for $1\leqslant i\leqslant j\leqslant n-2$,  
\end{description} 
by \cite[Theorem 3.6]{Fernandes:2024}, the presentation $\langle A\mid R_1\rangle$ defines the monoid $\mathcal{IO}_{n}^{-}$.

\smallskip 

Now, let us consider the set $A\cup E\cup F$ as an alphabet with $3n-3$ letters and let $R$ be the set 
of size $\frac{1}{2}(5n^2+9n-20)$ formed by $R_1$ and the following monoid relations:

\begin{description}
	
	\item  $(R_2)$ $e_{i}^2=e_{i}$ and $f_{i+1}^2=f_{i+1}$, for $1\leqslant i\leqslant n-1$; 
	
	\item  $(R_3)$ $e_if_j=f_je_i$, for $2\leqslant j\leqslant i\leqslant n-1$; 
	
	\item\hspace*{22pt} $e_{i}f_{i+1}=f_{i+1}e_{i}=e_1f_n$, for $1\leqslant i \leqslant n-1$; 
	
	\item\hspace*{22pt} $e_ie_{i+1}=e_{i+1}e_i=e_i$, for $1\leqslant i\leqslant n-2$; 
	
	\item\hspace*{22pt} $f_if_{i+1}=f_{i+1}f_i=f_{i+1}$, for $2\leqslant  i\leqslant n-1$; 
	
	\item  $(R_4)$  $a_{i}e_{i-1}=e_{i-1}$, for $2\leqslant i\leqslant n-1$; 
	
	\item\hspace*{22pt} $a_ie_j=e_{j+1}a_i$, for $1\leqslant i\leqslant j\leqslant n-2$; 
	
	\item\hspace*{22pt} $a_{n-1}e_{n-1}=a_{n-1}$ and $e_ia_i=e_i$, for $1\leqslant i\leqslant n-1$; 

	\item  $(R_5)$  $a_if_j=f_ja_i$, for $2\leqslant j\leqslant i\leqslant n-1$; 
	
	\item\hspace*{22pt} $a_if_j=f_{j+1}a_i$, for $1\leqslant i < j\leqslant n-1$; 
	
	\item\hspace*{22pt} $a_{1}f_n=e_1f_n$ and $a_{n-1}f_n=e_1f_n$; 
	
	\item  $(R_6)$  $f_{i+1}a_i=f_{i+1}a_1$, for $2\leqslant i\leqslant n-1$.  
	
\end{description}

\smallskip

We can immediately derive from $R$ the following relations. 

\begin{lemma}\label{morerel}
$\nonumber$ One has: 
\begin{description}
	\item  $(R'_3)$ $e_if_j \sim_R f_je_i$, for $1\leqslant i\leqslant n-1$ and $2\leqslant j\leqslant n$; 

         \item\hspace*{22pt} 	$e_{i}f_{j} \sim_R e_{1}f_{n}$, for $1\leqslant i<j \leqslant n$; 
         
         \item\hspace*{22pt} $e_ie_j \sim_R e_je_i \sim_R e_i$, for $1\leqslant i < j\leqslant n-1$; 
         
         \item\hspace*{22pt} $f_if_j \sim_R f_jf_i \sim_R f_j$, for $2\leqslant i < j\leqslant n$; 
         
         \item  $(R'_4)$ $a_ie_j \sim_R e_j$, for $1\leqslant j <i\leqslant n-1$; 
                  
         \item\hspace*{22pt} $e_ja_i\sim_R e_j$, for $1\leqslant j \leqslant i\leqslant n-1$; 
          
         \item\hspace*{22pt} $a_ie_{n-1} \sim_R a_i$, for $1\leqslant i\leqslant n-1$;
         
         \item  $(R'_5)$ $a_if_n \sim_R e_1f_n$, for $1\leqslant i\leqslant n-1$; 
         
         \item  $(R'_6)$ $f_ja_i \sim_R f_ja_1$, for $2\leqslant i <j \leqslant n$. 
\end{description} 
\end{lemma}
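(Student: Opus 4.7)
The plan is to derive the listed relations in an order where each depends on the earlier ones, with the main difficulty being to coax the right-absorbing behaviour of $e_1f_n$ out of the given relations.

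First I establish items~3 and~4 of $(R'_3)$, namely $e_ie_j\sim_R e_je_i\sim_R e_i$ for $i<j$ and $f_if_j\sim_R f_jf_i\sim_R f_j$ for $i<j$, by induction on $j-i$. The base case $j-i=1$ is $R_3$. For the inductive step, rewrite $e_i\sim_R e_ie_{i+1}$ (from $R_3$) to get $e_ie_j\sim_R e_ie_{i+1}e_j\sim_R e_ie_{i+1}\sim_R e_i$ using the inductive hypothesis on $e_{i+1}e_j$; the other orderings and the $f$-analogues are handled identically.

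Next, the identities in $(R'_4)$: for $a_ie_j\sim_R e_j$ (with $j<i$) and $e_ja_i\sim_R e_j$ (with $j\leqslant i$) I induct on $i-j$, with bases $a_ie_{i-1}=e_{i-1}$ and $e_ia_i=e_i$ from $R_4$, and for the step use the rewrites $e_j\sim_R e_{j+1}e_j$ and $e_j\sim_R e_je_{j+1}$ (both from $R_3$) together with the inductive hypothesis on $a_ie_{j+1}$ or $e_{j+1}a_i$. The relation $a_ie_{n-1}\sim_R a_i$ (for $i<n-1$) follows from $a_i\sim_R a_ia_{n-1}$ ($R_1$) and $a_{n-1}e_{n-1}=a_{n-1}$ ($R_4$). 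I then prove item~2 of $(R'_3)$ for $i<j$: $e_if_j\sim_R e_i(f_{i+1}f_j)\sim_R (e_if_{i+1})f_j\sim_R (e_1f_n)f_j\sim_R e_1(f_nf_j)\sim_R e_1f_n$, using the $f$-absorption, $R_3$, and $R_2$ when $j=n$. Relation $(R'_5)$ for $1<i<n-1$ is then $a_if_n\sim_R a_ia_{n-1}f_n\sim_R a_i(e_1f_n)\sim_R (a_ie_1)f_n\sim_R e_1f_n$, by $R_1$, $R_5$ and $(R'_4)$; and $(R'_6)$ for $j>i+1$ is $f_ja_i\sim_R f_jf_{i+1}a_i\sim_R f_j(f_{i+1}a_1)\sim_R (f_jf_{i+1})a_1\sim_R f_ja_1$, via item~4 of $(R'_3)$ and $R_6$.

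The main obstacle is then the remaining piece of $(R'_3)$ item~1, namely $f_je_i\sim_R e_1f_n$ for $1\leqslant i<j-1$. For this I first establish right-absorption: $(e_1f_n)e_i\sim_R e_1f_n$ for all $1\leqslant i\leqslant n-1$. The trick is to pivot through the alternative form $e_1f_n\sim_R f_2e_1$ from $R_3$, giving $(e_1f_n)e_i\sim_R (f_2e_1)e_i=f_2(e_1e_i)\sim_R f_2e_1\sim_R e_1f_n$, where $e_1e_i\sim_R e_1$ follows from item~3 of $(R'_3)$ for $i\geqslant 2$ and from $R_2$ for $i=1$. Then, for $i<j-1$, rewriting $e_i\sim_R e_{j-1}e_i$ (by item~3 of $(R'_3)$) and using $f_je_{j-1}=e_1f_n$ from $R_3$ yields $f_je_i\sim_R (f_je_{j-1})e_i\sim_R (e_1f_n)e_i\sim_R e_1f_n$. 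With this, item~1 of $(R'_3)$ is immediate: for $2\leqslant j\leqslant i$ it is $R_3$; for $j=i+1$ it is $R_3$ again; and for $j>i+1$, both $e_if_j$ and $f_je_i$ reduce to $e_1f_n$.
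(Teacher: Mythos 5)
Your proof is correct and follows essentially the same route as the paper: establish the absorption relations $e_ie_j\sim_R e_i$ and $f_if_j\sim_R f_j$ by induction on the index gap, then derive the remaining relations by inserting a suitable idempotent and applying one relation from $R$. The only substantive difference is that you make explicit the right-absorption $(e_1f_n)e_i\sim_R e_1f_n$ (via the pivot $e_1f_n\sim_{R_3} f_2e_1$) needed to reduce $f_je_i$ to $e_1f_n$ when $i<j-1$ --- a step the paper leaves implicit under ``an analogous process of induction'' --- and this is a correct and welcome clarification.
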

\begin{proof}
($R'_3$) First, let us show by induction on $k$ that $e_ie_{i+k} \sim_R e_{i+k}e_i \sim_R e_i$, 
for $1\leqslant i\leqslant n-2$ and $1\leqslant k\leqslant n-1-i$. 
For $k=1$, the relations follow directly from $R_3$. Now, admit that the relations are valid for some $k\geqslant1$ (and $k< n-1-i$).  
Then, by applying the induction hypothesis in the first and third steps, we get 
$$
e_ie_{i+k+1} \sim_R  e_i e_{i+k} e_{i+k+1}  \sim_{R_3} e_i e_{i+k}  \sim_R  e_i 
$$
and, similarly, $e_{i+k+1}e_i \sim_R e_i$. 
Thus, we have $e_ie_j \sim_R e_je_i \sim_R e_i$, for $1\leqslant i < j\leqslant n-1$. 

By an analogous process of induction, we can also show that 
$f_if_j \sim_R f_jf_i \sim_R f_j$, for $2\leqslant i < j\leqslant n$, 
and $e_{i}f_{j} \sim_R f_je_i \sim_R e_{1}f_{n}$, for $1\leqslant i<j \leqslant n$. 

Finally, observe that we have also obtained for free that $e_if_j \sim_R f_je_i$, for $1\leqslant i\leqslant n-1$ and $2\leqslant j\leqslant n$. 

\smallskip 

($R'_4$) Let  $1\leqslant j <i\leqslant n-1$. Then, by $R'_3$ and $R_2$, we have $e_{i-1}e_j\sim_R e_j$, whence
$$
a_ie_j\sim_R a_i e_{i-1}e_j \sim_{R_4} e_{i-1}e_j \sim_R e_j. 
$$

Next, let $1\leqslant j \leqslant i\leqslant n-1$. Then, by $R'_3$ and $R_2$, we have $e_je_i\sim_R e_j$ and so 
$$
e_ja_i\sim_R e_je_ia_i \sim_{R_4} e_je_i \sim_R e_j. 
$$

At last, let $1\leqslant i\leqslant n-1$. Then, 
$$
a_ie_{n-1}\sim_{R_1}a_ia_{n-1}e_{n-1}\sim_{R_4}a_ia_{n-1}\sim_{R_1} a_i. 
$$

\smallskip 

($R'_5$)   Let  $1\leqslant i\leqslant n-1$. 
Since $a_{1}f_n\sim_{R_5} e_1f_n$, we can suppose $2\leqslant i\leqslant n-1$. 
Then, by $R'_4$, we have $a_ie_1\sim_R e_1$ and so 
$$
a_if_n\sim_{R_1}a_ia_{n-1}f_n \sim_{R_5}a_ie_1f_n \sim_R e_1f_n. 
$$

\smallskip 

($R'_6$)   Let  $2\leqslant i < j \leqslant n$. Then, by $R'_3$ and $R_2$, we have $f_jf_{i+1}\sim_R f_j$, whence 
$$
f_ja_i\sim_R f_jf_{i+1}a_i \sim_{R_6} f_jf_{i+1}a_1 \sim_R f_ja_1,
$$
as required.
\end{proof} 
 
\smallskip 

Let $\varphi:(A\cup E\cup F)^*\longrightarrow \mathcal{PIO}_{n}^-$ be the homomorphism of monoids 
that extends the mapping $A\cup E\cup F\longrightarrow \mathcal{PIO}_{n}^-$ defined by $a_{i}\mapsto a_{i}$, $e_{i}\mapsto e_{i}$ and $f_{i+1}\mapsto f_{i+1}$,  
for $1\leqslant i\leqslant n-1$. 

\smallskip 

It is a routine matter to prove the following lemma: 

\begin{lemma} \label{satisfies} 
The generating set $A\cup E\cup F$ of $\mathcal{PIO}_{n}^-$ satisfies (via $\varphi$) all relations from $R$. 
\end{lemma}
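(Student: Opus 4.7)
The plan is to verify each relation in $R$ directly, by composing the explicit partial transformations specified in (\ref{gen}). Since the relations in $R_1$ are already known to be satisfied by $A$ in $\mathcal{IO}_{n}^-$ by \cite[Theorem 3.6]{Fernandes:2024}, and $\mathcal{IO}_{n}^-\subseteq\mathcal{PIO}_{n}^-$, it suffices to handle $R_2$ through $R_6$.

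The underlying computation is uniform throughout. From (\ref{gen}) one reads off
\[
\dom(e_i)=[1,i],\quad \dom(f_{i+1})=[i+1,n],\quad \dom(a_i)=\Omega_n,
\]
with $e_i$ and $f_{i+1}$ acting as the identity on their domains, and $a_i$ acting as the identity on $[1,i]$ and as the left-shift $x\mapsto x-1$ on $[i+1,n]$. For any composite one then uses $\dom(\alpha\beta)=\{x\in\dom(\alpha)\mid x\alpha\in\dom(\beta)\}$ and compares the pointwise actions on the resulting common domain.

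First I would dispatch the idempotency relations $R_2$, which are immediate, and the relations among the $e_i$'s and $f_j$'s in $R_3$, which reduce to the set identities $[1,i]\cap[1,j]=[1,\min(i,j)]$, $[i,n]\cap[j,n]=[\max(i,j),n]$, and $[1,i]\cap[j,n]=[j,i]$ (empty when $j>i$); in particular, $e_if_{i+1}$, $f_{i+1}e_i$, and $e_1f_n$ all have empty composite domain and so coincide with $0_n$. Next I would tackle $R_4$ and $R_5$: on each of the subintervals $[1,i]$ and $[i+1,n]$ inside $\dom(a_i)$ the composite acts either identically or as $x\mapsto x-1$, and a short case analysis on which subinterval survives the composition produces the desired equality on both sides. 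Finally, $R_6$ reduces to the observation that on $[i+1,n]\subseteq[2,n]$ both $a_i$ and $a_1$ act as $x\mapsto x-1$, so $f_{i+1}a_i=f_{i+1}a_1$.

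There is no substantial obstacle; the only point needing mild care is the pair of identities $a_1f_n=a_{n-1}f_n=e_1f_n$, all of which equal $0_n$ because $n\notin\im(a_1)\cup\im(a_{n-1})$ and $\{1\}\cap\{n\}=\emptyset$, so one must check empty-composite equalities rather than pointwise equalities. Once all cases have been verified, $\varphi$ carries each relation of $R$ to an equality in $\mathcal{PIO}_{n}^-$, which proves the lemma.
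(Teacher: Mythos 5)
Your verification is correct and is exactly the routine computation the paper has in mind (the paper omits the proof entirely, declaring it "a routine matter"): each relation reduces to comparing domains via $\dom(\alpha\beta)=\{x\in\dom(\alpha)\mid x\alpha\in\dom(\beta)\}$ and pointwise actions, with the degenerate cases $e_if_{i+1}=f_{i+1}e_i=e_1f_n=a_1f_n=a_{n-1}f_n=0_n$ handled as empty-domain equalities, just as you note. Your reduction of $R_1$ to the cited presentation of $\mathcal{IO}_n^-$ is also sound.
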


\smallskip 

Next, recall that 
$$
W^-=\{a_{i_k}\cdots a_{i_1}\in A^*\mid 0\leqslant k\leqslant n-1,~ 1\leqslant i_1<\cdots<i_k\leqslant n-1\}
$$
is a set of canonical forms of $\mathcal{IO}_{n}^-$; see \cite{Fernandes:2024}. 

Let $f_1$ and $e_n$ be both the empty word of $(A\cup E\cup F)^*$. 
Then, we have the following:

\begin{lemma} \label{funda} 
Let $w\in (A\cup E\cup F)^*$. Then, $w\sim_R e_i f_j a_{i_k}\cdots a_{i_1}$, 
for some $0\leqslant k\leqslant n-1$, $1\leqslant i_1<\cdots<i_k\leqslant n-1$ and $1\leqslant j\leqslant i\leqslant n$ or $(i,j)=(1,n)$. 
\end{lemma}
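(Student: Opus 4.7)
The plan is to argue by induction on the length $|w|$. For $|w|=0$, one takes $k=0$ and $(i,j)=(n,1)$ — recall that $e_n$ and $f_1$ are both the empty word — which satisfies $1\leqslant j\leqslant i\leqslant n$. For $|w|\geqslant 1$, I would write $w=w'x$ with $x\in A\cup E\cup F$ and apply the induction hypothesis to $w'$, giving $w'\sim_R e_{i_0}f_{j_0}a_{i_k}\cdots a_{i_1}$ with $(i_0,j_0)$ satisfying the required constraint and $1\leqslant i_1<\cdots<i_k\leqslant n-1$. The argument then splits into three cases depending on $x$. If $x=a_m$, the tail $a_{i_k}\cdots a_{i_1}a_m\in A^*$ can be rewritten modulo $R_1$ alone to a canonical element of $W^-$, by the presentation of $\mathcal{IO}_n^-$ in \cite[Theorem 3.6]{Fernandes:2024}, which immediately yields the required form for $w$.

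If $x=e_m$ (with $1\leqslant m\leqslant n-1$), I would iteratively push $e_m$ leftward through the $a$-tail. Each step applies one of: $a_{i_p}e_{m'}\sim_R e_{m'}$ when $m'<i_p$ (from $R'_4$, absorbing the $a$), $a_{i_p}e_{m'}\sim_R e_{m'+1}a_{i_p}$ when $i_p\leqslant m'\leqslant n-2$ (from $R_4$, incrementing the $e$-subscript and swapping), or $a_{i_p}e_{n-1}\sim_R a_{i_p}$ (from $R'_4$, removing the $e_{n-1}$ altogether). Since the swapping steps preserve the relative order of the surviving $a$'s, after the push we either have the original $a$-tail restricted to a subsequence (if $e_m$ was absorbed) or some $e_{m^*}$ followed by a canonical-order subsequence of the original $a$'s. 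In the latter case I would commute $e_{m^*}$ past $f_{j_0}$ via $R'_3$, merge it with $e_{i_0}$ through $e_{i_0}e_{m^*}\sim_R e_{\min(i_0,m^*)}$ from Lemma~\ref{morerel}, and — if the resulting pair $e_I f_{j_0}$ violates $j_0\leqslant I$ — collapse it to $e_1f_n$ using $e_I f_{j_0}\sim_R e_1f_n$ (Lemma~\ref{morerel}).

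If $x=f_m$ (with $2\leqslant m\leqslant n$), I would proceed symmetrically, pushing $f_m$ leftward through the $a$-tail by $R_5$: each swap gives either $f_{m'}a_{i_p}$ (when $m'\leqslant i_p$) or $f_{m'+1}a_{i_p}$ (when $i_p<m'\leqslant n-1$). If during this process the $f$-subscript reaches $n$, the rule $a_{i_p}f_n\sim_R e_1f_n$ from $R'_5$ injects an $e_1$ which then absorbs every remaining $a$ to its left via $R'_4$ — all such $a$'s have index $\geqslant 2$ because the consumed one had the smallest surviving index in the strictly increasing block — collapsing the entire suffix to $e_1f_n$ and, in turn, reducing the whole word to $e_1f_n$ via $R'_3$. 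Otherwise the push completes with some $f_{m^*}$ on the left, which I commute past $e_{i_0}$ using $R'_3$, merge with $f_{j_0}$ via $f_{j_0}f_{m^*}\sim_R f_{\max(j_0,m^*)}$ from Lemma~\ref{morerel}, and then enforce the $(i,j)$-constraint exactly as in the previous case.

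The main technical obstacle is the index bookkeeping during the iterated push, particularly in the $x=f_m$ case where the moving $f$-subscript can grow and may trigger the $e_1f_n$-collapse midway through the tail; once the cases are organised, however, all rewrites needed are already collected in $R$ and Lemma~\ref{morerel}, so the verification is routine.
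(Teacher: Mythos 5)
Your argument is correct and is essentially the paper's proof in a more explicit form: the paper likewise uses $R_4$, $R_5$, $R'_4$, $R'_5$ to move all letters of $E\cup F$ to the left of the letters of $A$, then normalises the $A$-suffix via the presentation $\langle A\mid R_1\rangle$ of $\mathcal{IO}_n^-$ and its canonical forms $W^-$, and the $(E\cup F)$-prefix via $R_2$, $R_3$, $R'_3$. Your induction on $|w|$ with the letter-by-letter push is just a fully spelled-out version of that same two-stage normalisation, and the index bookkeeping you flag does go through.
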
 

\begin{proof}
It is clear from $R_4$, $R_5$, $R'_4$ and $R'_5$ and that there exist $u\in (E\cup F)^*$ and $v\in A^*$ such that $w\sim_R uv$. 
Since $W^-$ is a set of canonical forms of $\mathcal{IO}_{n}^-$ and $\langle A\mid R_1\rangle$ is a presentation for $\mathcal{IO}_{n}^-$, there exist $0\leqslant k\leqslant n-1$ and $1\leqslant i_1<\cdots<i_k\leqslant n-1$ such that $v\sim_R a_{i_k}\cdots a_{i_1}$. 
Moreover, it is also clear from $R_2$, $R_3$ and $R'_3$ that $u\sim_R e_if_j$ for some $1\leqslant i,j\leqslant n$. 
If $i<j$, then it follows from $R'_3$ that $e_if_j\sim_R e_1f_n$. Otherwise, we have $1\leqslant j\leqslant i\leqslant n$, as required.  
\end{proof} 

Let $w\in (A\cup E\cup F)^*$ and $\alpha=w\varphi$. Suppose that $w\sim_R e_i f_j a_{i_k}\cdots a_{i_1}$, with  
$0\leqslant k\leqslant n-1$, $1\leqslant i_1<\cdots<i_k\leqslant n-1$ and $1\leqslant j\leqslant i\leqslant n$ or $(i,j)=(1,n)$. Then, 
$\alpha=1_n$ if and only if $w$ is the empty word; 
$\alpha=0_n$ if and only if $(i,j)=(1,n)$; 
and if $1\leqslant j\leqslant i\leqslant n$, then $\dom\alpha=[j,i]$. Observe also that, for $k\geqslant1$, 
$$
\mbox{\small	
$\begin{array}{c}
\hspace{-6mm}	(a_{i_k}\cdots a_{i_1})\varphi = 
	\left(\begin{array}{ccc|ccc|c|ccc|c} 
		1 &\cdots & i_1 & i_1+1 &\cdots &  i_2  &\cdots &  i_t+1  &\cdots &  i_{t+1}  &\cdots \\
		1 &\cdots & i_1 &  i_1  &\cdots & i_2-1 &\cdots & i_t-t+1 &\cdots & i_{t+1}-t &\cdots
	\end{array}\right.\vspace{2mm} \\ 
\hspace{42mm}	 \left. \begin{array}{c|ccc|ccc}
		\cdots &  i_{k-1}+1  &\cdots &   i_k   &  i_k+ 1 &\cdots & n\\
		\cdots & i_{k-1}-k+2 &\cdots & i_k-k+1 & i_k-k+1 &\cdots & n-k
	\end{array}\right),
\end{array}$ 
} 
$$
i.e., with $i_0=0$ and $i_{k+1}=n$, we have 
\begin{equation}\label{e12}
\mbox{$(x)(a_{i_k}\cdots a_{i_1})\varphi =x-t$, for $x\in [i_t+1,i_{t+1}]$ and $0\leqslant t\leqslant k$}.  
\end{equation}

\begin{lemma} \label{aux} 
If $1\leqslant i_1<\cdots<i_t <j \leqslant n$, then $f_j a_{i_t}\cdots a_{i_1}\sim_R f_ja_1^t$. 
\end{lemma}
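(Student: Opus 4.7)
The plan is to induct on the nonnegative integer $S=\sum_{l=1}^{t}i_l-\binom{t+1}{2}$, which measures how far the tuple $(i_1,\ldots,i_t)$ sits above the minimal admissible tuple $(1,2,\ldots,t)$; note $i_l\geqslant l$ because the indices are strictly increasing positive integers. The two tools used throughout are $R'_6$ (to turn a leftmost $f_ja_{i_t}$ into $f_ja_1$) and the commutations in $R_1$ (to sweep a single $a_1$ rightward past other $a$-letters).

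In the base case $S=0$ we have $(i_1,\ldots,i_t)=(1,2,\ldots,t)$, and the claim reduces to $f_ja_ta_{t-1}\cdots a_1\sim_R f_ja_1^t$. I would first establish, by a short sub-induction on $t$ using only $R_1$, that $a_1^t\sim_{R_1}a_ta_{t-1}\cdots a_1$: assuming this for $t-1$, one has $a_1^t\sim_R a_1a_{t-1}\cdots a_1$, and repeated application of $a_1a_l\sim_{R_1}a_{l+1}a_1$ (legitimate since the relevant $l$ lie in $\{1,\ldots,t-1\}\subseteq\{1,\ldots,n-2\}$) collapses the word to $a_ta_{t-1}\cdots a_1$. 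Pre-multiplying by $f_j$ settles the base case.

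For the inductive step $S>0$ we have $i_t>t$, so in particular $i_t\geqslant 2$. Applying $R'_6$ to the leftmost pair gives
$$f_ja_{i_t}a_{i_{t-1}}\cdots a_{i_1}\sim_R f_ja_1a_{i_{t-1}}\cdots a_{i_1}.$$
Since $i_l\leqslant i_{t-1}\leqslant i_t-1\leqslant n-2$ for each $l<t$, repeated application of $a_1a_{i_l}\sim_{R_1}a_{i_l+1}a_1$ pushes the single $a_1$ to the far right, producing $f_ja_{i_{t-1}+1}a_{i_{t-2}+1}\cdots a_{i_1+1}a_1$. The new index sequence $(1,i_1+1,\ldots,i_{t-1}+1)$ is strictly increasing; its maximum $i_{t-1}+1\leqslant i_t<j$ still satisfies the bound of the lemma; and its $S$-value is $S-(i_t-t)<S$. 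The inductive hypothesis then yields $f_ja_1^t$, completing the step.

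The subtle point that dictates this particular induction is that the obvious parameter $t$ does not decrease under the above rewriting: the transformation $(i_1,\ldots,i_t)\mapsto(1,i_1+1,\ldots,i_{t-1}+1)$ has $(1,2,\ldots,t)$ as its unique fixed point. Isolating that fixed point as the base case of an induction on $S$ — rather than on $t$ — is what makes the argument terminate.
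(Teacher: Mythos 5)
Your proof is correct, and it takes a genuinely different route from the paper's. The paper inducts on $t$: it applies $R'_6$ to turn $f_ja_{i_t}$ into $f_ja_1$, then uses $R_5$ to push that $a_1$ \emph{leftward past} $f_j$ (turning it into $f_{j-1}$), invokes the induction hypothesis for the shorter word $f_{j-1}a_{i_{t-1}}\cdots a_{i_1}$, and commutes back. You instead induct on the excess $S=\sum_l i_l-\binom{t+1}{2}$, and after the same initial $R'_6$ step you push the $a_1$ \emph{rightward through the $a$-letters} via $R_1$, which increments their indices and strictly decreases $S$; the length $t$ never changes, which is exactly why you need the nonstandard induction parameter and why your base case $(1,\ldots,t)$ requires the separate identity $a_1^t\sim_{R_1}a_ta_{t-1}\cdots a_1$. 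Your side conditions all check out: $i_t\geqslant 2$ in the inductive step so $R'_6$ applies, $i_l\leqslant i_t-1\leqslant n-2$ so the $R_1$ commutations are legal, the new tuple $(1,i_1+1,\ldots,i_{t-1}+1)$ is strictly increasing with maximum $i_{t-1}+1\leqslant i_t<j$, and $t\leqslant n-1$ makes the base case legitimate. What the paper's argument buys is brevity (three relation applications per step); what yours buys is that after the single $R'_6$ application everything happens inside the subalphabet $A$ using only $R_1$, never touching $f_j$, at the cost of the auxiliary base-case identity and the less obvious termination measure.
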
 

\begin{proof}
We will proceed by induction on $t$. For $t=1$, we have $f_ja_{i_1}\sim f_ja_1$, by $R'_6$. Let us assume that the result is valid for $t-1$, with $t\geqslant 2$. 
Let $1\leqslant i_1<\cdots<i_t <j \leqslant n$. Then, $i_{t-1}<j-1$ and so, by induction hypothesis, $f_{j-1} a_{i_{t-1}}\cdots a_{i_1}\sim_R f_{j-1}a_1^{t-1}$. 
Since $j>t\geqslant 2$, we have 
\begin{eqnarray*}
	f_j a_{i_t}a_{i_{t-1}}\cdots a_{i_1} &\sim_{R_6}& f_j a_1a_{i_{t-1}}\cdots a_{i_1}\sim_{R_5} a_1f_{j-1}a_{i_{t-1}}\cdots a_{i_1} \\ 
	&\sim_R& a_1f_{j-1}a_1^{t-1} \sim_{R_5}  f_j a_1a_1^{t-1}=f_j a_1^t,
\end{eqnarray*}
as required. 
\end{proof}

\begin{theorem}\label{presPIO-} 
The monoid $\mathcal{PIO}_{n}^-$ is defined by the presentation $\langle A\cup E\cup F\mid R\rangle$ on $3n-3$ generators and $\frac{1}{2}(5n^2+9n-20)$ relations. 
\end{theorem}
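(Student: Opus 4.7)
The plan is to apply the Guess and Prove method of Proposition~\ref{ruskuc}. Since Proposition~\ref{14} shows that $A\cup E\cup F$ generates $\mathcal{PIO}_n^-$ and Lemma~\ref{satisfies} verifies condition~$(1)$, the task reduces to exhibiting a set $W\subseteq (A\cup E\cup F)^*$ of canonical forms with $|W|\leqslant|\mathcal{PIO}_n^-|$ and showing that every word is $\sim_R$-equivalent to an element of $W$.

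For $W$ I take
\[
W=\{e_1f_n\}\cup\bigl\{e_if_ja_{i_k}\cdots a_{i_{m+1}}a_m\cdots a_1 \mid 1\leqslant j\leqslant i\leqslant n,\ 0\leqslant m\leqslant j-1,\ j\leqslant i_{m+1}<\cdots<i_k\leqslant i-1\bigr\},
\]
with the convention (already in force in the paper) that $e_n=f_1=1$, and with the suffix $a_m\cdots a_1=1$ when $m=0$. In particular the fully empty word lies in $W$ (take $i=n$, $j=1$, $m=k=0$) and will represent $1_n$, while $e_1f_n$ represents $0_n$. For each pair $(j,i)$ with $1\leqslant j\leqslant i\leqslant n$, the number of corresponding forms is $j\cdot 2^{i-j}$ (a choice of $m\in\{0,\dots,j-1\}$ and a subset of $\{j,\dots,i-1\}$), which is exactly the number of elements of $\mathcal{PIO}_n^-$ with domain $[j,i]$ as computed in the proof of Theorem~\ref{12}; summing and adding $1$ for $e_1f_n$ yields $|W|=|\mathcal{PIO}_n^-|$.

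The reduction of an arbitrary $w\in (A\cup E\cup F)^*$ to $W$ proceeds in two stages. Lemma~\ref{funda} first gives $w\sim_R e_if_ja_{i_k}\cdots a_{i_1}$ with $i_1<\cdots<i_k$ and either $(i,j)=(1,n)$ or $1\leqslant j\leqslant i\leqslant n$. In the degenerate case $(i,j)=(1,n)$, each $a_{i_s}$ with $i_s\geqslant 2$ is first turned into $a_1$ via $R'_6$, and then the chain $e_1f_na_1\sim_R f_ne_1a_1\sim_R f_ne_1\sim_R e_1f_n$ (using $R'_3$ and $R'_4$) absorbs the trailing $a_1$; iterating strips all $a$-letters, leaving $e_1f_n$. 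In the generic case, I first remove every $a_{i_s}$ with $i_s\geqslant i$: such a letter, once brought adjacent to $f_j$, crosses $f_j$ via $R_5$ and is absorbed into $e_i$ by $R'_4$, and iterating from the leftmost $a$ removes them all. Writing the remaining word as $e_if_ja_{i_q}\cdots a_{i_{m+1}}a_{i_m}\cdots a_{i_1}$ with $i_1<\cdots<i_m<j\leqslant i_{m+1}<\cdots<i_q<i$, I shuttle $f_j$ rightward past the high block $a_{i_q}\cdots a_{i_{m+1}}$ by iterated use of $R_5$, invoke Lemma~\ref{aux} on the isolated low suffix to obtain $f_ja_1^m$, rewrite $a_1^m$ as $a_m\cdots a_1$ via the consequence $a_1a_{t-1}\sim_{R_1}a_ta_1$ of~$R_1$ (a short induction on $m$), and finally shuttle $f_j$ back leftward through the high block; the result manifestly lies in $W$.

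The main obstacle I expect is the bookkeeping of the double shuttling of $f_j$: the high $a$-letters must cross $f_j$ twice while preserving their relative order, Lemma~\ref{aux} must be applied on precisely the isolated low suffix, and the auxiliary identity $a_1^m\sim_R a_m\cdots a_1$ must be derived in between. Once these steps are verified cleanly, condition~$(2)$ of Proposition~\ref{ruskuc} is established, the cardinality computation above settles condition~$(3)$, and the presentation $\langle A\cup E\cup F\mid R\rangle$ for $\mathcal{PIO}_n^-$ follows.
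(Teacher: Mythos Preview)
Your argument is correct, but it follows a genuinely different route from the paper's. The paper invokes Proposition~\ref{provingpresentation} rather than the Guess and Prove method: after reducing both $w$ and $w'$ to the form $e_if_ja_{r_k}\cdots a_{r_1}$ of Lemma~\ref{funda}, it stops at the coarser intermediate shape $e_ia_{r_q}\cdots a_{r_{p+1}}f_ja_1^p$ and then argues \emph{semantically}, using the explicit description~(\ref{e12}) of $(a_{r_k}\cdots a_{r_1})\varphi$, that when $w\varphi=w'\varphi$ one must have $p=p'$, $q=q'$ and $r_t=s_t$ for $p+1\leqslant t\leqslant q$. Your version instead pushes the rewriting one step further---deriving $a_1^m\sim_R a_m\cdots a_1$ from $R_1$ and shuttling $f_j$ back---to reach a genuine normal form, and then replaces the semantic analysis by the counting argument $|W|=|\mathcal{PIO}_n^-|$ (which you correctly match to the domain-by-domain count in Theorem~\ref{12}). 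What your approach buys is that it never needs formula~(\ref{e12}) or the slightly delicate index-comparison argument; what the paper's approach buys is that it avoids introducing an explicit canonical-form set and the extra $R_1$-identity, and it yields as a by-product the information about which indices of the $W^-$-word are actually determined by $\alpha$. The bookkeeping you flag (removing the letters $a_{i_s}$ with $i_s\geqslant i$ from the left, the two commutations of $f_j$ through the high block via $R_5$, and the inductive derivation of $a_1^m\sim_R a_m\cdots a_1$ using $a_1a_{t-1}\sim_{R_1}a_ta_1$ for $2\leqslant t\leqslant m\leqslant j-1\leqslant n-1$) all goes through as you describe.
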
 

\begin{proof}
By Lemma \ref{satisfies}, it remains to prove that the second condition of Proposition \ref{provingpresentation} is satisfied. 
Let us take $w,w'\in (A\cup E\cup F)^*$ such that $w\varphi=w'\varphi$. We aim to show that $w\sim_R w'$.  

If $w\varphi=1_n$, then $w$ and $w'$ are both the empty word, whence $w\sim_R w'$. 

Suppose that $w\varphi\neq 1_n$. By Lemma \ref{funda}, there exist $1\leqslant k\leqslant n-1$, $1\leqslant r_1<\cdots<r_k\leqslant n-1$ and $1\leqslant i,j \leqslant n$ such that $w\sim_R e_i f_j a_{r_k}\cdots a_{r_1}$ with $j\leqslant i$ or $(i,j)=(1,n)$. Moreover, since $\dom(w'\varphi)= \dom(w\varphi)=[j,i]$, there exist  $1\leqslant \ell\leqslant n-1$ and $1\leqslant s_1<\cdots<s_\ell\leqslant n-1$ such that $w'\sim_R e_{i} f_{j} a_{s_\ell}\cdots a_{s_1}$. If $w\varphi=0_n$, then $(i,j)=(1,n)$ and so 
$$
w\sim_R e_1f_na_{r_k}\cdots a_{r_1}\sim_{R_3} f_ne_1a_{r_k}\cdots a_{r_1} \sim_R f_ne_1\sim_R e_1f_n. 
$$
(by applying $R'_4$ and $R'_3$, respectively, in the last two steps). 
Similarly, we get $w'\sim_R e_1f_n$ and so $w\sim_R w'$.

Suppose $w\varphi\neq 0_n$. Then, $\dom(w\varphi)=[j,i]\not= \emptyset$, i.e. $j\leqslant i$, 
and so, in view of $(\ref{e12})$, there exist $0\leqslant p\leqslant q\leqslant k$ and $0\leqslant p'\leqslant q'\leqslant \ell$ such that 
$$
j\in [r_p+1,r_{p+1}]\cap [s_{p'}+1,s_{p'+1}] \mbox{ and }\, i\in [r_q+1,r_{q+1}]\cap [s_{q'}+1,s_{q'+1}] 
$$
where $r_0=s_0=0$ and $r_{k+1}=s_{\ell+1}=n$. 
Then, we have $(x)(a_{r_k} \cdots a_{r_1}) =(x)(w\varphi) =(x)(w'\varphi) =(x)(a_{s_\ell} \cdots a_{s_1})$ for all $x\in [j,i]$. 
In particular, $(j)(a_{r_k} \cdots a_{r_1})= j-p=j-p' =(j)(a_{s_\ell} \cdots a_{s_1})$ and so $p=p'$. 
Similarly, we have $q=q'$. 
Next, we show that $r_t=s_t$ for all $p+1\leqslant t\leqslant q$. 
Let $p+1\leqslant t\leqslant q$, Since $j\leqslant r_{p+1}\leqslant r_{t}\leqslant r_{q} \leqslant i-1$, we get $r_{t}\in [j,i]$. 
Similarly, we also get $s_{t}\in [j,i]$. 
Moreover, since $s_{t}\in [s_{t-1}+1,s_{t}]$ and $s_{t}\in [r_{u-1}+1,r_{u}]$ for some $p+1\leqslant u\leqslant q+1$, 
we have $s_t-t+1=(s_t)(a_{s_\ell} \cdots a_{s_1})=(s_t)(a_{r_k} \cdots a_{r_1})=s_t-u$, whence $u=t-1$. 
Therefore, $s_{t}\in [r_{t-1}+1,r_{t}]$ and so $s_{t}\leqslant r_{t}$. 
Similarly, we obtain $r_{t}\leqslant s_{t}$. Thus, $a_{r_q}\cdots a_{r_{p+1}} =a_{s_q}\cdots a_{s_{p+1}}$.

Now, since $r_1<\cdots<r_p<j\leqslant r_{p+1}<\cdots<r_k$, we have 
$$
f_ja_{r_k}\cdots a_{r_{p+1}} a_{r_p}\cdots a_{r_1} \sim_{R_5} a_{r_k}\cdots a_{r_{p+1}} f_ja_{r_p} \cdots a_{r_1}
\sim_R  a_{r_k}\cdots a_{r_{p+1}} f_j a_1^p,  
$$
by applying Lemma \ref{aux} in the last step. 
Similarly, $f_j a_{s_\ell}\cdots a_{s_1} \sim_R a_{s_\ell}\cdots a_{s_{p+1}} f_j a_1^p$. 
Moreover, since $i\leqslant r_{q+1}<\cdots<r_k$ and $i\leqslant s_{q+1}<\cdots<s_\ell$, it follows from $R'_4$ that 
$$
e_i a_{r_k}\cdots a_{r_{p+1}} \sim_R e_i a_{r_q}\cdots a_{r_{p+1}} 
\quad \text{and}\quad 
e_i a_{s_\ell}\cdots a_{s_{p+1}} \sim_R e_i a_{s_q}\cdots a_{s_{p+1}}. 
$$
Therefore, 
\begin{align*}
w\sim_R e_if_ja_{r_k}\cdots a_{r_1} \sim_R e_i a_{r_k}\cdots a_{r_{p+1}} f_j a_1^p \sim_R 
e_i a_{r_q}\cdots a_{r_{p+1}} f_j a_1^p  \qquad\quad \\ = e_i a_{s_q}\cdots a_{s_{p+1}} f_j a_1^p 
\sim_R e_i a_{s_\ell}\cdots a_{s_{p+1}} f_j a_1^p \sim_R e_if_j a_{s_\ell}\cdots a_{s_1} \sim_R w', 
\end{align*}
as required. 
\end{proof} 

\bigskip 

To finish this section, by using the isomorphism $\phi$, we will deduce an immediate presentation for $\mathcal{PIO}_{n}^+$. The relations obtained in this way will be used, in the next section, to obtain a presentation for $\mathcal{PIO}_{n}$. 

\smallskip 

Observe that the transformations $b_{1},\ldots ,b_{n-1}$ defined in (\ref{gen}) belong to $\mathcal{PIO}_n^+$. 
Moreover, we have 
$$
{a}_i\phi =b_{n-i}, \quad e_i\phi= f_{n+1-i} \quad \mbox{and} \quad  f_{i+1}\phi =e_{n-i},
$$
for $1\leqslant i\leqslant n-1$. Thus, if $B=\{b_{1},\ldots ,b_{n-1}\}$, then $B\cup E\cup F$ is a generating set of  $\mathcal{PIO}_{n}^+$ with minimum size. 

\smallskip 

Next, consider the following sets of monoid relations over the alphabet $B\cup E\cup F$:

\begin{description}
	
	\item $(R_\text{1b})$ $b_ib_1=b_i$, for $1\leqslant i\leqslant n-1$; 

        \item\hspace*{22pt} $b_ib_{j+1}=b_{j+1}b_{i+1}$, for $1\leqslant i\leqslant j\leqslant n-2$; 
    
        \item  $(R_\text{3b})$ $e_if_j=f_je_i$, for $2\leqslant j\leqslant i\leqslant n-1$; 
	
	\item\hspace*{22pt} $e_{i}f_{i+1}=f_{i+1}e_{i}=f_ne_1$, for $1\leqslant i \leqslant n-1$; 
	
	\item\hspace*{22pt} $e_ie_{i+1}=e_{i+1}e_i=e_i$, for $1\leqslant i\leqslant n-2$; 
	
	\item\hspace*{22pt} $f_if_{i+1}=f_{i+1}f_i=f_{i+1}$, for $2\leqslant  i\leqslant n-1$; 
    
	\item  $(R_\text{4b})$  $b_if_{i+2}=f_{i+2}$, for $1\leqslant i\leqslant n-2$; 
	
	\item\hspace*{22pt} $b_if_{j+1}=f_{j}b_i$, for $2\leqslant j\leqslant i\leqslant n-1$; 
	
	\item\hspace*{22pt} $b_1f_{2}=b_1$ and $f_{i+1}b_i=f_{i+1}$, for $1\leqslant i\leqslant n-1$; 
		
	\item  $(R_\text{5b})$  $b_ie_j=e_jb_i$, for $1\leqslant i<j \leqslant n-1$; 
	
	\item\hspace*{22pt} $b_ie_j=e_{j-1}b_i$, for $2\leqslant  j\leqslant i \leqslant n-1$; 
	
	\item\hspace*{22pt} $b_1e_1=f_ne_1$ and $b_{n-1}e_1=f_ne_1$; 
   
   	\item  $(R_\text{6b})$  $e_ib_i=e_ib_{n-1}$, for $1\leqslant i \leqslant n-2$.  

\end{description}

\smallskip

Let $R_\text{b}=R_\text{1b}\cup R_2\cup R_\text{3b}\cup R_\text{4b}\cup R_\text{5b}\cup R_\text{6b}$. 
Then, as an immediate consequence of Theorem \ref{presPIO-}, we have the following corollary. 

\begin{corollary}\label{presPIO+} 
The monoid $\mathcal{PIO}_{n}^+$ is defined by the presentation $\langle B\cup E\cup F\mid R_\text{b}\rangle$ on $3n-3$ generators and $\frac{1}{2}(5n^2+9n-20)$ relations. 
\end{corollary}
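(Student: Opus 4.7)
The plan is to transfer Theorem \ref{presPIO-} along the isomorphism $\phi\colon\mathcal{PIO}_n^-\to\mathcal{PIO}_n^+$, obtained as the restriction of the involutive automorphism of $\mathcal{PT}_n$ introduced at the beginning of the paper. On the distinguished generating set of $\mathcal{PIO}_n^-$ given in (\ref{gen}), this isomorphism acts by $a_i\phi=b_{n-i}$, $e_i\phi=f_{n+1-i}$ and $f_{i+1}\phi=e_{n-i}$ for $1\leqslant i\leqslant n-1$, and so it provides a bijection between $A\cup E\cup F$ and $B\cup E\cup F$. In particular, the generating set of $\mathcal{PIO}_n^+$ has the same cardinality $3n-3$.

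Since any isomorphism of monoids carries a defining presentation to a defining presentation by the obvious relabeling of generators, it suffices to verify that the letter-by-letter substitution induced by $\phi$ transforms the relation set $R$ into $R_\text{b}$, up to a straightforward reindexing of parameters. This is a routine block-by-block check: $(R_1)$ maps to $(R_\text{1b})$ (the second family $a_ia_j=a_{j+1}a_i$ becomes $b_{n-i}b_{n-j}=b_{n-j-1}b_{n-i}$, which after setting $k=n-j-1$ and $\ell=n-i$ is precisely $b_k b_{\ell+1}=b_\ell b_k$ for the correct index range, and the first family is immediate); $(R_2)$ is invariant (the idempotence relations for the $e_i$ swap with those for the $f_{i+1}$); $(R_3)$ maps to $(R_\text{3b})$ (noting that $(e_1f_n)\phi=f_ne_1$, and that the chains of commutations and absorptions among $e$'s and $f$'s exchange under the reversal of indices); and analogously $(R_4)$, $(R_5)$, $(R_6)$ map to $(R_\text{4b})$, $(R_\text{5b})$, $(R_\text{6b})$ respectively, with the small checks $a_{n-1}e_{n-1}=a_{n-1}\mapsto b_1f_2=b_1$, $e_ia_i=e_i\mapsto f_{i+1}b_i=f_{i+1}$, $a_1f_n=a_{n-1}f_n=e_1f_n \mapsto b_{n-1}e_1=b_1e_1=f_ne_1$, and so on. Since $\phi$ acts as a bijection on the set of relations, the total count $\frac{1}{2}(5n^2+9n-20)$ is preserved.

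There is essentially no obstacle here: the only task is to execute the six translations carefully, each amounting to a straightforward relabeling of indices and rewriting of the accompanying inequalities. Hence Corollary \ref{presPIO+} follows at once from Theorem \ref{presPIO-}.
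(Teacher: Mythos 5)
Your proposal is correct and matches the paper's own argument: the paper likewise obtains Corollary \ref{presPIO+} as an immediate consequence of Theorem \ref{presPIO-} by transporting the presentation $\langle A\cup E\cup F\mid R\rangle$ along the isomorphism $\phi$ (with $a_i\phi=b_{n-i}$, $e_i\phi=f_{n+1-i}$, $f_{i+1}\phi=e_{n-i}$), the relation set $R_\text{b}$ being exactly the image of $R$ under this relabeling. Your index-by-index verification of the six blocks is accurate.
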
 

\section{A presentation for $\mathcal{PIO}_{n}$} 

As in the previous section, throughout this section we also consider $n\geqslant2$. 

\smallskip 

We begin this section with recalling the presentation for $\mathcal{IO}_n$ exhibited by Fernandes in \cite{Fernandes:2024}. Consider the following set of monoid relations over the alphabet $A\cup B$: 

\begin{description}
		
	\item  $(R_7)$  $b_ia_j=a_jb_i$, for $1\leqslant i< j\leqslant n-1$; 
	
	\item\hspace*{22pt} $b_ia_1=a_i$, for $1\leqslant i\leqslant n-1$;  
	
	\item\hspace*{22pt} $a_ib_{n-1}=b_i$, for $1\leqslant i\leqslant n-1$. 
		
\end{description}

Notice that $a_{1},\ldots ,a_{n-1}, b_{1},\ldots ,b_{n-1}\in \mathcal{IO}_n$. Moreover, by \cite[Proposition 3.3 and Theorem 3.5]{Fernandes&Paulista:2023}, for $n\geqslant 3$, $\{ a_{1},\ldots ,a_{n-2},b_{n-1} \}$ is a generating set of $\mathcal{IO}_n$ of minimum size. In particular, for $n\geqslant 3$, the monoid $\mathcal{IO}_n$ has rank $n-1$. 
On the other hand, by \cite[Theorem 4.8]{Fernandes:2024}, the monoid $\mathcal{IO}_n$ is defined by the presentation $\langle A\cup B \mid R_1\cup R_\text{1b} \cup R_7 \rangle$ on $2n-2$ generators and $\frac{1}{2}(3n^2-n-2)$ relations. 

\smallskip 

Observe that, since $\{a_1,\ldots,a_{n-2},b_{n-1},e_{n-1},f_2\}$ is a generating set of the monoid $\mathcal{PIO}_n$, $A\cup B\cup E\cup F$ is obviously a generating set of $\mathcal{PIO}_n$ as well. 

Let us consider the set 
$$
\bar{R}=R_1\cup R_\text{1b}\cup R_2\cup R_3\cup R_4\cup R_\text{4b}\cup R_5\cup R_\text{5b}\cup R_\text{6}\cup R_\text{6b}\cup R_7
$$ 
of $5n^2+3n-10$ monoid relations over the alphabet $A\cup B\cup E\cup F$. 

Let us observe that, by Lemma \ref{morerel}, we have $e_1f_n\sim_R f_ne_1$, whence $e_1f_n\sim_{\bar{R}} f_ne_1$ and so 
the congruences 
$\sim_{\bar R}$ and $\sim_{\bar R\cup R_\text{3b}}$ of $(A\cup B\cup E\cup F)^*$ coincide. 

Let $\psi:(A\cup B\cup E\cup F)^*\longrightarrow \mathcal{PIO}_{n}$ be the homomorphism of monoids that extends the mapping $A\cup B\cup E\cup F\longrightarrow \mathcal{PIO}_{n}$ defined by $a_{i}\mapsto a_{i}$, $b_{i}\mapsto b_{i}$, $e_{i}\mapsto e_{i}$ and $f_{i+1}\mapsto f_{i+1}$ for $1\leqslant i\leqslant n-1$. 

\smallskip 

It is a routine matter to prove the following lemma: 

\begin{lemma} \label{satisfies2} 
The generating set $A\cup B\cup E\cup F$ of $\mathcal{PIO}_{n}$ satisfies (via $\psi$) all relations from $\bar R$. 
\end{lemma}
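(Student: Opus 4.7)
The plan is to partition $\bar R$ according to the submonoid whose presentation each relation belongs to, and then invoke the three previously established results; no new transformation-level computation should be needed. The key observation is that the generators $a_i$, $b_i$, $e_i$, $f_{i+1}$, as defined in (\ref{gen}), denote literally the same transformations whether viewed inside $\mathcal{PIO}_n^-$, $\mathcal{PIO}_n^+$, $\mathcal{IO}_n$ or $\mathcal{PIO}_n$, so any relation satisfied by them in a submonoid is automatically satisfied by their $\psi$-images in $\mathcal{PIO}_n$.

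First I would split $\bar R$ into three overlapping packages. The subset $R_1\cup R_2\cup R_3\cup R_4\cup R_5\cup R_6$ is exactly $R$ and so, by Lemma \ref{satisfies}, is satisfied by the generators of $\mathcal{PIO}_n^-$; since $\mathcal{PIO}_n^-\subseteq\mathcal{PIO}_n$, these relations hold under $\psi$. The subset $R_\text{1b}\cup R_2\cup R_\text{4b}\cup R_\text{5b}\cup R_\text{6b}$ is contained in $R_\text{b}$, so by Corollary \ref{presPIO+} it is satisfied in $\mathcal{PIO}_n^+\subseteq\mathcal{PIO}_n$. Finally, $R_1\cup R_\text{1b}\cup R_7$ is the defining relation set for $\mathcal{IO}_n$ given in \cite[Theorem~4.8]{Fernandes:2024}; in particular, $R_7$ holds in $\mathcal{IO}_n\subseteq\mathcal{PIO}_n$. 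Since $\bar R$ is the union of these three packages, every relation of $\bar R$ is satisfied by $A\cup B\cup E\cup F$ under $\psi$.

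The main obstacle is really no obstacle: this lemma is a bookkeeping step whose purpose is to cleanly separate the easy direction of Proposition \ref{provingpresentation} from the hard direction (to be carried out in the next theorem, where one must show that any two words with the same image are $\sim_{\bar R}$-equivalent). The only point requiring mild care is that $R_\text{3b}$ is not literally contained in $\bar R$; but as observed just before the lemma, $e_1f_n\sim_R f_n e_1$ (via Lemma \ref{morerel}), so the ``$f_n e_1$'' form of the $R_\text{3b}$ relations is a consequence of $R_3\subseteq\bar R$. This remark justifies invoking $R_\text{b}$ inside $\bar R$ for those particular relations, but it is not part of what Lemma \ref{satisfies2} itself asserts. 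If one wished to avoid citing three prior results, the alternative is to expand each relation directly on the two-line descriptions in (\ref{gen}); this is mechanical but lengthy, and is exactly the routine verification the authors elide.
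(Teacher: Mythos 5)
Your argument is correct, and it is in fact more economical than what the paper implicitly intends: the authors simply declare the verification ``a routine matter,'' i.e.\ a direct check of each relation on the two-line representations in~(\ref{gen}), whereas you dispatch the whole of $\bar R$ by citing Lemma~\ref{satisfies}, Corollary~\ref{presPIO+} and the presentation of $\mathcal{IO}_n$ from~\cite[Theorem~4.8]{Fernandes:2024}. Your three packages do cover $\bar R$ ($R_3$, $R_4$, $R_5$, $R_6$ from $R$; $R_\text{4b}$, $R_\text{5b}$, $R_\text{6b}$ from $R_\text{b}$; $R_7$ from the $\mathcal{IO}_n$ presentation; $R_1$, $R_\text{1b}$, $R_2$ appearing in more than one), and the key observation you state up front is exactly what makes the reduction legitimate: ``satisfying a relation'' means an equality of actual partial transformations, and composition is computed identically in a submonoid and in the ambient monoid, so a relation verified in $\mathcal{PIO}_n^-$, $\mathcal{PIO}_n^+$ or $\mathcal{IO}_n$ transfers verbatim to $\mathcal{PIO}_n$ under $\psi$. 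The only hidden dependency worth flagging is that Corollary~\ref{presPIO+} asserts that the \emph{literal} transformations $b_i$, $e_i$, $f_{i+1}$ satisfy $R_\text{b}$; this holds because $R_\text{b}$ is obtained by pushing $R$ through the automorphism $\phi$ of $\mathcal{PT}_n$ and reindexing, so the equalities of transformations in $R$ map to equalities of transformations in $R_\text{b}$. You are also right that $R_\text{3b}$ plays no role here. What your route buys is a proof with no computation at all; what the authors' elided direct check buys is independence from the earlier lemmas (useful only if one wanted this lemma to stand alone). Either is acceptable; yours is the cleaner write-up.
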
 

Now, recall that 
\begin{eqnarray*}
W=\{a_{i_k}\cdots a_{i_1}b_{n-1}^\ell &\mid & 1\leqslant i_1< \cdots <i_k \leqslant \min\{n-\ell+k-2,n-1\},  \\
&& 0\leqslant k\leqslant n-1 \mbox{ and } 0\leqslant \ell\leqslant n-1\}.
\end{eqnarray*}
is a set of canonical forms of $\mathcal{IO}_{n}$; see \cite{Fernandes:2024}. 

With a proof similar to Lemma \ref{funda}, 
using additionally the relations $R_\text{4b}$, $R_\text{5b}$ and the relations obtained from $R'_4$ and $R'_5$ (Lemma \ref{morerel}) 
using the isomorphism $\phi$, as well as the set of canonical forms $W$ and the fact that 
$\langle A\cup B\mid R_1\cup R_\text{1b}\cup R_7\rangle$ is a presentation for $\mathcal{IO}_{n}$, we have:

\begin{lemma} \label{funda2} 
Let $w\in (A\cup B\cup E\cup F)^*$. Then, $w\sim_{\bar R} e_i f_j a_{i_k} \cdots a_{i_1} b_{n-1}^\ell$, 
for some $1\leqslant i_1< \cdots< i_k\leqslant \min\{n-\ell+k-2,n-1\}$, 
$0\leqslant k\leqslant n-1$, $0\leqslant \ell\leqslant n-1$ and $1\leqslant j\leqslant i\leqslant n$ or $(i,j)=(1,n)$. 
\end{lemma}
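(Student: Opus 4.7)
The plan is to mimic the proof of Lemma \ref{funda}: first push all letters of $E\cup F$ to the left of all $A\cup B$-letters, then normalize each half. For the $a$-side, the needed moves come from $R_4, R_5$ together with the derived relations $R'_4, R'_5$ from Lemma \ref{morerel}. For the $b$-side, $R_\text{4b}$ and $R_\text{5b}$ cover the typical commutations, but a few edge cases (the $\phi$-images of identities in $R'_4, R'_5$ not explicit in $R_\text{4b}\cup R_\text{5b}$) need derivation. For instance, for $i\geqslant 2$, $b_if_2\sim_{R_\text{1b}} b_ib_1f_2\sim_{R_\text{4b}} b_ib_1\sim_{R_\text{1b}} b_i$, and for $1\leqslant i\leqslant n-1$, $b_ie_1\sim_{R_7} a_ib_{n-1}e_1\sim_{R_\text{5b}} a_if_ne_1\sim_{\bar R} f_ne_1$ (the last step combining $R'_5$, $R'_3$ and $R_2$). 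The pattern --- trade $b_i$ for $a_ib_{n-1}$ via $R_7$, then reduce on the $a$-side --- dispatches the remaining edge cases uniformly. Iterating these rewrites strictly decreases the number of $A\cup B$-letters preceding an $E\cup F$-letter, so the process terminates with $w\sim_{\bar R} uv$ for some $u\in (E\cup F)^*$ and $v\in (A\cup B)^*$.

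For the $A\cup B$-suffix $v$, since $\langle A\cup B\mid R_1\cup R_\text{1b}\cup R_7\rangle$ presents $\mathcal{IO}_n$ and $W$ is a set of canonical forms for $\mathcal{IO}_n$, there exist admissible $k, \ell, i_1<\cdots<i_k$ with $v\sim_{R_1\cup R_\text{1b}\cup R_7} a_{i_k}\cdots a_{i_1} b_{n-1}^\ell$; since these relations are contained in $\bar R$, the equivalence also holds modulo $\bar R$.

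For the $E\cup F$-prefix $u$, apply $R_2$ to remove repeated idempotents and $R_3$ together with $R'_3$ (and $R_\text{3b}$, which is implied by $\bar R$, since $\sim_{\bar R}=\sim_{\bar R\cup R_\text{3b}}$) to commute and absorb among the $e$'s and $f$'s, reducing $u$ to $e_i f_j$ for some $1\leqslant i, j\leqslant n$. If $i<j$, then $R'_3$ collapses $e_i f_j$ to $e_1 f_n$, yielding $(i,j)=(1,n)$; otherwise $1\leqslant j\leqslant i\leqslant n$. The main obstacle is the first stage: verifying by case analysis that every adjacent $A\cup B$-then-$E\cup F$ pair can be rewritten using $\bar R$; the derivations above show how the cases not captured directly by $R_4\cup R_\text{4b}\cup R_5\cup R_\text{5b}$ are handled.
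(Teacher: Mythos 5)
Your proposal is correct and follows essentially the same route as the paper, which itself only sketches this lemma as "similar to Lemma \ref{funda}, using additionally $R_\text{4b}$, $R_\text{5b}$ and the $\phi$-images of $R'_4$ and $R'_5$, together with the canonical forms $W$ and the presentation of $\mathcal{IO}_n$." Your explicit derivations of the edge cases $b_if_2\sim_{\bar R} b_i$ and $b_ie_1\sim_{\bar R} f_ne_1$ supply exactly the details the paper leaves implicit, and the rest of the argument (normalizing the $(E\cup F)^*$-prefix via $R_2$, $R_3$, $R'_3$ and the $(A\cup B)^*$-suffix via the canonical forms $W$) coincides with the intended proof.
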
 

Next, we prove a series of auxiliary results. 

\begin{lemma}\label{r1}
If $1\leqslant j\leqslant n$, then $f_ja_1^{j-1}b_{n-1}^{j-1} \sim_{\bar R} f_j$. 
\end{lemma}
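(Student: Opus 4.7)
The plan is to induct on $j$. The case $j = 1$ is trivial since, by convention, $f_1$, $a_1^0$ and $b_{n-1}^0$ are all the empty word. The case $j = 2$ follows by two one-letter rewrites: $a_1 b_{n-1} \sim_{R_7} b_1$ (instance $a_i b_{n-1} = b_i$ with $i=1$), and then $f_2 b_1 \sim_{R_\text{4b}} f_2$ (instance $f_{i+1} b_i = f_{i+1}$ with $i=1$), giving $f_2 a_1 b_{n-1} \sim_{\bar R} f_2$.

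For the inductive step with $j \geq 3$, assuming the claim holds for $j-1$, the idea is to peel one $a_1$ off the left and one $b_{n-1}$ off the right, collapsing the middle by the induction hypothesis:
\begin{align*}
f_j a_1^{j-1} b_{n-1}^{j-1}
&\sim_{R_5} a_1 \bigl(f_{j-1} a_1^{j-2} b_{n-1}^{j-2}\bigr) b_{n-1} \\
&\sim_{\mathrm{IH}} a_1 f_{j-1} b_{n-1} \\
&\sim_{R_\text{4b}} a_1 b_{n-1} f_j \\
&\sim_{R_7} b_1 f_j \\
&\sim_{\bar R} f_j.
\end{align*}
The first step uses $f_j a_1 \sim_{R_5} a_1 f_{j-1}$ (the $R_5$ relation $a_i f_j = f_{j+1} a_i$ with $i = 1$, valid since $j \geq 3$); the third uses $f_{j-1} b_{n-1} \sim_{R_\text{4b}} b_{n-1} f_j$ (the $R_\text{4b}$ relation $b_i f_{j+1} = f_j b_i$ with $i = n-1$, again requiring $j \geq 3$); and the fourth is the $R_7$ instance $a_1 b_{n-1} = b_1$.

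The final step $b_1 f_j \sim_{\bar R} f_j$ (for $j \geq 3$) is a short sub-argument: when $j = 3$ it is exactly the $R_\text{4b}$ instance $b_1 f_3 = f_3$; when $j \geq 4$, one inserts an $f_3$ using the derived identity $f_3 f_j \sim f_j$ from Lemma \ref{morerel} to write $b_1 f_j \sim b_1 f_3 f_j \sim_{R_\text{4b}} f_3 f_j \sim f_j$. The main obstacle throughout is the bookkeeping of index constraints in $\bar R$; specifically, the $j \geq 3$ threshold arising from the uses of $R_5$ and $R_\text{4b}$ in the inductive step is precisely why the $j = 2$ base case must be treated separately.
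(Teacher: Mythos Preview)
Your proof is correct. Both your argument and the paper's proceed by induction on $j$, but the inductive mechanism differs. The paper extracts the \emph{innermost} pair $a_1b_{n-1}$, rewrites it as $b_1$ via $R_7$, commutes this $b_1$ rightward through the remaining $b_{n-1}$'s using $R_\text{1b}$ until it becomes $b_j$, inserts an $f_j$ next to $f_{j+1}$ via $R_3$ so that the induction hypothesis applies directly to $f_j a_1^{j-1}b_{n-1}^{j-1}$, and finishes with $f_{j+1}b_j=f_{j+1}$ from $R_\text{4b}$. You instead peel off the \emph{outermost} $a_1$ on the left and $b_{n-1}$ on the right by commuting them past the $f$-letter (via $R_5$ and $R_\text{4b}$), dropping $f_j$ to $f_{j-1}$ so the induction hypothesis applies immediately, and then reassemble. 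The paper's computation is uniform for all $j\geqslant 1$, whereas your route must isolate $j=2$ because the commutations $f_ja_1=a_1f_{j-1}$ and $f_{j-1}b_{n-1}=b_{n-1}f_j$ need $j\geqslant 3$; on the other hand, you avoid the iterated $R_\text{1b}$ shuffle $b_1b_{n-1}^{j-1}\sim b_{n-1}^{j-1}b_j$ entirely.
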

\begin{proof}
We proceed by induction on $j$. For $j=1$, the relation is trivial. Let $1\leqslant j <n$ and admit that 
$f_j\sim_{\bar R} f_ja_1^{j-1}b_{n-1}^{j-1}$.  
Then, 
\begin{align*}
f_{j+1}a_1^jb_{n-1}^j=f_{j+1}a_1^{j-1}(a_1b_{n-1})b_{n-1}^{j-1} 
\sim_{R_7} f_{j+1}a_1^{j-1} b_1 b_{n-1}^{j-1} \sim_{R_3\cup R_\text{1b}}  \qquad \\ 
f_{j+1}(f_ja_1^{j-1} b_{n-1}^{j-1})b_j \sim_{\bar R} f_{j+1}f_jb_j\sim_{R_3}f_{j+1}b_j \sim_{R_\text{4b}} f_{j+1}, 
\end{align*} 
as required. 
\end{proof} 

\begin{lemma}\label{r2}
If $2\leqslant j\leqslant n$, then 
$f_ja_1^{j-1}b_{n-1}^{j-2}\sim_{\bar R} f_ja_1$. 
\end{lemma}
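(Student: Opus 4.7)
The plan is to reduce this to Lemma \ref{r1} by using a single commutation from $R_5$ to strip off the first $a_1$ on the left of $f_j$, applied together with the observation that $a_1 f_{j-1} \sim_{\bar R} f_j a_1$. Since Lemma \ref{r1} already takes care of an \emph{equal} number of $a_1$'s and $b_{n-1}$'s, the trick is simply to peel off one extra $a_1$ from the front of the word, so that the tail becomes $f_{j-1} a_1^{j-2} b_{n-1}^{j-2}$, which Lemma \ref{r1} collapses to $f_{j-1}$.

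More precisely, I would first dispatch the base case $j=2$: there the target relation reads $f_2 a_1 \sim_{\bar R} f_2 a_1$ and is trivial. For $j\geqslant 3$ (in particular only relevant when $n\geqslant 3$), the relation $a_1 f_{j-1} = f_j a_1$ from $R_5$ is available because $2\leqslant j-1\leqslant n-1$. This gives
\begin{equation*}
f_j a_1^{j-1} b_{n-1}^{j-2}
\;\sim_{R_5}\; a_1 f_{j-1} a_1^{j-2} b_{n-1}^{j-2}.
\end{equation*}
Next, apply Lemma \ref{r1} to the suffix $f_{j-1} a_1^{j-2} b_{n-1}^{j-2}$ to obtain $a_1 f_{j-1}$, and finally apply $R_5$ once more, in the reverse direction, to rewrite $a_1 f_{j-1}$ as $f_j a_1$. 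Chaining these gives the desired congruence.

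I do not expect any real obstacle: both tools (the commutation in $R_5$ and Lemma \ref{r1}) are already in place, and the only thing to check is that the indices stay inside the ranges for which these relations are actually asserted. The edge cases to verify are $j=2$ (where the identity is tautological and no commutation is needed) and $j=n$ (where $f_n a_1 = a_1 f_{n-1}$ still lies inside the range $3\leqslant j\leqslant n$ of the $R_5$ instance used). Once these are confirmed, the argument fits in two lines.
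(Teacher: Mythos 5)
Your argument is exactly the paper's proof: dispatch $j=2$ as a tautology, then for $j\geqslant 3$ use the $R_5$ instance $a_1f_{j-1}=f_ja_1$ to rewrite $f_ja_1^{j-1}b_{n-1}^{j-2}$ as $a_1\bigl(f_{j-1}a_1^{j-2}b_{n-1}^{j-2}\bigr)$, collapse the suffix to $f_{j-1}$ by Lemma~\ref{r1}, and commute back. The index checks you flag are the right ones and all pass, so the proof is correct.
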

\begin{proof}
For $j=2$, the relation is identical. For $3\leqslant j \leqslant n$, 
by Lemma \ref{r1}, we have $f_{j-1} a_1^{j-2} b_{n-1}^{j-2}\sim_{\bar R} f_{j-1}$, whence 
$$
f_ja_1^{j-1} b_{n-1}^{j-2}= (f_ja_1) a_1^{j-2} b_{n-1}^{j-2} \sim_{R_5} a_1(f_{j-1} a_1^{j-2} b_{n-1}^{j-2}) \sim_{\bar R} a_1f_{j-1} \sim_{R_5} f_ja_1, 
$$
as required.  
\end{proof} 

Let $1\leqslant p<j\leqslant n$. Then, it is a routine matter to check that the equality 
$a_1^{j-1}b_{n-1}^{j-2}a_1^{p-1} =  a_1^{j-1}b_{n-1}^{j-p-1}$ is valid in $\mathcal{IO}_n$. 
Since $\mathcal{IO}_n$ is defined by the presentation $\langle A\cup B\mid R_1\cup R_\text{1b} \cup R_7\rangle$ 
and  $R_1\cup R_\text{1b}\cup R_7\subseteq \bar{R}$, we immediately have the following lemma. 

\begin{lemma}\label{r3}
If $1\leqslant p<j\leqslant n$, then $a_1^{j-1}b_{n-1}^{j-2}a_1^{p-1}\sim_{\bar R}  a_1^{j-1}b_{n-1}^{j-p-1}$. 
\end{lemma}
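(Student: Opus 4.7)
The proof is essentially set up by the paragraph preceding the lemma statement, so the plan reduces to two well-separated tasks.

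First, I would verify that the claimed equality $a_1^{j-1}b_{n-1}^{j-2}a_1^{p-1} = a_1^{j-1}b_{n-1}^{j-p-1}$ truly holds in $\mathcal{IO}_n$ by direct computation on $\Omega_n$. Using the descriptions from (\ref{gen}), the generators $a_1$ and $b_{n-1}$ act as
\[
xa_1 = \max\{x-1,1\} \quad \text{and} \quad xb_{n-1} = \min\{x+1,n\},
\]
so by an easy induction $xa_1^k = \max\{x-k,1\}$ and $xb_{n-1}^k = \min\{x+k,n\}$ for all $k\geqslant 0$ and $x\in\Omega_n$. Composing, and splitting on whether $x\leqslant j-1$ or $x\geqslant j$, one checks (using that $1\leqslant p<j\leqslant n$, so $j-p\in[1,n-1]$) that both $a_1^{j-1}b_{n-1}^{j-2}a_1^{p-1}$ and $a_1^{j-1}b_{n-1}^{j-p-1}$ send
\[
x \longmapsto \begin{cases} j-p & \text{if } x\leqslant j-1,\\ x-p & \text{if } x\geqslant j.\end{cases}
\]
Hence they coincide as elements of $\mathcal{IO}_n$.

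Second, I would invoke the presentation $\langle A\cup B\mid R_1\cup R_\text{1b}\cup R_7\rangle$ of $\mathcal{IO}_n$ recalled at the beginning of this section (from \cite[Theorem 4.8]{Fernandes:2024}). Since the two words $a_1^{j-1}b_{n-1}^{j-2}a_1^{p-1}$ and $a_1^{j-1}b_{n-1}^{j-p-1}$ lie in $(A\cup B)^*$ and represent the same element of $\mathcal{IO}_n$, they are $\sim_{R_1\cup R_\text{1b}\cup R_7}$-related in $(A\cup B)^*$, and the same relations remain valid when the alphabet is extended to $A\cup B\cup E\cup F$. Because $R_1\cup R_\text{1b}\cup R_7\subseteq \bar R$, this immediately yields $a_1^{j-1}b_{n-1}^{j-2}a_1^{p-1}\sim_{\bar R} a_1^{j-1}b_{n-1}^{j-p-1}$.

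There is no real obstacle here: the only thing to be careful about is the case analysis in the composition (in particular making sure that $j-p\geqslant 1$ and $j-p\leqslant n$, both of which follow from $1\leqslant p<j\leqslant n$). Once the equality is established in $\mathcal{IO}_n$, the presentation result together with the inclusion $R_1\cup R_\text{1b}\cup R_7\subseteq\bar R$ does the rest in a single line.
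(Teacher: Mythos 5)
Your proposal is correct and follows exactly the paper's argument: one first checks by direct computation that $a_1^{j-1}b_{n-1}^{j-2}a_1^{p-1}$ and $a_1^{j-1}b_{n-1}^{j-p-1}$ coincide as elements of $\mathcal{IO}_n$, and then transfers this equality to $\sim_{\bar R}$ via the presentation $\langle A\cup B\mid R_1\cup R_\text{1b}\cup R_7\rangle$ of $\mathcal{IO}_n$ together with the inclusion $R_1\cup R_\text{1b}\cup R_7\subseteq\bar R$. Your explicit verification of the pointwise formulas (including the checks that $j-p\geqslant 1$ and $j-2\geqslant 0$) fills in what the paper dismisses as ``a routine matter''.
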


\begin{lemma}\label{r4}
If $0\leqslant p\leqslant j-1\leqslant n-1$, then $f_ja_1^p\sim_{\bar R} f_ja_1^{j-1}b_{n-1}^{j-p-1}$.
\end{lemma}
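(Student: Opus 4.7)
The plan is to split into two cases based on whether $p=0$ or $p\geqslant 1$, each reducing to previously established lemmas.

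First I would handle the case $p=0$. In this case the statement reduces to $f_j\sim_{\bar R} f_ja_1^{j-1}b_{n-1}^{j-1}$, which is precisely Lemma \ref{r1} (applicable since $1\leqslant j\leqslant n$).

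Next, for $1\leqslant p\leqslant j-1$, note that necessarily $j\geqslant 2$, so Lemma \ref{r2} applies and gives $f_ja_1\sim_{\bar R} f_ja_1^{j-1}b_{n-1}^{j-2}$. Right-multiplying both sides by $a_1^{p-1}$ yields
\begin{equation*}
f_ja_1^p = (f_ja_1)a_1^{p-1}\sim_{\bar R} f_ja_1^{j-1}b_{n-1}^{j-2}a_1^{p-1}.
\end{equation*}
Since $1\leqslant p<j\leqslant n$, Lemma \ref{r3} applies and gives $a_1^{j-1}b_{n-1}^{j-2}a_1^{p-1}\sim_{\bar R} a_1^{j-1}b_{n-1}^{j-p-1}$, so prefixing by $f_j$ we conclude
\begin{equation*}
f_ja_1^p\sim_{\bar R} f_ja_1^{j-1}b_{n-1}^{j-p-1},
\end{equation*}
as required.

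There is no real obstacle here: the result is an immediate combination of Lemmas \ref{r1}, \ref{r2} and \ref{r3}, with the small bookkeeping being only to verify that the hypotheses of those lemmas are satisfied in each case (in particular $j\geqslant 2$ when $p\geqslant 1$, and $p<j\leqslant n$ when invoking Lemma \ref{r3}). The $p=0$ case is handled separately precisely because Lemma \ref{r3} requires $p\geqslant 1$, while Lemma \ref{r1} covers that boundary case directly.
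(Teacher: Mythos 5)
Your proof is correct and follows essentially the same route as the paper: the case $p=0$ is dispatched by Lemma \ref{r1}, and for $p\geqslant 1$ one writes $f_ja_1^p=(f_ja_1)a_1^{p-1}$ and applies Lemmas \ref{r2} and \ref{r3} in succession. The hypothesis checks ($j\geqslant 2$ when $p\geqslant 1$, and $1\leqslant p<j\leqslant n$ for Lemma \ref{r3}) are exactly the right bookkeeping.
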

\begin{proof}
If $p=0$, by Lemma \ref{r1}, the relation is true. So, let us suppose that $p\geqslant1$. Then, $j\geqslant2$ and, 
from Lemmas \ref{r2} and \ref{r3}, it follows 
$$
f_ja_1^p=f_ja_1a_1^{p-1} \sim_{\bar R} 
f_ja_1^{j-1}b_{n-1}^{j-2}a_1^{p-1} \sim_{\bar R} 
f_ja_1^{j-1}b_{n-1}^{j-p-1},
$$
as required. 
\end{proof} 

\begin{lemma}\label{r6}
Let $1\leqslant i_1<\cdots<i_k < i\leqslant n$, $0\leqslant k\leqslant n-1$ 
and $0\leqslant \ell\leqslant n-1$ be such that $n-i+k < \ell \leqslant n-i_k+k-1$. 
Then,  $a_{i-1}\cdots a_{n-\ell+k} a_{i_k}\cdots a_{i_1}b_{n-1}^\ell \sim_{\bar R} a_{i_k}\cdots a_{i_1}b_{n-1}^\ell$. 
\end{lemma}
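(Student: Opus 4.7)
The plan is to exploit that every letter in the equation belongs to $A\cup B$, so both sides are words representing elements of $\mathcal{IO}_n$. Since $\mathcal{IO}_n$ is defined by $\langle A\cup B\mid R_1\cup R_\text{1b}\cup R_7\rangle$ and $R_1\cup R_\text{1b}\cup R_7\subseteq\bar R$, it suffices to verify that the two words evaluate to the same transformation.

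First I would describe the action of the prepended composition $a_{i-1}\cdots a_{n-\ell+k}$. A routine induction on the number of factors (each $a_s$ identifies $s$ and $s+1$ at $s$ and shifts everything above down by one) yields: $x\mapsto x$ for $x\in[1,n-\ell+k-1]$; $x\mapsto n-\ell+k$ for $x\in[n-\ell+k,i]$; and $x\mapsto x-(i-n+\ell-k)$ for $x\in[i+1,n]$. The hypothesis $n-i+k<\ell$ guarantees that the prepended word is nonempty and that these three regions partition $[1,n]$. On the other hand, by (\ref{e12}) together with the formula for $b_{n-1}^\ell$, the element $\alpha:=a_{i_k}\cdots a_{i_1}b_{n-1}^\ell$ satisfies $x\alpha=\min(x-t+\ell,n)$, where $t$ is the index with $x\in[i_t+1,i_{t+1}]$ (convention $i_0=0$, $i_{k+1}=n$).

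Next I invoke the second hypothesis $\ell\leqslant n-i_k+k-1$, which is equivalent to $i_k<n-\ell+k$, to observe that every integer $y\geqslant n-\ell+k$ lies in the top block $[i_k+1,n]$, so $t=k$, and since then $y-k+\ell\geqslant n$ one gets $y\alpha=n$. A three-case check finishes the verification. For $x\in[1,n-\ell+k-1]$, the prepended word fixes $x$, so both sides agree. For $x\in[n-\ell+k,i]$, the prepended word sends $x$ to $n-\ell+k$, which $\alpha$ maps to $n$; and $x\alpha=n$ directly, since $x\geqslant n-\ell+k$. For $x\in[i+1,n]$, the image under the prepended word is $x-(i-n+\ell-k)\geqslant n-\ell+k+1$, so $\alpha$ sends it to $n$; and $x\alpha=n$ since $x>i>n-\ell+k$.

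Having established the equality as transformations in $\mathcal{IO}_n$, I conclude by Fernandes' presentation that the two words are equivalent modulo $R_1\cup R_\text{1b}\cup R_7\subseteq\bar R$, and hence $\sim_{\bar R}$-equivalent. The main obstacle is purely bookkeeping: one must simultaneously use both numerical hypotheses to ensure, on the one hand, that the prepended word really has the claimed three-block action, and, on the other, that every relevant image lands in the top block $[i_k+1,n]$ where $b_{n-1}^\ell$ uniformly outputs $n$, so that the outcome of $\alpha$ is independent of which image inside that block we start from.
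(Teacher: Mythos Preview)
Your approach is essentially the same as the paper's: both arguments verify the equality as transformations in $\mathcal{IO}_n$ and then invoke Fernandes' presentation $\langle A\cup B\mid R_1\cup R_\text{1b}\cup R_7\rangle$ together with $R_1\cup R_\text{1b}\cup R_7\subseteq\bar R$. The paper is more terse, recording only the two key observations $(n-\ell+k)\,a_{i_k}\cdots a_{i_1}b_{n-1}^\ell=n$ and $(a_{i-1}\cdots a_{n-\ell+k})|_{[1,n-\ell+k]}=\id$, and declaring the rest routine, whereas you spell out the three-region action and the case check explicitly; the content is the same.
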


\begin{proof}
By observing that 
$$
(n-\ell+k) a_{i_k}\cdots a_{i_1}b_{n-1}^\ell=n
~\text{ and }~  
(a_{i-1}\cdots a_{n-\ell+k})_{|_{\{1,\ldots,n-\ell+k\}}}=\id_{\{1,\ldots,n-\ell+k\}}, 
$$ 
it is a routine matter to show the equality $a_{i-1}\cdots a_{n-\ell+k} a_{i_k}\cdots a_{i_1}b_{n-1}^\ell = a_{i_k}\cdots a_{i_1}b_{n-1}^\ell$ in $\mathcal{IO}_{n}$. Then the result follows using the same argument as for Lemma \ref{r3}.
\end{proof} 

\medskip 

Let 
\begin{align*}
\overline{W}=\{e_if_ja_{i_k}\cdots a_{i_1}a_1^{j-1}b_{n-1}^\ell\mid 
1\leqslant j\leqslant i\leqslant n,~ j\leqslant i_1<\cdots<i_k\leqslant i-1, \qquad\qquad \\
\mbox{$0\leqslant k\leqslant i-j$ and $0\leqslant \ell\leqslant n-i+j+k-1$}\} 
\cup \{e_1f_n\}.
\end{align*}

We will show that $\overline{W}$ is a set of canonical forms for  $\mathcal{PIO}_{n}$. First,  
observe that 
$$
|\overline{W}|=1 + \sum_{j=1}^{n} \sum_{i=j}^{n} \sum_{k=0}^{i-j}\binom{i-j}{k}(n-i+j+k).
$$ 
Then, by (\ref{e1}), it follows
\begin{eqnarray*}
\lvert \overline{W}\rvert &=&1+\sum_{j=1}^{n} \sum_{i=j}^{n} (2n-i+j)2^{i-j-1} =1+ \sum_{j=1}^{n} \sum_{i=0}^{n-j} (2n-i)2^{i-1}\\
		&=&1+\sum_{j=1}^{n}( n\sum_{i=0}^{n-j} 2^{i} -\frac{1}{2}\sum_{i=0}^{n-j} i2^{i})
		=1+\sum_{j=1}^{n}\left( (n+j+1)2^{n-j} - (n+1)\right)\\
		&=&1-n^2-n+\sum_{j=1}^{n}(n+j+1)2^{n-j} = 1-n^2-n+\sum_{j=0}^{n-1}(2n+1-j)2^{j}\\
		&=&	1-n^2-n+(2n+1)\sum_{j=0}^{n-1}2^{j}-\sum_{j=0}^{n-1}j2^{j}=(n+3)2^n -n^2-3n-2\\
		&=& |\mathcal{PIO}_{n}|. 
\end{eqnarray*}

We are now in a position to establish our presentation for $\mathcal{PIO}_{n}$. 

\begin{theorem}\label{presPIO} 
The monoid $\mathcal{PIO}_{n}$ is defined by the presentation $\langle A\cup B\cup E\cup F\mid \bar R\rangle$ 
on $4n-4$ generators and $5n^2+3n-10$  relations. 
\end{theorem}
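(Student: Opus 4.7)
The plan is to apply Proposition~\ref{ruskuc} with $\overline{W}$ as the candidate set of canonical forms. Condition~(1), that the generating set $A\cup B\cup E\cup F$ satisfies every relation of $\bar R$, is already Lemma~\ref{satisfies2}. Condition~(3), $\lvert\overline{W}\rvert\leqslant\lvert\mathcal{PIO}_n\rvert$, is immediate from the cardinality computation displayed just before the statement, which in fact gives equality to $(n+3)2^n-n^2-3n-2$. Hence the whole proof reduces to verifying Condition~(2): every word $w\in(A\cup B\cup E\cup F)^*$ is $\sim_{\bar R}$-equivalent to some element of $\overline{W}$.

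The first reduction is already in place: Lemma~\ref{funda2} brings an arbitrary $w$ to the shape $e_if_j\,a_{i_k}\cdots a_{i_1}\,b_{n-1}^\ell$, with either $(i,j)=(1,n)$ or $1\leqslant j\leqslant i\leqslant n$, and $1\leqslant i_1<\cdots<i_k\leqslant\min\{n-\ell+k-2,n-1\}$. The degenerate case $(i,j)=(1,n)$ is handled directly: applying $R_3$ to swap $e_1$ and $f_n$, $R'_4$ from Lemma~\ref{morerel} to absorb each $a_{i_p}$ into $e_1$, and the mirror $b$-analogues (derivable from $R_\text{4b}$, $R_\text{5b}$, $R_\text{6b}$ by the same arguments as in Lemma~\ref{morerel}, or transported via the isomorphism $\phi$), one obtains $w\sim_{\bar R} e_1f_n\in\overline{W}$.

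For the generic case $1\leqslant j\leqslant i\leqslant n$, the target is $e_if_j\,a_{i_k}\cdots a_{i_1}\,a_1^{j-1}\,b_{n-1}^\ell$ subject to $j\leqslant i_1<\cdots<i_k\leqslant i-1$, $0\leqslant k\leqslant i-j$, and $0\leqslant\ell\leqslant n-i+j+k-1$. I propose to reduce in three phases. First, each index $i_p<j$ is eliminated by pushing $f_j$ rightward past such a letter using $a_if_j=f_{j+1}a_i$ from $R_5$, collapsing the resulting initial block of $a$'s into a power of $a_1$ via (a variant of) Lemma~\ref{aux}, and then invoking Lemma~\ref{r4} to rewrite the $f_ja_1^p$ segment as $f_ja_1^{j-1}b_{n-1}^{j-p-1}$; this is precisely the mechanism that introduces the $a_1^{j-1}$ block together with a compensating power of $b_{n-1}$. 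Second, any index $i_p\geqslant i$ and any excess $b_{n-1}$ factor are removed via $R'_4$ (to absorb $a_{i_p}$ into $e_i$), its mirror $b$-analogue (to absorb $b_{n-1}$ into $f_j$ once $\ell$ surpasses the admissible bound), and Lemma~\ref{r6} which rearranges the $a$-tail under a sufficiently large $b_{n-1}$-power. Third, the resulting word is read off as an element of $\overline{W}$.

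The main obstacle is the combinatorial bookkeeping: each atomic rewrite changes several of the parameters $(i,j,k,i_1,\ldots,i_k,\ell)$ simultaneously, so the argument has to be organized around a carefully chosen termination measure, such as the lexicographic pair $\bigl(\#\{p:i_p<j\}+\#\{p:i_p\geqslant i\},\,\ell\bigr)$, or equivalently a case analysis driven by the positions of the $i_p$'s relative to $j$ and $i$. Once this reduction is established, the three hypotheses of Proposition~\ref{ruskuc} are verified, and $\langle A\cup B\cup E\cup F\mid\bar R\rangle$ defines $\mathcal{PIO}_n$; the generator count $4n-4$ and relation count $5n^2+3n-10$ are tallied directly from the definitions of $A$, $B$, $E$, $F$ and $\bar R$.
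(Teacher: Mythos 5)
Your overall strategy is the paper's: Proposition~\ref{ruskuc} applied to $\overline{W}$, with Lemma~\ref{funda2} as the entry point, $R'_4$ to absorb the indices $i_p\geqslant i$ into $e_i$, Lemma~\ref{aux} and $R_5$ to collapse the indices $i_p<j$ into $f_ja_1^p$, Lemma~\ref{r4} to trade that for $f_ja_1^{j-1}b_{n-1}^{j-p-1}$, and Lemma~\ref{r6} for the problematic large-$\ell$ case. However, what you defer as ``combinatorial bookkeeping'' is in fact the substantive half of the argument, and one of the two mechanisms you propose for it is invalid. After the rewriting one lands on $w'=e_if_ja_{i_q}\cdots a_{i_{p+1}}a_1^{j-1}b_{n-1}^{\ell+j-p-1}$, and membership in $\overline{W}$ requires the exponent bound $\ell+j-p-1\leqslant n-i+j+(q-p)-1$. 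This does not hold automatically: it has to be extracted from the constraint $i_k\leqslant n-\ell+k-2$ carried over from Lemma~\ref{funda2} together with $i\leqslant i_{q+1}<\cdots<i_k$ (when $q<k$), and when $q=k$ it can genuinely fail, namely when $\ell>n-i+k$. Your proposal never invokes the $\min\{n-\ell+k-2,n-1\}$ constraint, so the case split that drives the endgame is missing.

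Concretely, your suggested fix for the failing case --- ``absorb $b_{n-1}$ into $f_j$ once $\ell$ surpasses the admissible bound'' via a mirror of $R'_4$ --- does not work: the mirror of $e_ja_i\sim_R e_j$ ($j\leqslant i$) under $\phi$ is $f_jb_i\sim f_j$ only for $i<j$, so $f_jb_{n-1}\sim f_j$ holds only for $j=n$ (and indeed $f_jb_{n-1}\neq f_j$ in $\mathcal{PIO}_n$ for $j<n$). The paper's resolution goes the other way: one does not decrease $\ell$ but increases $k$, using Lemma~\ref{r6} to insert the extra factors $a_{i-1}\cdots a_{n-\ell+k}$ (all with indices in $[j,i-1]$, so they survive into the canonical form) and then checking that the enlarged $k$ restores the inequality $\ell+j-p-1\leqslant n-i+j+k'-1$. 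Without this insertion step and the accompanying arithmetic, the reduction to $\overline{W}$ is not established, so condition~(2) of Proposition~\ref{ruskuc} remains unproved. The rest of your outline (conditions (1) and (3), the case $(i,j)=(1,n)$ via $f_nb_{n-1}^\ell\sim_{\bar R}f_n$ and $R'_3$, $R'_4$) is fine.
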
 
\begin{proof}
In view of Lemma \ref{satisfies2} and the previous observation, it remains to show the second condition of Proposition \ref{ruskuc} (regarding $\overline{W}$). 

Let $w\in (A\cup B\cup E\cup F)^*$. Then, by Lemma \ref{funda2}, there exist 
$0\leqslant k\leqslant n-1$, $0\leqslant \ell\leqslant n-1$, 
$1\leqslant i_1<\cdots<i_k\leqslant \min\{n-\ell+k-2,n-1\}$  and 
$1\leqslant  i,j\leqslant n$ such that $w\sim_{\bar R} e_i f_j a_{i_k}\cdots a_{i_1}b_{n-1}^\ell$ with $j\leqslant i$ or $(i,j)=(1,n)$. 

We get $e_1a_1\sim_R e_1$ as a particular instance of $R'_4$ and so, by applying the isomorphism $\phi$, 
we obtain $f_nb_{n-1}\sim_\text{b}f_n$,  
from which follows $f_nb_{n-1}^\ell \sim_{\bar R} f_n$. Therefore, 
if $(i,j)=(1,n)$, then 
$$
\begin{array}{rcll}
w &\sim_{\bar R}& e_1f_n a_{i_k} \cdots a_{i_1} b_{n-1}^\ell & \\ 
& \sim_{\bar R} & f_ne_1 a_{i_k} \cdots a_{i_1} b_{n-1}^\ell  & \mbox{(by $R'_3$)}\\ 
&\sim_{\bar R}& f_n e_1 b_{n-1}^\ell & \mbox{(by $R'_4$)}\\ 
&\sim_{\bar R}& e_1 f_n b_{n-1}^\ell & \mbox{(by $R'_3$)}\\ 
&\sim_{\bar R} & e_1f_n \in\overline{W}. & 
\end{array}
$$

Suppose that $1\leqslant j\leqslant i\leqslant n$. With $i_0=0$ and $i_{k+1}=n$, let $0\leqslant p, q\leqslant k$ be such that $j\in [i_p+1,i_{p+1}]$ and $i\in [i_q+1,i_{q+1}]$, i.e. 
$$
1\leqslant i_1<\cdots<i_p<j\leqslant i_{p+1}<\cdots<i_q<i\leqslant i_{q+1}<\cdots<i_k\leqslant n-1. 
$$
Then, $0\leqslant p\leqslant j-1$, $0\leqslant q\leqslant i-1$, $p\leqslant q$ and, as in the proof of Theorem \ref{presPIO-}, 
we have  
\begin{equation}\label{asinpio-} 
e_i f_j a_{i_k}\cdots a_{i_1} \sim_{\bar R} e_i a_{i_q}\cdots a_{i_{p+1}} f_j a_1^p. 
\end{equation}
Hence, 
\begin{eqnarray}\label{byasinpio-}
w &\sim_{\bar R}& e_i a_{i_q} \cdots a_{i_{p+1}} f_j a_1^p b_{n-1}^{\ell} \nonumber \\
  &\sim_{\bar R}& e_i a_{i_q} \cdots a_{i_{p+1}} f_j a_1^{j-1} b_{n-1}^{\ell+j-p-1}  \qquad (\mbox{by Lemma \ref{r4})} \nonumber \\ 
  &\sim_{R_5}& e_if_j a_{i_q} \cdots a_{i_{p+1}} a_1^{j-1} b_{n-1}^{\ell+j-p-1} =w',  
\end{eqnarray}
with $j\leqslant i_{p+1}<\cdots<i_q\leqslant i-1$ and $\ell+j-p-1\geqslant \ell\geqslant 0$. 

Suppose that $q<k$. Then $i\leqslant i_{q+1}<\cdots<i_k$, whence $i_k-i \geqslant i_k-i_{q+1} \geqslant k-(q+1) =k-q-1$, and so 
$-i_k\leqslant q-i-k+1$. On the other hand, since $i_k\leqslant n-\ell+k-2$, we get 
$\ell\leqslant n-i_k+k-2 \leqslant n+(q-i-k+1)+k-2=n+q-i-1$, 
and so 
$
\ell+j-p-1\leqslant n-i+j+(q-p)-2\leqslant n-i+j+(q-p)-1. 
$
Thus, $w'\in\overline{W}$. 

Now, suppose that $q=k$, i.e. $i_k< i\leqslant n$. 
If $\ell\leqslant n-i+k$, then $\ell+j-p-1\leqslant n-i+j+(k-p)-1$ and so $w'\in\overline{W}$. 
Therefore, suppose that $\ell> n-i+k$. 
As we also have $\ell\leqslant n-i_k+k-2 \leqslant n-i_k+k-1$, by Lemma \ref{r6}, we get 
$a_{i-1}\cdots a_{n-\ell+k} a_{i_k}\cdots a_{i_1}b_{n-1}^\ell \sim_{\bar R} a_{i_k}\cdots a_{i_1}b_{n-1}^\ell$. 
Then, 
$$
w\sim_{\bar R} e_i f_j a_{i-1}\cdots a_{n-\ell+k}  a_{i_k}\cdots a_{i_1}b_{n-1}^\ell, 
$$
with $1\leqslant i_1<\cdots <i_k \leqslant n-\ell+k-1< n-\ell+k \leqslant i-1$, 
and so, just like in (\ref{byasinpio-}), we have 
$$
w\sim_{\bar R} e_i f_j  a_{i-1}\cdots a_{n-\ell+k} a_{i_k}\cdots a_{i_{p+1}} a_1^{j-1}b_{n-1}^{\ell+j-p-1} = w'', 
$$
with $j\leqslant i_{p+1} < \cdots < i_k < n-\ell+k <\cdots <i-1 \leqslant i-1$. 
Since the length of the word $a_{i-1}\cdots a_{n-\ell+k} a_{i_k}\cdots a_{i_{p+1}}$ is $i-n+\ell-p$ and 
$\ell+j-p-1 = n-i+j+(i-n+\ell-p)-1$, we get $w''\in\overline{W}$, as required. 
\end{proof} 

\smallskip 

Recall that, for $n\geqslant3$,  $\{a_{1},\dots, a_{n-2},b_{n-1}, e_{n-1}, f_{2}\}$ is a generating set of minimum size of $\mathcal{PIO}_n$. 
Taking into account the equalities 
$a_{n-1}=b_{n-1}a_{1}$, 
$b_{i}=a_{i}b_{n-1}$, $1\leqslant i\leqslant n-2$,  
$e_{i}=b_{n-1}^{n-i-1}e_{n-1}a_{1}^{n-i-1}$, $1\leqslant i\leqslant n-2$, 
and $f_{j}=a_{1}^{j-2}f_{2}b_{n-1}^{j-2}$, $3\leqslant j\leqslant n$, 
by making these substitutions (\textit{Tietze transformations}), into the set of relations $\bar R$, 
we obtain a set $\tilde R$ of  relations on the alphabet $\{a_{1},\dots, a_{n-2},b_{n-1}, e_{n-1}, f_{2}\}$ such that 
$\langle a_{1},\dots, a_{n-2},b_{n-1}, e_{n-1}, f_{2} \mid \tilde R\rangle$ is also a presentation of $\mathcal{PIO}_n$.

\section*{Acknowledgements} 

The first author was supported by Scientific and Technological Research Council of Turkey (\textsc{tubitak}) under the Grant Number 123F463. 
The author thanks to \textsc{tubitak} for their supports. 

The second author was supported by national funds through the FCT-Funda\c c\~ao para a Ci\^encia e a Tecnologia, I.P., 
under the scope of the Center for Mathematics and Applications projects 
UIDB/00297/2020 (https://doi.org/10.54499/UIDB/00297/2020) and \textsc{UIDP}/00297/2020 (https://doi.org/10.54499/UIDP/00297/2020).

\section*{Declarations} 

The authors declare no conflicts of interest.

\bigskip 

{\sf\small 
\noindent{\sc Hayrullah Ay\i k},
\c{C}ukurova University,
Department of Mathematics,
Sar\i \c{c}am, Adana,
Turkey;
e-mail: hayik@cu.edu.tr  

\medskip 

\noindent{\sc V\'\i tor H. Fernandes},
Center for Mathematics and Applications (NOVA Math)
and Department of Mathematics,
Faculdade de Ci\^encias e Tecnologia,
Universidade Nova de Lisboa,
Monte da Caparica,
2829-516 Caparica,
Portugal;
e-mail: vhf@fct.unl.pt.

\medskip 

\noindent{\sc Emrah Korkmaz}, 
\c{C}ukurova University,
Department of Mathematics,
Sar\i \c{c}am, Adana,
Turkey;
e-mail: ekorkmaz@cu.edu.tr 
}

\end{document}